\newcommand{\ignore}[1]{}
\newcommand{\cum}{\operatorname{cum}}
\newcommand{\bb}{\mathbb}
\newcommand{\C}{\bb C} 
\newcommand{\Z}{\bb Z}
\newcommand{\R}{\bb R}
\newcommand{\N}{\bb N}
\newcommand{\vol}{\operatorname{vol}}
\newtheorem{Theorem}{Theorem}
\newtheorem{Cor}[Theorem]{Corollary}
\newtheorem{Prop}[Theorem]{Proposition}
\newtheorem{theo}[Theorem]{Theorem$^*$}
\newtheorem{Lemma}[Theorem]{Lemma}
\newtheorem*{lemma*}{Lemma}
\newtheorem*{convention*}{Convention}
\newtheorem*{theorem*}{Theorem}
\numberwithin{equation}{section}
\numberwithin{Theorem}{section}
\begin{document}
\title[Higher-order correlations]{Higher-order correlations for group actions}
\author{Alexander Gorodnik} 


\begin{abstract}
This survey paper discusses behaviour
of higher-order correlations for one-parameter dynamical systems 
and more generally for dynamical systems arising from group actions.
In particular, we present a self-contained proof of 
quantitative bounds for higher-order correlations of actions
of simple Lie groups. We also outline several applications
of our analysis of correlations that include asymptotic formulas for counting
lattice points, existence of approximate configurations in lattice subgroups, 
and validity of the Central Limit Theorem for multi-parameter group actions.
\end{abstract}

\maketitle

\setcounter{tocdepth}{1}
{\small \tableofcontents}
 
\section{Introduction}\label{sec:intro}

In this survey, we will be interested in gaining an insight into
asymptotic properties of chaotic group actions. There are several quite distinct points of view
on how this problem may be studied. Our approach here
is based on the analysis of higher-order correlations which characterise 
random-like behaviour of observables computed along orbits. For instance, let us consider a measure-preserving 
transformation $T:X\to X$ of a probability space $(X,\mu)$. Then 
for functions $\phi_1,\ldots,\phi_r\in L^\infty(X)$,
the {\it correlations of order r} are defined as 
\begin{equation}
\label{eq:corr0}
\int_X \phi_1(T^{n_1}x)\cdots \phi_r(T^{n_r}x)\, d\mu(x),\quad 
n_1,\ldots, n_r \in \mathbb{N}.
\end{equation}
The transformation $T$ is called {\it mixing of order $r$} if
for all $\phi_1,\ldots,\phi_r\in L^\infty(X)$, 
\begin{equation}
\label{eq:mmix0}
\int_X \phi_1(T^{n_1}x)\cdots \phi_r(T^{n_r}x)\, d\mu(x)\longrightarrow 
\left(\int_X\phi_1\,d\mu\right)\cdots \left(\int_X\phi_r\,d\mu\right)
\end{equation}
as $|n_i-n_j|\to \infty$ for all $i\ne j$. 
The multiple mixing property, in particular, implies that the
family of functions $\{\phi\circ T^n\}$ 
is quasi-independent asymptotically.
The study of this property was initiated by Rokhlin \cite{roh}
who showed that ergodic endomorphisms of compact abelian groups
are mixing of all orders.
In this work, Rokhlin also raised the question,
which still remains open,
whether mixing of order two implies mixing of all orders
for general measure-preserving transformations.
Kalikow \cite{kal} established this for rank-one transformations, 
Ryzhikov \cite{ryz} --- for transformations of finite rank,
and Host \cite{host} --- for transformations with singular spectrum.

The multiple mixing property has been extensively studied for 
flows on homogeneous spaces. Ornstein and Weiss \cite{ow} established that
the geodesic flow on compact hyperbolic surfaces is Bernoulli,
which implies that it is mixing of all orders.
Dani \cite{dani1,dani2} proved that a quite general partially hyperbolic one-parameter homogeneous flow satisfies the Kolmogorov property so that, in particular, it is mixing of all orders.
Sinai \cite{sinai} conjectured that the horocycle flow is also mixing of all orders. 
This conjecture was proved by Marcus \cite{marcus}.
In fact, Marcus' work established mixing of all order for general flows
on homogeneous spaces of semisimple groups. 
Ultimately Starkov \cite{starkov}, building on the work of Mozes \cite{mozes}
and the theory of unipotent flows,
proved mixing of all orders for general mixing one-parameter flows on
finite-volume homogeneous spaces.

Quantitative estimates on higher-order correlations \eqref{eq:corr0}
are also of great importance. Using Fourier-analytic techniques,
Lind \cite{lind}   proved exponential convergence of correlations 
of order two for ergodic toral automorphisms,
and P\`ene \cite{pe} proved this for correlations of all orders.
Dolgopyat \cite{dolg} established a general result about exponential convergence
of correlations for partially hyperbolic dynamical systems 
under the assumption of quantitative equidistribution of translates
of unstable manifolds. Gorodnik and Spatzier \cite{GS0}
showed exponential convergence of correlations of all orders for ergodic
automorphisms of nilmanifolds.

\medskip

More generally, we consider a measure-preserving action of a locally compact
group $G$ on a probability space space $(X,\mu)$.
For functions $\phi_1,\ldots,\phi_r\in L^\infty(X)$,
we define the {\it correlations of order $r$} as 
\begin{equation}
\label{eq:corr1}
\int_X \phi_1(g_1^{-1}x)\cdots \phi_r(g_r^{-1}x)\, d\mu(x),\quad 
g_1,\ldots, g_r \in G.
\end{equation}
We assume that the group $G$ is equipped with a (proper) metric $d$.
We say that the action is {\it mixing of order $r$} if 
for functions $\phi_1,\ldots,\phi_r\in L^\infty(X)$,
\begin{equation}
\label{eq:mmix}
\int_X \phi_1(g_1^{-1}x)\cdots \phi_r(g_r^{-1}x)\, d\mu(x)
\longrightarrow \left(\int_X\phi_1\,d\mu\right)\cdots \left(\int_X\phi_r\,d\mu\right)
\end{equation}
as $d(g_i,g_j)\to \infty$ for all $i\ne j$.
At present, the available results about higher-order mixing for
multi-parameter actions are limited to several particular classes of dynamical systems.
del Junco and Yassawi \cite{jy1,jy2} proved 
that for finite rank actions of countable abelian groups
satisfying additional technical conditions, mixing of order two implies mixing of all orders.
It was discovered by Ledrappier \cite{led} that mixing of order two
does not imply mixing of order three in general for $\mathbb{Z}^2$-actions.
The example constructed in \cite{led} is an action
by automorphisms on a (disconnected) compact abelian group.
On the other hand,
Schmidt and Ward \cite{SW} established that 
$\Z^k$-actions  by automorphisms on compact connected abelian groups
that are  mixing of order two are always mixing of all orders.
We refer to the monograph of Schmidt \cite{sch} for extensive 
study of mixing properties for higher-rank abelian actions 
by automorphisms of compact abelian groups.
It turns out that this problem is intimately connected
to deep number-theoretic questions that involve analysing solutions of $S$-unit equations.
Gorodnik and Spatzier \cite{GS} proved that mixing
$\Z^k$-actions by automorphisms on nilmanifolds are mixing of all orders.
Using Diophantine estimates on logarithms of algebraic numbers,
the work \cite{GS} also established quantitative estimates for correlations of order up to three. The problem of producing explicit quantitative bounds on general higher-order
correlations in this setting is still open, even for $\Z^k$-actions by toral automorphisms.

\medskip

In this paper, we provide a self-contained accessible treatment of the analysis 
of higher-order correlations for measure-preserving
actions of a (noncompact) simple connected Lie group $G$
with finite centre (a less advanced reader may think about the groups like $\hbox{SL}_d(\R)$ or $\hbox{Sp}_{2n}(\R)$).
This topic has long history going back at least to the works of Harish-Chandra (see, for instance, \cite{hc}). 
Indeed, the correlations of order two can be interpreted as matrix coefficients
of the corresponding unitary representation on the space $L^2(X)$,
and quantitative estimates on matrix coefficients have been established
in the framework of the Representation Theory
(see \S\ref{sec:decay} and \S\ref{sec:cor_2} for an extensive discussion).
 This in particular leads to a surprising corollary that every ergodic
 action of $G$ is always mixing. Moreover, Mozes \cite{mozes}
 proved that every mixing action of $G$ is always mixing of all order,
 and Konstantoulas \cite{konst} and Bj\"orklund, Einsiedler, Gorodnik \cite{beg}
 established quantitative estimates on higher-order correlations.
The main goal of these notes is to outline  a proof of  the following result.
Let $L$ be a connected Lie group and $\Gamma$ a discrete subgroup of $L$ with finite
 covolume. We denote by $(X,\mu)$ the space $L/\Gamma$ equipped with the 
 invariant probability measure $\mu$. 
Let $G$ be a (noncompact) simple connected higher-rank Lie group with finite center
equipped with a left-invaraint Riemannian metric $d$.
We consider the measure-preserving action of $G$ on $(X,\mu)$ given
by a smooth representation $G\to L$.
In this setting, we establish quantitative estimates on higher-order correlations:

\begin{theorem*} 
Assuming that the action of $G$ on $(X,\mu)$ is ergodic,
there exists $\delta_r>0$ (depending only on $G$ and $r$) such that 
for all elements $g_1,\ldots,g_r\in G$ and all
functions $\phi_1,\ldots,\phi_r\in C_c^\infty(X)$,
\begin{align}
\int_X \phi_1(g_1^{-1}x)\cdots \phi_r(g_r^{-1}x)\, d\mu(x)\label{eq:main}
=\,& \left(\int_X\phi_1\,d\mu\right)\cdots \left(\int_X\phi_r\,d\mu\right)\\
&\quad+O_{\phi_1,\ldots,\phi_r,r}\left( e^{-\delta_r D(g_1,\ldots,g_r)}\right),\nonumber
\end{align}
where 
$$
D(g_1,\ldots,g_r)=\min_{i\ne j} d(g_i,g_j).
$$
\end{theorem*}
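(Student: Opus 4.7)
The argument proceeds by induction on $r$, with the central device being to pair off the two closest group elements. The base case $r=2$ is the quantitative decay of matrix coefficients of the unitary representation of $G$ on $L^2(X)$. For a higher-rank simple $G$ acting ergodically, the Harish-Chandra--Howe--Moore--Cowling theory (surveyed in $\S\ref{sec:decay}$ and $\S\ref{sec:cor_2}$) supplies a fixed Sobolev norm $\mathcal{S}$ on $C_c^\infty(X)$, of some order $k$ depending only on $G$, together with constants $C, \delta_2 > 0$, such that
\begin{equation*}
\left| \int_X \phi(x)\, \psi(g^{-1} x) \, d\mu(x) - \left(\int_X \phi\, d\mu\right)\left(\int_X \psi\, d\mu\right) \right| \le C\, \mathcal{S}(\phi)\, \mathcal{S}(\psi)\, e^{-\delta_2\, d(e, g)}
\end{equation*}
for all $g \in G$ and $\phi, \psi \in C_c^\infty(X)$.

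\textbf{Inductive step.} Assume the bound at order $r - 1$. Given $g_1, \ldots, g_r \in G$ with $D := D(g_1, \ldots, g_r)$ realised by some pair, use the $G$-invariance of $\mu$ to translate so that $g_1 = e$ and $d(e, g_2) = D$. Write $h = g_2$ and introduce
\begin{equation*}
\Phi(x) := \phi_1(x)\, \phi_2(h^{-1} x).
\end{equation*}
The shorter tuple $(e, g_3, \ldots, g_r)$ still satisfies $D(e, g_3, \ldots, g_r) \ge D$, since every pairwise distance among its entries also appeared in the original minimum. Applying the inductive hypothesis to this tuple with the functions $\Phi, \phi_3, \ldots, \phi_r$, and the base case to bound $\int_X \Phi\, d\mu - \left(\int_X\phi_1\,d\mu\right)\left(\int_X\phi_2\,d\mu\right)$, and expanding, yields the claimed main term $\prod_{i=1}^r \int_X \phi_i\, d\mu$ together with an error of size
\begin{equation*}
C_r\, \mathcal{S}(\Phi) \prod_{i=3}^r \mathcal{S}(\phi_i)\, e^{-\min(\delta_2,\,\delta_{r-1}) D}.
\end{equation*}

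\textbf{Main obstacle.} The critical quantity is $\mathcal{S}(\Phi)$, which depends on $h$ through the translate $\pi(h) \phi_2$, where $\pi$ is the $L^2(X)$-representation. The covariance $d\pi(Y) \pi(h) = \pi(h)\, d\pi(\mathrm{Ad}(h^{-1}) Y)$ combined with Leibniz's rule gives
\begin{equation*}
\mathcal{S}(\Phi) \le C_k\, \mathcal{S}(\phi_1)\, \mathcal{S}(\phi_2)\, \|\mathrm{Ad}(h^{-1})\|^{k},
\end{equation*}
and since $\|\mathrm{Ad}(h^{-1})\| \le e^{c\, d(e,h)} = e^{cD}$ for a constant $c = c(G)$, the total error is bounded by $e^{-(\delta_{r-1} - c k) D}$. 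The induction therefore loses a fixed amount $c k$ from the exponential rate at each step; one sets $\delta_r$ to be any positive number strictly smaller than $\delta_{r-1} - c k$, and iterating $r - 2$ times from $\delta_2$ produces a $\delta_r > 0$ depending only on $G$ and $r$. Verifying that the Sobolev order $k$ can be kept uniform along the induction and that this controlled loss suffices is the main technical point; the overall scheme follows the approach of Mozes \cite{mozes} and the quantitative refinements of Konstantoulas \cite{konst} and Bj\"orklund--Einsiedler--Gorodnik \cite{beg}.
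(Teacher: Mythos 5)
You have the right base case (Theorem \ref{th:high-rank2}, \ref{th:high-rank3}), and you correctly identify that the crux of any induction is controlling the Sobolev norm $\mathcal{S}(\Phi)$ of the product $\Phi = \phi_1\cdot(\pi(h)\phi_2)$. But the way you then propose to absorb that growth contains a fatal gap which you half-flag in your last sentence but do not resolve, and in fact cannot be resolved within this scheme.

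As you observe, translating $\phi_2$ by $h$ with $d(e,h)=D$ inflates $\mathcal{S}_k(\pi(h)\phi_2)$ by a factor $\sim \|\mathrm{Ad}(h^{-1})\|^k \sim e^{ckD}$. The inductive hypothesis at level $r-1$ then contributes $\mathcal{S}(\Phi)\,e^{-\delta_{r-1} D'} \lesssim e^{(ck - \delta_{r-1})D}$ with $D' = D(e,g_3,\ldots,g_r)\ge D$. So you need $\delta_{r-1} > ck$ for the step to produce decay, and passing from $\delta_{r-1}$ to $\delta_r$ loses the \emph{additive} amount $ck$. Since $\delta_2 > 0$ and $ck > 0$ are fixed constants of $G$, after $r-2$ steps you are left with $\delta_r \lesssim \delta_2 - (r-2)ck$, which is negative once $r > 2 + \delta_2/(ck)$. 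The controlled loss therefore does \emph{not} suffice: your claim that iterating "produces a $\delta_r > 0$ depending only on $G$ and $r$" fails for all sufficiently large $r$. Keeping the Sobolev order $k$ uniform, which you worry about, is not the issue; the problem is that a fixed additive penalty applied $r-2$ times exhausts a fixed budget.

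The paper's proof (\S\ref{sec:cor_gen}, Theorem \ref{th:cor_high}) is built specifically to avoid this exponential distortion. Rather than translating a test function by a general group element, the argument regards the correlation as a measure $\eta$ on $X^r$ invariant under the diagonal $D=\{(g_1^{-1}hg_1,\ldots,g_r^{-1}hg_r)\}$, and averages over a one-parameter subgroup $h(t)=(\exp(tZ_1),\ldots,\exp(tZ_r))$, where the $Z_j$ are rescaled $\mathrm{Ad}(g_j^{-1}g_s)$-images of a single \emph{nilpotent} $Z$ chosen to realise the maximal operator norm $Q$. Because each $Z_j$ is nilpotent, $\mathrm{Ad}(\exp(tZ_j))$ is polynomial in $t$, and thus the Sobolev norms of the translated functions grow only like $\max(1,|t|)^a$ rather than exponentially --- this is property (b) in the paper and is what breaks the compounding loss. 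The indices are then split $\{1,\ldots,r\}=I\sqcup J$ at a gap in the sorted weights $w_1\ge\cdots\ge w_r$ found by the pigeonhole principle, so that $h(t)$ moves the $I$-coordinates quickly (giving decorrelation via property (c)) while barely moving the $J$-coordinates (property (a)), and the averaging time $T$ is tuned against the three error terms (I), (II), (III). The resulting $\delta_r$ comes from balancing polynomial-rate competing terms rather than subtracting a fixed exponential penalty, which is what makes the induction close for every $r$. Your proposal would need a comparably strong idea to replace the unipotent averaging, and as written it does not have one.
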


As we shall explain below, a version of this theorem also holds for rank-one groups $G$
provided that the action of $G$ on $L^2(X)$ satisfies the necessary condition of having
the spectral gap. In this case, the exponent $\delta_r$ also depends on the action.

\medskip

It turns out that analysis of correlations has several far-reaching applications,
and here we outline how to use this approach 
\begin{itemize}
\item to establish an asymptotic formula for the number of lattice points,
\item to show existence of approximate configurations in lattice subgroups,
\item to prove the Central Limit Theorem for group actions.
\end{itemize}
Other interesting applications of quantitative bounds on correlations, which we do not discuss
 here, involve the Kazhdan property (T) \cite[\S V.4.1]{ht},
 the cohomological equation \cite{KS,GS0,GS}, the global rigidity of actions \cite{fks},
 and analysis of the distribution of arithmetic counting functions \cite{bg1,bg2}.

\medskip

This paper is based on a series of lectures given at the Tata Institue of Fundamental Research
which involved participants with quite diverse backgrounds ranging from starting PhD
students to senior researchers. When I was choosing the material, I was aiming 
to make it accessible, but at the same time to give a reasonably detailed
exposition of the developed methods as well as to survey current state of the art in the field.
This inevitably required some compromises. In particular, we assumed very little knowledge 
of the Theory of Lie groups, and some of the arguments are carried out only in the 
case when $G=\hbox{SL}_d(\R)$. I hope that a less prepared reader should be able to follow
this paper by thinking that a ``connected simple Lie group with finite center'' is $\hbox{SL}_d(\R)$,
and advanced readers might be able to infer  from 
our exposition how to deal with the general case.
Besides giving a self-contained proof of the bound \eqref{eq:main},
we also state a number of  more advanced results without proofs, which are indicated by 
the symbol $(^*)$.

\subsection*{Organisation of the paper}

We are not aware of any direct way for proving the main bound \eqref{eq:main},
and our arguments proceeds in several distinct steps. 
First, in \S\ref{sec:decay} and \S\ref{sec:cor_2},
we study the behaviour of correlations of order two using representation-theoretic techniques.
In \S\ref{sec:decay} we show that the correlations of order two decay at infinity
(see Theorem \ref{th:hm}), and in \S\ref{sec:cor_2} we establish quantitative bounds
on the correlations of order two (see Theorem \ref{th:high-rank2}).
Then the main bound \eqref{eq:main} is established using an elaborate inductive argument in \S\ref{sec:cor_gen}
(see Theorem \ref{th:cor_high}).
We also discuss several application of the established bounds for correlations:
in \S\ref{sec:counting} we derive an asymptotic formula for the number of lattice points,
in \S\ref{sec:conf} we establish existence of approximate configurations,
and in \S\ref{sec:clt} we prove the  Central Limit Theorem for group actions.

{\small
$$
\xymatrixcolsep{1.4pc} \xymatrixrowsep{1.4pc}
\xymatrix{
	\fbox{\begin{tabular}{c} \hbox{\bf Mixing}\\ \hbox{(\S \ref{sec:decay})}\end{tabular}} \ar[d] \ar[r] & \fbox{\begin{tabular}{c} \hbox{\bf Counting lattice points}\\ \hbox{(\S \ref{sec:counting})}\end{tabular}}\\
	\fbox{\begin{tabular}{c} \hbox{\bf Quantitative mixing}\\ \hbox{(\S \ref{sec:cor_2})}\end{tabular}} \ar[d] \ar@{-->}[ru] &  \fbox{\begin{tabular}{c} \hbox{\bf Configurations}\\ \hbox{(\S \ref{sec:conf})}\end{tabular}}\\
	\fbox{\begin{tabular}{c} \hbox{\bf Higher-order quantitative mixing}\\ \hbox{(\S \ref{sec:cor_gen})}\end{tabular}} \ar[ru]  \ar[r] & \fbox{\begin{tabular}{c} \hbox{\bf Central limit theorem}\\ \hbox{(\S \ref{sec:clt})}\end{tabular}} 
	}
$$
}

\subsection*{Acknowledgement}
This survey paper has grown out of the lecture series given by the author
at the Tata Institute of Fundamental Research in Spring 2017.
I would like to express my deepest gratitude to 
the Tata Institute for the hospitality
and to the organisers of this programme
-- Shrikrishna Dani and Anish Ghosh -- for all their hard work on setting up this event
and making it run smoothly.

\section{Decay of matrix coefficients}\label{sec:decay}

Let $G$ be a (noncompact) connected simple Lie group with finite center
(e.g., $G=\hbox{SL}_d(\R)$). We consider a measure-preserving action 
$G$ on a standard probability space $(X,\mu)$.
The goal of this section is to show 
a surprising result that ergodicity of any such action implies that it is mixing:

\begin{Theorem}\label{th:hm1}
Let $G$ be a (noncompact) connected simple Lie group with finite centre
and $G\times X\to X$ a measurable measure-preserving
action on a standard probability space $(X,\mu)$.
We assume that the action of $G$ on $(X,\mu)$ is ergodic (that is, the space $L^2(X)$ has no nonconstant $G$-invariant functions). 
Then for all $\phi,\psi\in L^2(X)$,
$$
\int_X \phi(g^{-1}x)\psi(x)\, d\mu(x)\longrightarrow \left(\int_X \phi\, d\mu\right) 
\left(\int_X \psi\, d\mu\right)
$$
as $g\to \infty$ in $G$.
\end{Theorem}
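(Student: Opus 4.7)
The plan is to recast the statement as a vanishing result for matrix coefficients and then invoke the Howe--Moore phenomenon. The action induces a strongly continuous unitary representation $\pi\colon G\to U(L^2(X))$ via $(\pi(g)f)(x)=f(g^{-1}x)$, so the correlation integral is just the matrix coefficient $\langle \pi(g)\phi,\psi\rangle$. Write $\phi=\phi_0+\int_X\phi\,d\mu$ and $\psi=\psi_0+\int_X\psi\,d\mu$ with $\phi_0,\psi_0\in L^2_0(X):=\{f\in L^2(X):\int f\,d\mu=0\}$; the constant--mean contributions already produce the claimed product, so it suffices to show that $\langle \pi(g)\phi_0,\psi_0\rangle\to 0$ as $g\to\infty$. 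The subspace $L^2_0(X)$ is closed and $G$-invariant, and by the ergodicity hypothesis the restriction $\pi_0:=\pi|_{L^2_0(X)}$ has no nonzero $G$-invariant vector.

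Next I would reduce to divergent sequences in a fixed positive Weyl chamber via the Cartan decomposition $G=KAK$. Writing $g_n=k_n a_n k'_n$ with $a_n\in \overline{A^+}$ and passing to a subsequence along which $k_n\to k$, $k'_n\to k'$, strong continuity of $\pi$ yields
\[
\langle \pi(g_n)\phi_0,\psi_0\rangle=\langle \pi(a_n)\pi(k'_n)\phi_0,\pi(k_n^{-1})\psi_0\rangle=\langle \pi(a_n)\phi',\psi'\rangle+o(1),
\]
for $\phi':=\pi(k')\phi_0$ and $\psi':=\pi(k^{-1})\psi_0$ in $L^2_0(X)$. Thus the problem is reduced to proving that $\langle \pi_0(a_n)\phi',\psi'\rangle\to 0$ for any sequence $a_n\in A^+$ with $a_n\to\infty$.

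The heart of the argument is the Mautner phenomenon. Let $\phi_\infty$ be a weak limit point of $\{\pi_0(a_n)\phi'\}$ in $L^2_0(X)$, extracted along a subsequence. For the expanding horospherical subgroup $U^+=\{u\in G:a_n^{-1}ua_n\to e\}$, and any $u\in U^+$, $\eta\in L^2_0(X)$, the identity $\pi(u)\pi(a_n)=\pi(a_n)\pi(a_n^{-1}ua_n)$ together with strong continuity gives
\[
\langle \pi(u)\pi_0(a_n)\phi',\eta\rangle=\langle \pi_0(a_n)\phi',\eta\rangle+\langle \pi_0(a_n)\bigl(\pi(a_n^{-1}ua_n)-I\bigr)\phi',\eta\rangle,
\]
and the second term is $o(1)$ because $a_n^{-1}ua_n\to e$. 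Passing to the limit, $\pi(u)\phi_\infty=\phi_\infty$; hence $\phi_\infty$ is fixed by all of $U^+$. An analogous argument applied to $\pi_0(a_n^{-1})\psi'$ produces a weak limit $\psi_\infty$ that is invariant under the opposite horospherical subgroup $U^-$. Since $G$ is noncompact, connected, simple with finite centre, the closed subgroup generated by $U^+$ and $U^-$ is normal and nontrivial, hence equal to $G$ by simplicity; in particular, $U^+$-invariance already forces $G$-invariance (one checks this explicitly for $G=\mathrm{SL}_d(\R)$ using elementary unipotents). Consequently $\phi_\infty\in L^2_0(X)^{G}=\{0\}$, so $\phi_\infty=0$, and therefore $\langle \pi_0(a_n)\phi',\psi'\rangle\to 0$ along the subsequence. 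Because any weak limit point is zero, the full sequence converges to zero.

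The main obstacle is the final structural step: turning invariance under a single horospherical subgroup into full $G$-invariance. For general simple $G$ this requires knowing that $G$ is generated by two opposite parabolic (equivalently horospherical) subgroups, which in turn rests on the root-space decomposition and the Bruhat decomposition; at the level of this survey it is natural to verify this by hand for $G=\mathrm{SL}_d(\R)$ and remark that the same argument carries over verbatim once the relevant root-theoretic facts are in hand. A minor technical nuisance is coordinating the two independently extracted weak limits of $\pi(a_n)\phi'$ and $\pi(a_n^{-1})\psi'$, but the polarisation identity $\langle \pi(a_n)\phi',\psi'\rangle=\langle \phi',\pi(a_n^{-1})\psi'\rangle$ lets one use either limit to conclude.
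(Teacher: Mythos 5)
Your overall strategy—Cartan decomposition, passing to a divergent sequence in $A^+$, extracting a weak limit $\phi_\infty$ of $\pi(a_n)\phi'$, and showing $\phi_\infty$ is invariant under the expanding horospherical subgroup $U^+$—is precisely the Howe--Moore skeleton that the paper follows. Everything up to ``$\phi_\infty$ is fixed by all of $U^+$'' is sound.

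The genuine gap is the step immediately after. You write that the closed subgroup generated by $U^+$ and $U^-$ equals $G$, and then conclude ``in particular, $U^+$-invariance already forces $G$-invariance.'' This is a non sequitur. The vector $\phi_\infty$ is known only to be $U^+$-invariant; the separately extracted weak limit $\psi_\infty$ is known only to be $U^-$-invariant. The fact that $\langle U^+,U^-\rangle=G$ would let you conclude $G$-invariance only for a \emph{single} vector that is simultaneously invariant under both subgroups, which is exactly what you do not have. The polarisation identity $\langle\pi(a_n)\phi',\psi'\rangle=\langle\phi',\pi(a_n^{-1})\psi'\rangle$ lets you pass the problem from $\phi_\infty$ to $\psi_\infty$, but for either vector you still need the Mautner phenomenon: invariance under a one-sided noncompact subgroup (here $U^+$ or $U^-$) forces full $G$-invariance. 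That implication is the heart of the theorem, not a remark, and you have effectively assumed it.

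The paper closes this gap in two stages. In Step~3, for $G=\hbox{SL}_2(\R)$, it proves the Mautner phenomenon by hand: starting from $U$-invariance of $v''$, the function $F(g)=\langle\pi(g)v'',v''\rangle$ descends to $G/U\cong\R^2\setminus\{0\}$, is constant on the $U$-orbits (horizontal lines), hence by continuity constant on the $x$-axis; the Cauchy--Schwarz equality case then yields $A$-invariance, a second continuity argument yields constancy of $F$ on all of $\R^2\setminus\{0\}$, and Cauchy--Schwarz again yields full $G$-invariance. In Step~4, for $\hbox{SL}_d(\R)$, after passing to a subsequence along which some simple root $\alpha_i(a^{(n)})\to\infty$, the vector $v''$ is shown to be invariant under the corresponding block-unipotent $U_i$; each one-parameter root subgroup $U_{lk}\subset U_i$ sits in an $\hbox{SL}_2(\R)$-copy $G_{lk}$, Step~3 upgrades $U_{lk}$-invariance to $G_{lk}$-invariance, and the $G_{lk}$'s generate $G$. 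If you want to complete your argument, you need to supply this (or an equivalent) proof of the one-sided Mautner phenomenon rather than gesture at the group generated by $U^+$ and $U^-$.
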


We observe that a measure-preserving action as above defines a unitary representation $\pi$ of $G$
on the space $\mathcal{H}=L^2(X)$ given by
\begin{equation}
\label{eq:rep}
\pi(g)\phi(x)=\phi(g^{-1}x)\quad \hbox{for $g\in G$ and $x\in X.$}
\end{equation}
One can also check (see, for instance, \cite[A.6]{bhv}) that 
this representation is strongly continuous (that is, the map $g\mapsto \pi(g)\phi$, $g\in G$,
is continuous).

\begin{convention*}
{\rm 	
Throughout these notes, we always implicitly assume that
representations are strongly continuous and Hilbert spaces are separable.
}
\end{convention*}

Theorem \ref{th:hm1} can be formulated more abstractly in terms of 
asymptotic vanishing of matrix coefficients of unitary representations.

\begin{Theorem}\label{th:hm2}
	Let $G$ be a (noncompact) connected simple Lie group with finite center
	and 	$\pi$ a unitary representation of $G$ on a Hilbert space $\mathcal{H}$.
	Then for all $v,w\in \mathcal{H}$,
	$$
	\left<\pi(g)v,w\right>\to \left<P_Gv,P_G w\right>\quad \hbox{as $g\to \infty$ in $G$, }
	$$
	where $P_G$ denotes the orthogonal projection on the subspace of the $G$-invariant vectors.
\end{Theorem}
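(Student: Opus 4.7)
\emph{Proof plan.} The strategy is the classical Howe--Moore argument. Since $\mathcal{H} = \mathcal{H}^G \oplus (\mathcal{H}^G)^\perp$ is a $G$-stable orthogonal decomposition and the conclusion is immediate on $\mathcal{H}^G$, the problem reduces to showing: for every $v \in (\mathcal{H}^G)^\perp$ and every sequence $g_n \to \infty$ in $G$, one has $\pi(g_n)v \to 0$ weakly. First I would invoke the Cartan $KA^+K$ decomposition, writing $g_n = k_n a_n k_n'$ with $k_n, k_n' \in K$ (a fixed maximal compact subgroup) and $a_n$ in the closed positive Weyl chamber $A^+$ of a maximal $\mathbb{R}$-split torus. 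Compactness of $K$ lets us pass to a subsequence with $k_n \to k$ and $k_n' \to k'$, while $g_n \to \infty$ forces $a_n \to \infty$. Strong continuity of $\pi$ on $K$ then reduces matters to proving $\langle \pi(a_n)v, w\rangle \to 0$ for all $w \in \mathcal{H}$, $v \perp \mathcal{H}^G$, and $a_n \to \infty$ in $A^+$.

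Next I would combine weak-$\ast$ compactness of the unit ball in $\mathcal{H}$ with the Mautner phenomenon. After passing to a further subsequence, assume $\pi(a_n) v \to v'$ weakly; the goal becomes $v' = 0$. Let $U := \{u \in G : a_n^{-1} u a_n \to e\}$. Since $a_n$ escapes to infinity inside the positive Weyl chamber, $U$ contains the unipotent subgroup generated by the positive root spaces. For $u \in U$, the identity $u a_n = a_n\cdot(a_n^{-1} u a_n)$, unitarity of $\pi(a_n)$, and strong continuity of $\pi$ at $e$ give
$$\|\pi(u a_n)v - \pi(a_n)v\| = \|\pi(a_n^{-1} u a_n) v - v\| \to 0,$$
so $\pi(u a_n)v$ and $\pi(a_n)v$ share the same weak limit, whence $\pi(u) v' = v'$. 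This identifies $v'$ as a $U$-invariant vector.

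The principal obstacle is then to upgrade $U$-invariance of $v'$ to full $G$-invariance, for this would contradict $v' \in (\mathcal{H}^G)^\perp$ unless $v' = 0$. For $G = \SL_2(\mathbb{R})$, the symmetric argument applied to $\pi(a_n^{-1}) w$ produces a vector $w'$ fixed by the opposite unipotent $U^-$, and an iterated Mautner-type bootstrap together with the classification of closed subgroups of $\SL_2(\mathbb{R})$ containing a non-trivial unipotent rules out any nonzero such $v'$ in a representation with no $G$-invariants—this is precisely where the simplicity and noncompactness of $G$ enter, excluding the intermediate stabiliser possibilities (namely $U^+$ itself and the Borel subgroup). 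For general simple $G$, I would reduce to this rank-one case by considering the $\SL_2(\mathbb{R})$-triple attached to each simple root: $v'$ is in particular invariant under the positive unipotent of each such $\SL_2(\mathbb{R})$, hence by rank-one Howe--Moore under the whole $\SL_2(\mathbb{R})$, and since these subgroups generate the connected simple Lie group $G$ we conclude $v' \in \mathcal{H}^G$. This final passage from invariance under a maximal unipotent to full $G$-invariance is the delicate technical step.
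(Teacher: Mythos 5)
Your plan follows the same Howe--Moore scheme the paper uses: Cartan decomposition $g_n=k_na_nk_n'$, passage to a weak limit along $A^+$, and the contraction identity $a_n^{-1}ua_n\to e$ to extract a vector $v'$ invariant under a unipotent subgroup. However, the plan leaves a genuine gap at exactly the point you flag as ``the delicate technical step,'' and there is also a smaller slip in the higher-rank reduction.

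For $\SL_2(\mathbb{R})$, appealing to ``the classification of closed subgroups of $\SL_2(\mathbb{R})$ containing a non-trivial unipotent'' does not by itself rule out the stabiliser of $v'$ being exactly $U^+$ or the Borel $AU^+$: these are perfectly legitimate closed subgroups, and their existence is entirely compatible with $G$ being simple and noncompact. The content of the Mautner phenomenon is precisely to exclude these possibilities, and your sketch never supplies an argument for that. Moreover, the $U^-$-invariant vector $w'$ you obtain by running the contraction argument on $\pi(a_n^{-1})w$ is a \emph{different} vector from $v'$; having a $U^+$-invariant $v'$ and a $U^-$-invariant $w'$ does not produce a vector fixed by both, so the group they jointly generate is irrelevant. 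The paper's Step 3 fills the gap concretely: setting $F(g)=\langle\pi(g)v'',v''\rangle$, which is $U$-bi-invariant and hence descends to a continuous function on $G/U\cong\mathbb{R}^2\setminus\{0\}$ constant on $U$-orbits (the horizontal lines $y=c$); by continuity $F$ is constant on the $x$-axis, and $F(a(t)e_1)=\|v''\|^2$ forces equality in Cauchy--Schwarz, so $v''$ is $A$-invariant; iterating with the $AU$-orbits (the two open half-planes) shows $F$ is identically $\|v''\|^2$, whence $v''$ is $G$-invariant.

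For general $G$, you assert that $v'$ is invariant under the positive unipotent of the $\SL_2(\mathbb{R})$ attached to \emph{each} simple root. But $a_n\to\infty$ in $A^+$ only guarantees $\alpha_i(a_n)\to\infty$ for \emph{some} simple root $\alpha_i$; the contraction argument then fixes $v'$ only under the root subgroups contracted by $a_n^{-1}$. The paper sidesteps this by taking the full unipotent radical $U_i$ of the maximal parabolic attached to the one expanding root: for $G=\SL_d(\mathbb{R})$ it shows $v''$ is fixed by every root subgroup $U_{lk}$ with $l\le i<k$, hence (by the $\SL_2$ case) by each $G_{lk}\cong\SL_2(\mathbb{R})$, and these $G_{lk}$ together generate $\SL_d(\mathbb{R})$.
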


\medskip

The study of matrix coefficients for unitary representations of semisimple Lie
groups has a long history. In particular, this subject played important role 
in the research programme of Harish-Chandra. We refer to the monographs
\cite{war1,war2,gv,knapp} for expositions of this theory.
Explicit quantitative bounds on the matrix coefficient,
which in particular imply Theorem \ref{th:hm2},
 were derived
in the works of Borel and Wallach \cite{BW}, Cowling \cite{cowling}, and 
Casselman and Milici\'c \cite{cm}. This initial approach to study of asymptotic
properties of matrix coefficients used elaborate analytic arguments that 
involved representing them as solutions of certain systems of PDE's.
Subsequently, Howe and Moore \cite{HM} developed a different approach 
to prove Theorem \ref{th:hm2} that used 
the Mautner phenomenon (cf. Theorem \ref{th:mautner} below)
and an inductive argument that derived vanishing  of matrix coefficients
on the whole group from
vanishing along a sufficiently rich collection of subgroups.
We present a version of this method here.
Other treatments of Theorems \ref{th:hm1} and \ref{th:hm2} can be also found in the monographs
\cite{zim,ht,bm}.

It is worthwhile to mention that the Howe--Moore argument \cite{HM}
is not restricted just to semisimple groups, and it gives the following
general result.
Given an irreducible unitary representation $\pi$ of a group $G$, we denote by
$$
R_\pi=\{g\in G:\, \pi(g)\in \C^\times\hbox{id}\}
$$ its projective kernel.
Since $\pi$ is unitary, it is clear that 
the matrix coefficients $|\left<\pi(g)v,w\right>|$ are constant on cosets of $R_\pi$.
One of the main results of \cite{HM} is asymptotic vanishing of matrix coefficients along $G/R_\pi$:

\begin{theo}\label{th:hm}
Let $G$ be a connected real algebraic group and $\pi$ an irreducible representation of $G$
on a Hilbert space $\mathcal{H}$. Then for any $v,w\in \mathcal{H}$,
$$
\left<\pi(g)v,w\right>\to 0\quad \hbox{as $g\to \infty$ in $G/R_\pi$.}
$$
\end{theo}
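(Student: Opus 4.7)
The plan is to adapt the Howe--Moore contraction argument. Since $|\langle \pi(g)v,w\rangle|$ is constant on cosets of $R_\pi$, it suffices to show that for every sequence $(g_n)$ escaping to infinity modulo $R_\pi$, one has $\pi(g_n)v \rightharpoonup 0$ weakly for all $v \in \cH$. Using a Cartan-type decomposition $g_n = k_n a_n k_n'$ with $k_n, k_n'$ in a maximal compact subgroup $K$ and $a_n$ in a positive Weyl chamber $A^+$ of a maximal $\R$-split torus, and passing to a subsequence where $k_n \to k$ and $k_n' \to k'$ by compactness of $K$, I reduce to showing $\pi(a_n) v' \rightharpoonup 0$ with $v' = \pi(k')v$, where now $a_n \to \infty$ in $A^+$ modulo $A \cap R_\pi$.

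The heart of the argument is a contraction lemma. Suppose, for contradiction, that after passing to a subsequence $\pi(a_n)v' \rightharpoonup v^*$ with $v^* \ne 0$. For any $u$ in the horospherical unipotent subgroup $U^+$ contracted by $a_n^{-1}$ (so $a_n^{-1} u a_n \to e$), strong continuity of $\pi$ gives $\pi(a_n^{-1} u a_n)v' \to v'$ in norm. Combining this with the identity
\[
\pi(u)\pi(a_n)v' = \pi(a_n)\pi(a_n^{-1}ua_n)v'
\]
and taking weak limits yields $\pi(u)v^* = v^*$. Thus $v^*$ is fixed by the entire contracted horospherical subgroup $U^+$, and a symmetric argument applied to $a_n^{-1}$ produces a (possibly different) nonzero weak-limit vector fixed by the opposite horospherical subgroup $U^-$.

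To finish, I would invoke the Mautner phenomenon (cf.\ Theorem~\ref{th:mautner}) to enlarge the stabiliser of $v^*$ until it contains a subgroup $H$ that is normal in $G$. The closed subspace $\cH^H$ of $H$-fixed vectors is then $G$-invariant, and by irreducibility of $\pi$ it must equal $\cH$, forcing $\pi(h) = \mathrm{id}$ for all $h \in H$ and hence $H \subseteq R_\pi$. One then has to show that the sequence $a_n$ cannot both escape to infinity modulo $A \cap R_\pi$ and have all the relevant horospherical subgroups (together with their Mautner enlargements) land inside $R_\pi$, producing the desired contradiction.

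The main obstacle is this last bookkeeping step for a general connected real algebraic group. For simple Lie groups the subgroup generated by a pair of opposite horospherical subgroups is already the entire semisimple factor modulo its centre, and the conclusion follows immediately from Schur's lemma. In the general algebraic setting one must trace the action through a Levi decomposition and check, via root-space considerations in the Lie algebra, that the Mautner-enlarged stabiliser exhausts enough of $G$ to force $a_n$ to remain bounded modulo $R_\pi$; this is where the genuinely inductive core of the Howe--Moore mechanism enters.
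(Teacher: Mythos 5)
The paper does not give a proof of this theorem: it is a starred result (the paper's convention is that theorems marked $(^*)$ are ``stated without proofs''), cited as one of the main results of Howe and Moore \cite{HM}. What the paper does prove in \S\ref{sec:decay}, Steps 1--4, is the easier Theorem \ref{th:hm2} for a connected simple Lie group with finite centre, and your outline is essentially a faithful rendition of that argument: Cartan decomposition to reduce to $a_n\in A^+$, weak compactness of the unit ball, the contraction identity $\pi(u)\pi(a_n)v'=\pi(a_n)\pi(a_n^{-1}ua_n)v'$ together with strong continuity to get invariance of the weak limit under a contracted horospherical subgroup, and Mautner enlargement leading to a contradiction with irreducibility.

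However, for the theorem as stated there is a genuine gap at the very first step. A general connected real algebraic group $G$ need not admit a Cartan decomposition $G=KA^+K$; that decomposition is available for reductive (in particular semisimple) groups, not for groups with a non-trivial unipotent radical. Your proof therefore silently assumes $G$ reductive. The content of Howe and Moore's argument in the general algebraic case is precisely the reduction you defer to the end: one passes through a Levi decomposition $G=L\ltimes N$ (with $L$ reductive, $N$ the unipotent radical), first showing that if $\pi|_N$ has no nonzero invariant vector then the matrix coefficients already vanish along $N$ by Fourier analysis (as in the proof of Proposition \ref{p:howe}), and if it does have one then that vector generates a $G$-invariant subspace, so by irreducibility $N$ acts trivially, i.e.\ $N\subset R_\pi$, reducing to the reductive quotient. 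Only after this reduction does the Cartan-decomposition/contraction mechanism enter. Your closing paragraph acknowledges this is where the work lies, but it is not ``bookkeeping'': it is the main new ingredient beyond the semisimple case, and without carrying it out the proposal does not prove the theorem at the stated level of generality. Two smaller points worth tightening: the contracted horospherical subgroup $U^+$ depends on which wall of $A^+$ the subsequence $a_n$ diverges along, so one must first pass to a subsequence making this well-defined (as in Step 4 of the paper); and the ``symmetric argument'' for $U^-$ does not apply to the same vector $v'$ --- one should apply $a_n^{-1}$ to $v^*$ and use $\langle \pi(a_n^{-1})v^*,v'\rangle\to\|v^*\|^2\ne 0$ to extract a nonzero $U^-$-fixed weak limit.
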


\medskip

Now we start the proof of Theorem \ref{th:hm2}. First, we note that because of the decomposition
$$
\mathcal{H}=\mathcal{H}_G\oplus \mathcal{H}_G^\perp,
$$
where $\mathcal{H}_G$ denotes the subspace of $G$-invariant vectors, it is sufficient to prove that 
for all vectors $v,w\in \mathcal{H}_G^\perp$,
$$
\left<\pi(g)v,w\right>\to 0\quad \hbox{as $g\to \infty$,}
$$
 and without loss of generality, we may assume that $\mathcal{H}$ contains no 
nonzero $G$-invariant vectors.

The proof will proceed by contradiction.
Suppose that, in contrary, 
$$
\left<\pi(g^{(n)})v,w\right>\not\rightarrow 0
$$
for some sequence $g^{(n)}\to \infty$ in $G$.
We divide the proof into four steps.

\bigskip

\noindent {\it \underline{Step 1:} Cartan decomposition.}
We shall use the Cartan decomposition for $G$:
$$
G=KA^+K,
$$
where $K$ is a maximal compact subgroup of $G$, and $A^+$ is a positive Weyl chamber 
of a Cartan subgroup of $G$.
For instance, when $G=\hbox{SL}_d(\mathbb{R})$, this decomposition holds with 
$$
K=\hbox{SO}(d)\quad\hbox{and}\quad A^+=\{\hbox{diag}(a_1,\ldots,a_d):\, a_1\ge a_2\ge \cdots\ge a_d>0\}.
$$
We write 
$$
g^{(n)}=k^{(n)} a^{(n)}\ell^{(n)}\quad\hbox{with}\;\; \hbox{$k^{(n)},\ell^{(n)}\in K$ and $a^{(n)}\in A^+$.}
$$
Since $K$ is compact, it follows that $a^{(n)}\to\infty$ in $A^+$.
Passing to a subsequence, we may arrange that the sequences $k^{(n)}$ and $\ell^{(n)}$ converge in $K$ so that, in particular,
$$
\pi(\ell^{(n)})v\to v'\quad\hbox{and}\quad \pi(k^{(n)})^*w\to  w'
$$
for some vectors $v',w'\in \mathcal{H}$.
We observe that
\begin{align*}
\left<\pi(g^{(n)})v,w\right>-\left<\pi(a^{(n)})v',w'\right>
=&\left<\pi(a^{(n)})\pi(\ell^{(n)})v,\pi(k^{(n)})^*w\right>-\left<\pi(a^{(n)})v',w'\right>\\
=&\left<\pi(a^{(n)})(\pi(\ell^{(n)})v-v'),\pi(k^{(n)})^*w\right>\\
&+
\left<\pi(a^{(n)})v',\pi(k^{(n)})^*w-w'\right>.
\end{align*}
Using that the representation $\pi$ is unitary, we deduce that
\begin{align*}
\left|\left<\pi(a^{(n)})(\pi(\ell^{(n)})v-v'),\pi(k^{(n)})^*w\right>\right|
&\le \left\| \pi(a^{(n)})(\pi(\ell^{(n)})v-v')\right\| \left\|\pi(k^{(n)})^*w\right\|\\
&= \left\| \pi(\ell^{(n)})v-v'\right\| \left\|w\right\|\to 0.
\end{align*}	
Similarly, one can show that 
$$
\left<\pi(a^{(n)})v',\pi(k^{(n)})^*w-w'\right>\to 0.
$$
Hence, we conclude that 
$$
\left<\pi(g^{(n)})v,w\right>=\left<\pi(a^{(n)})v',w'\right>+o(1),
$$
and 
$$
\left<\pi(a^{(n)})v',w'\right>\not\rightarrow 0.
$$

\bigskip

\noindent {\it \underline{Step 2:} weak convergence.}
We use the notion of `weak convergence'.
We recall that a sequence of vectors $x^{(n)}$ 
in a Hilbert space converges weakly to a vector $x$
if $\left<x^{(n)},y\right>\to \left<x,y\right>$ for all $y\in \mathcal{H}$.
We use the notation: $x^{(n)}\stackrel{w}{\longrightarrow}x$.
It is known that every bounded sequence has a weakly convergent subsequence.
In particular, it follows that, after passing to a subsequence,
we may arrange that 
$$
\pi(a^{(n)})v'\stackrel{w}{\longrightarrow}v''
$$ for some vector $v''\in \mathcal{H}$.
Then, in particular, 
$$
\left<\pi(a^{(n)})v',w'\right>\to\left<v'',w'\right>\ne 0.
$$

\bigskip

\noindent {\it \underline{Step 3:} the case when $G=\hbox{\rm SL}_2(\mathbb{R})$.}
From the previous step, we know that 
$$
\left<\pi(a^{(n)})v',w'\right>\to \left<v'',w'\right>\ne 0\quad\hbox{for $a^{(n)}=\left(\begin{tabular}{cc} $t_n$ & $0$ \\ 0 & $t_n^{-1}$\end{tabular} \right)$ with $t_n\to\infty$.}
$$

We claim that the vector $v''$ is invariant under the subgroup 
$$
U=\left\{u(s)= \left(\begin{tabular}{cc} 1 & $s$ \\ 0 & 1\end{tabular} \right):\, s\in \R\right\}.
$$
This property
will be deduced from the identity
$$
(a^{(n)})^{-1} u(s) a^{(n)}=u(s/t_n^2)\to e.
$$
One can easily check that 
$$
\pi(u(s))\pi(a^{(n)})v'\stackrel{w}{\longrightarrow}\pi(u(s))v'',
$$
so that
$$
\pi(u(s))v''= \hbox{w-lim}_{n\to \infty}\, \pi(u(s))\pi(a^{(n)})v'=\hbox{w-lim}_{n\to \infty}\,\pi(a^{(n)})\pi(u(s/t_n^2))v'.
$$
Since 
$$
\|\pi(a^{(n)})\pi(u(s/t_n^2))v'-\pi(a^{(n)})v'\|=\|\pi(u(s/t_n^2))v'-v'\|\to 0,
$$
it follows that 
$$
\hbox{w-lim}_{n\to \infty}\,\pi(a^{(n)})\pi(u(s/t_n^2))v'
=\hbox{w-lim}_{n\to \infty}\,\pi(a^{(n)})v'=v''.
$$
This proves that indeed the vector $v''$ is invariant under $U$.

Next, we show that the vector $v''$ is $G$-invariant.
We consider the function 
$$
F(g)=\left<\pi(g)v'',v''\right>\quad\hbox{ with $g\in G$.}
$$
Since $v''$ is $U$-invariant, the function $F$ is bi-invariant under $U$.
We observe that the map $gU\mapsto ge_1$ 
defines the isomorphism of the homogeneous spaces $G/U$ and $\mathbb{R}^2\backslash \{0\}$.
Hence, we may consider $F$ as a function $\mathbb{R}^2\backslash \{0\}$.
Since the $U$-orbits in $\mathbb{R}^2$ are the lines $y=c$ with $c\ne 0$ and 
the points $(x,0)$, we conclude that $F$ is constant on each line $y=c$ with $c\ne 0$.
By continuity, it follows that $F$ is also constant on the line $y=0$.
For $a(t)=\left(\begin{tabular}{cc} $t$ & $0$ \\ 0 & $t^{-1}$\end{tabular} \right)$,
$$
\left<\pi(a(t))v'',v''\right>=F(a(t)e_1)= F(te_1)=F(e_1)=\|v''\|^2.
$$
Since this gives the equality in the Cauchy--Schwarz inequality,
the vectors $\pi(a(t)) v''$ and $v''$ must be colinear, and
we deduce that $\pi(a(t))v''=v''$, so that the vector $v''$
is also invariant under the subgroup $A=\{a(t)\}$.
Hence, the function $F$ is also constant on $AU$-orbits in $\mathbb{R}^2$.
Since the half-spaces $\{y>0\}$ and $\{y<0\}$ are single $AU$-orbits,
It follows from the continuity of $F$, that this function is identically constant,
that is,
$$
F(g)=\left<\pi(g)v'',v''\right>=\|v''\|^2\quad\hbox{for all $g\in G$.}
$$
This gives the equality in the Cauchy--Schwarz inequality, and as before
we deduce that $\pi(g)v''=v''$ for all $g\in G$. However, we have assumed that 
there is nonzero $G$-invariant vectors. This gives a contradiction, and
completes the proof of the theorem in the case $G=\hbox{SL}_2(\R)$.

\medskip

We note that the above argument, in fact, implies 
the following ``Mautner property'' of unitary representations of $\hbox{SL}_2(\R)$:
every $U$-invariant vector is always $\hbox{SL}_2(\R)$-invariant.
More generally, one says that a closed subgroup $H$ of topological group $G$
has {\it Mautner property} if for every unitary representation of $G$,
$H$-invariant vectors are also invariant under $G$. Subgroups
satisfying this property have appeared in a work of Segal and von Neumann \cite{seg_new},
and Mauntner \cite{mau} used this phenomenon to study ergodicity of the geodesic flow 
on locally symmetric spaces. The following general version of the Mautner property
was established by Moore \cite{m0}:

\begin{theo}\label{th:mautner}
Let $G$ be a (noncompact) simple connected Lie group with finite center.
Then every noncompact closed subgroup of $G$ has the Mautner property.
\end{theo}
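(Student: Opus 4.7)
The proof I would give extends the argument from Step 3 of Theorem \ref{th:hm2}---where invariance under the upper unipotent subgroup of $\hbox{SL}_2(\R)$ was promoted to full $\hbox{SL}_2(\R)$-invariance---to an arbitrary simple noncompact $G$ and an arbitrary noncompact closed $H\subset G$. The central tool is the \emph{Mautner phenomenon}: if $\pi$ is a unitary representation of $G$ on $\mathcal{H}$, $v\in \mathcal{H}$, and $\{h_n\}\subset G$ satisfies $\pi(h_n)v=v$ for all $n$ while $h_n u h_n^{-1}\to e$ for some $u\in G$, then $\pi(u)v=v$. This is immediate from
\[
\|\pi(u)v-v\|\;=\;\|\pi(h_n^{-1})[\pi(h_n u h_n^{-1})v-v]\|\;=\;\|\pi(h_n u h_n^{-1})v-v\|\;\longrightarrow\;0,
\]
using unitarity of $\pi$, the identity $\pi(h_n^{-1})v=v$, and strong continuity at $e$.

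Now let $v\in \mathcal{H}$ be $H$-invariant. Since $H$ is noncompact, pick $h_n\in H$ with $h_n\to\infty$ and Cartan-decompose $h_n=k_n a_n \ell_n$ with $k_n,\ell_n\in K$ and $a_n\in A^+$. After passing to a subsequence, $k_n\to k$ and $\ell_n\to\ell$ in $K$, while $\log(a_n)/\|\log(a_n)\|$ converges to some unit vector $X\in\overline{\mathfrak{a}^+}$. For any $u\in G$ with $\ell u\ell^{-1}$ in the strictly contracting horospherical subgroup
\[
U^-(X)\;=\;\exp\Bigl(\bigoplus_{\alpha(X)<0}\mathfrak{g}_\alpha\Bigr),
\]
a direct calculation shows $h_n u h_n^{-1}=k_n a_n(\ell_n u\ell_n^{-1})a_n^{-1}k_n^{-1}\to e$, so the Mautner phenomenon gives $\pi(u)v=v$, and hence $v$ is $\ell^{-1}U^-(X)\ell$-invariant. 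Replacing $h_n$ by $h_n^{-1}\in H$ yields invariance under $\ell^{-1}U^+(X)\ell$ as well.

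It remains to promote invariance under these two opposite unipotent subgroups to invariance under all of $G$. For $X$ in the interior of the Weyl chamber $\mathfrak{a}^+$, the Lie subalgebra generated by $\bigoplus_\alpha\mathfrak{g}_\alpha$ together with all brackets $[\mathfrak{g}_\alpha,\mathfrak{g}_{-\alpha}]$ is a nonzero ideal of $\mathfrak{g}$ and therefore coincides with $\mathfrak{g}$ by simplicity, so $\langle U^+(X),U^-(X)\rangle$ generates $G$ and we are done. The main obstacle is the non-generic case when $X$ sits on a wall of the Weyl chamber: then $U^\pm(X)$ are only the unipotent radicals of a proper pair of opposite parabolic subgroups and do not by themselves generate $G$. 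My plan to overcome this is a bootstrap via $\mathfrak{sl}_2$-triples: the invariance already obtained includes at least one noncompact root subgroup $U_\alpha\subset U^+(X)$; embedding $U_\alpha$ in an $\mathfrak{sl}_2$-triple by Jacobson--Morozov and applying the $\hbox{SL}_2(\R)$-Mautner property (already established in Step 3 of Theorem \ref{th:hm2}) to this copy yields invariance under the opposite root subgroup $U_{-\alpha}$ and under the diagonal torus in the $\mathfrak{sl}_2$-copy; iterating across all roots and invoking that $G$ (simple and noncompact) is generated by its real-rank-one $\hbox{SL}_2$-subgroups closes the argument.
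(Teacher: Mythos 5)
The paper states this result as a starred theorem (Theorem$^*$), i.e.\ without proof, citing Moore \cite{m0}; there is thus no in-paper argument to compare against. Your scheme --- Cartan decomposition of $h_n$, a ``Mautner phenomenon'' lemma, and a Jacobson--Morozov $\mathfrak{sl}_2$-bootstrap --- is the standard one and closely parallels Steps~1--4 of the paper's proof of Theorem~\ref{th:hm2}. The opening lemma and the closing bootstrap are sound in outline, but there is a genuine gap in the middle step.

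The claimed ``direct calculation'' that $h_nuh_n^{-1}=k_na_n(\ell_nu\ell_n^{-1})a_n^{-1}k_n^{-1}\to e$ whenever $\ell u\ell^{-1}\in U^-(X)$ is false: the convergence $\ell_n\to\ell$ is only asymptotic, and conjugation by $a_n$ amplifies the deviation $\ell_nu\ell_n^{-1}-\ell u\ell^{-1}$ at an exponential rate. Concretely, in $\mathrm{SL}_2(\R)$ take $a_n=\mathrm{diag}(e^n,e^{-n})$, $\ell_n=k(\theta_n)$ with $\theta_n=e^{-n}$ (so $\ell=e$), and $u$ lower-triangular unipotent. The $(1,2)$-entry of $\ell_nu\ell_n^{-1}$ is $-\sin^2\theta_n\sim -e^{-2n}$, which is scaled by $e^{2n}$ under conjugation by $a_n$ and produces a nonvanishing contribution; so $h_nuh_n^{-1}\not\to e$ and your lemma cannot be invoked with the elements $h_n$ directly.

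The repair is exactly the paper's Step~1 smoothing, which your proposal skips. Set $v'=\pi(\ell)v$ and $w'=\pi(k^{-1})v$. From the identity $\pi(a_n)\pi(\ell_n)v=\pi(k_n^{-1})v$, together with $\|\pi(a_n)\pi(\ell_n)v-\pi(a_n)v'\|=\|\pi(\ell_n)v-v'\|\to 0$, one obtains $\pi(a_n)v'\to w'$ \emph{strongly}. Now run the Mautner-phenomenon computation with the pure diagonal $a_n$ instead of $h_n$: for $u\in U^-(X)$ one has $a_nua_n^{-1}\to e$, hence
\[
\|\pi(u)v'-v'\|=\lim_{n}\bigl\|\pi(a_n^{-1})\bigl[\pi(a_nua_n^{-1})w'-w'\bigr]\bigr\|=\lim_{n}\bigl\|\pi(a_nua_n^{-1})w'-w'\bigr\|=0 .
\]
Thus $v'$, and therefore $v$, is invariant under a nontrivial one-parameter unipotent subgroup.

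Two remarks on the remainder. The assertion that ``replacing $h_n$ by $h_n^{-1}$ yields invariance under $\ell^{-1}U^+(X)\ell$'' is incorrect (after rewriting $a_n^{-1}$ in $KA^+K$-form the conjugator becomes $k$, not $\ell^{-1}$), but this is harmless: a single nontrivial unipotent invariance is enough to launch the bootstrap, and the distinction between $X$ interior to the chamber and $X$ on a wall also becomes irrelevant. To close the bootstrap cleanly, and more explicitly than ``iterating across all roots'': Jacobson--Morozov embeds the unipotent into an $\mathfrak{sl}_2$-triple $(Y,H_0,Y')$; Step~3 of the proof of Theorem~\ref{th:hm2} gives invariance under $\exp(\R H_0)$; the Mautner phenomenon applied with $\exp(nH_0)$ then gives invariance under $\exp\mathfrak{g}^{\pm}(H_0)$, the positive/negative $\mathrm{ad}(H_0)$-eigenspaces. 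The Lie subalgebra they generate is $\mathrm{ad}\bigl(\mathfrak{g}^0(H_0)\bigr)$-invariant, hence a nonzero ideal of $\mathfrak{g}$, hence all of $\mathfrak{g}$ by simplicity, which yields $G$-invariance of $v$.
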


Subsequently, more general versions of this result were proved
by Moore \cite{m00}, Wang \cite{wang1,wang2}, and Bader, Furman, Gorodnik, Weiss \cite{bfgw}.

\bigskip

\noindent {\it \underline{Step 4:} inductive argument.}
Our next task is to develop an inductive argument which 
allows to deduce asymptotic vanishing of matrix coefficients
using vanishing along smaller subgroup.
We give a complete proof when $G=\hbox{SL}_d(\R)$,
but similar ideas can be also extended to general semisimple Lie groups
using their structure theory.

For $a\in A^+$, the set of simple roots is given by
$$
\alpha_i(a)=a_i/a_{i+1}\quad\hbox{ for $i=1,\ldots,d-1$.}
$$
The functions $\alpha_i$, $i=1,\ldots,d-1$, provide a coordinate system on $A^+$.
Given a sequence $a^{(n)}\in A^+$ such $a^{(n)}\to \infty$, we have 
$\max_i \alpha_i(a^{(n)})\to \infty$.
After passing to a subsequence, we may assume that $\alpha_i(a^{(n)})\to \infty$
for some $i$. We introduce the subgroup
$$
U_i=\left\{\left(
\begin{tabular}{cc} 
$I_i$ & $u$ \\
$0$ & $I_{d-i}$
\end{tabular}
\right):\, u\in \hbox{M}_{i, d-i}(\mathbb{R}) \right\}.
$$
For $a\in A^+$,
$$
a^{-1}\left(
\begin{tabular}{cc} 
$I_i$ & $(u_{lk})$ \\
$0$ & $I_{d-i}$
\end{tabular}
\right) a=
\left(
\begin{tabular}{cc} 
$I_i$ & $\left(\frac{a_k}{a_l}u_{lk}\right)$ \\
$0$ & $I_{d-i}$
\end{tabular}
\right).
$$
Since for $l\le i <k$,
$$
\frac{a_l}{a_k}=\frac{a_l}{a_{l+1}}\cdots \frac{a_{k-1}}{a_k}\ge \frac{a_i}{a_{i+1}}=\alpha_i(a),
$$
it follows that $a^{(n)}_l/a^{(n)}_k\to\infty$. Hence, for $g\in U_i$,
$$
(a^{(n)})^{-1} ga^{(n)}\to e.
$$
Using this property, we may argue exactly as in Step 3 to conclude that
the vector $v''$ is $U_i$-invariant.
For $1\le l\le i$ and $i+1\le k\le d$,
we denote by $U_{lk}$ the corresponding one-parameter unipotent subgroup of $U_i$.
We observe that $U_{lk}$ can be embedded in an obvious way as a subgroup 
of the group $G_{lk}\simeq \hbox{SL}_2(\R)$ contained in $G$.
Since the vector $v''$ is invariant under $U_{lk}$,
it follows from Step 3 that it is also invariant under $G_{lk}$
when $1\le l\le i$ and $i+1\le k\le d$.
Finally, we check that these groups  $G_{lk}$ generate $G=\hbox{SL}_d(\R)$,
so that the vector $v''$ is $G$-invariant.
This gives a contradiction and completes the proof of the theorem.

\section{Application: counting lattice points}\label{sec:counting}

Given a lattice $\Lambda$ in the Euclidean space $\mathbb{R}^d$ and 
a Euclidean ball $B$ in $\mathbb{R}^d$, one can show using a simple geometric argument
that 
$$
|\Lambda\cap B|\sim \frac{\vol(B)}{\vol(\mathbb{R}^d/\Lambda)}\quad \hbox{as $\vol(B)\to\infty.$}
$$
This result also holds for more general families of domains satisfying some regularity
assumptions. The analogous lattice counting problem for the hyperbolic space is more difficult
because of the exponential volume growth of the hyperbolic balls, and proving an asymptotic
formula even without an error term requires analytic tools.


The hyperbolic lattice point counting problem was studied by Delsarte \cite{del},
Huber \cite{hub}, and Patterson \cite{pat}. 
These works used spectral expansion of the counting functions
in terms of the eigenfunctions of the Laplace-Beltrami operator.
Margulis  in his PhD thesis \cite{mar0,mar} discovered that the lattice point  counting problem on manifolds of variable negative curvature
can be solved using solely the mixing property.
Bartels \cite{bart} proved an asymptotic formula
for the number of lattice points in connected semisimple Lie groups
using a version of Theorem \ref{th:hm1}.
Subsequently, this approach was generalised to counting 
lattice orbit points on affine symmetric varieties
by Duke, Rudnick, Sarnak \cite{drs} and Eskin, Mcmullen \cite{em}.
We refer to the survey \cite{bab} for a comprehensive discussion of
the lattice point counting problems. 

\medskip

In this section, we consider a more general counting 
problem for lattice points in locally compact groups.
In particular, we prove the following result:

\begin{Theorem}\label{th:count0}
Let $G$ be a (noncompact) connected simple matrix Lie group with finite centre, and the sets
\begin{equation}\label{eq:norm_ball}
B_t=\{g\in G:\, \|g\|<t \}
\end{equation}
are defined by a norm on the space of matrices.
Then for any lattice subgroup $\Gamma$ of $G$,
$$
|\Gamma\cap B_t|\sim \frac{\hbox{\rm vol}(B_t)}{\hbox{\rm vol}(G/\Gamma)}\quad \hbox{as $t\to\infty$.}
$$
\end{Theorem}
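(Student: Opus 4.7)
The plan is to follow the thickening strategy of Eskin--McMullen, which converts the counting problem into a mean-equidistribution statement amenable to the mixing Theorem~\ref{th:hm1}. Write $\mu$ for the pushforward of Haar measure on $G$ to $X := G/\Gamma$, so $\mu(X) = \vol(G/\Gamma)$, and set $x_0 := e\Gamma$. Introduce the automorphized counting function
\[
F_t(g\Gamma) \;=\; \sum_{\gamma \in \Gamma} \mathbf{1}_{B_t}(g\gamma) \;=\; |\Gamma \cap g^{-1}B_t|,
\]
so that $F_t(x_0) = |\Gamma \cap B_t|$ and, by unfolding, $\int_X F_t\,d\mu = \vol(B_t)$. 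Pick a small symmetric neighbourhood $\mathcal{O}_\epsilon$ of the identity in $G$ that injects into $X$; submultiplicativity of the matrix norm gives the containments $B_{t/(1+\epsilon)} \subset h^{-1}B_t \subset B_{t(1+\epsilon)}$ for every $h \in \mathcal{O}_\epsilon$. Letting $\bar\psi_\epsilon$ be the descent to $X$ of $\vol(\mathcal{O}_\epsilon)^{-1}\mathbf{1}_{\mathcal{O}_\epsilon}$, so that $\int_X \bar\psi_\epsilon\,d\mu = 1$, and integrating the resulting bounds on $F_t(h\Gamma)$ against $\bar\psi_\epsilon$, one obtains the sandwich
\[
|\Gamma \cap B_{t/(1+\epsilon)}| \;\le\; \int_X F_t\,\bar\psi_\epsilon\,d\mu \;\le\; |\Gamma \cap B_{t(1+\epsilon)}|.
\]

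The heart of the argument will be to prove the asymptotic $\int_X F_t\,\bar\psi_\epsilon\,d\mu \sim \vol(B_t)/\vol(G/\Gamma)$ as $t \to \infty$. Unfolding identifies the left-hand side with $I_t(x_0)$, where $I_t(x) := \int_{B_t} \bar\psi_\epsilon(g \cdot x)\,dg$. Choose a second bump function $\bar\varphi_\delta$ supported in a small neighbourhood of $x_0$ with $\int_X \bar\varphi_\delta\,d\mu = 1$. Applying Fubini's theorem one obtains
\[
\int_X I_t\,\bar\varphi_\delta\,d\mu \;=\; \int_{B_t} \langle \pi(g^{-1})\bar\psi_\epsilon,\, \bar\varphi_\delta \rangle_{L^2(X,\mu)}\,dg.
\]
The $G$-action on $X$ is trivially ergodic, being transitive, so Theorem~\ref{th:hm1} (applied to the probability measure $\mu/\mu(X)$) yields $\langle \pi(g^{-1})\bar\psi_\epsilon, \bar\varphi_\delta \rangle \to 1/\vol(G/\Gamma)$ as $g \to \infty$. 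Since $\vol(B_t \cap K)/\vol(B_t) \to 0$ for every compact $K \subset G$, a dominated-convergence argument upgrades this to $\int_X I_t\,\bar\varphi_\delta\,d\mu \sim \vol(B_t)/\vol(G/\Gamma)$. The weighted integral is then replaced by the pointwise value $I_t(x_0)$ via the uniform-continuity estimate
\[
|I_t(hx_0) - I_t(x_0)| \;\le\; \|\bar\psi_\epsilon\|_\infty\,\vol(B_t \,\triangle\, B_t h)
\]
for $h$ in the support of $\bar\varphi_\delta$, combined with the volume-regularity of the family $\{B_t\}$. Sending $\epsilon \to 0$ in the sandwich then produces the claimed asymptotic.

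I expect the principal obstacle to be the final upgrade from averaged to pointwise convergence, which rests on the volume-regularity of the norm balls --- concretely, $\vol(B_{t(1+\epsilon)})/\vol(B_t) \to 1$ as $\epsilon \to 0$ uniformly in large $t$, together with the F{\o}lner-type smallness $\vol(B_t \,\triangle\, B_t h)/\vol(B_t) \to 0$ for $h$ near the identity. This regularity requires an explicit asymptotic for $\vol(B_t)$, which for a simple matrix Lie group can be extracted from the Cartan decomposition $G = KA^+K$ and the polar integration formula, expressing $\vol(B_t)$ as an integral over the positive Weyl chamber of a density of regular variation. Once this ingredient is secured, the remaining pieces --- the sandwich, unfolding, Theorem~\ref{th:hm1}, and dominated convergence --- combine in a routine fashion.
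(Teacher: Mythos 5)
Your proposal is correct and implements the same Margulis--Eskin--McMullen strategy (thicken, unfold, apply mixing, use regularity of the balls) that the paper uses, but the bookkeeping is organized differently in a way that is worth noting. The paper works with the \emph{two-variable} counting function $F_t(g_1\Gamma,g_2\Gamma)=\sum_{\gamma}\chi_{B_t}(g_1\gamma g_2^{-1})$ on $X\times X$, so that a single unfolding yields $\langle F_t,\phi\otimes\phi\rangle=\int_{B_t}\langle\pi(b)\phi,\phi\rangle\,dm(b)$. The two smoothings you perform in sequence (first against $\bar\psi_\epsilon$, then against $\bar\varphi_\delta$) are absorbed there into the single tensor $\phi\otimes\phi$, and the pointwise bound $|\Gamma\cap B_t|\le\langle F_t^+,\phi\otimes\phi\rangle$ then drops out directly from the set inclusions $B_t\subset g_1^{-1}B_t^+g_2$ for $g_1,g_2$ in a small neighbourhood of $e$. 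Your one-variable set-up instead produces a pointwise sandwich on $|\Gamma\cap B_t|$ in terms of $I_t(x_0)$, but then forces you to pass from the averaged quantity $\int_X I_t\,\bar\varphi_\delta\,d\mu$ back to the pointwise value $I_t(x_0)$, which is the extra step behind your F{\o}lner-type estimate $\vol(B_t\bigtriangleup B_t h)=o(\vol(B_t))$ as $h\to e$. That estimate is not an independent ingredient: since $B_t\bigtriangleup B_t h\subset B_{t(1+\delta)}\setminus B_{t/(1+\delta)}$ for $h$ in a $\delta$-neighbourhood of $e$ (by your submultiplicativity observation), it follows from the same volume-regularity $\limsup_t\vol(B_{t(1+\delta)})/\vol(B_t)\to 1$ as $\delta\to0$ that drives the outer sandwich, so you are effectively using well-roundedness twice. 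Two small cautions: (1)~the condition to quote is precisely $\lim_{\delta\to0}\limsup_{t\to\infty}\vol(B_t\bigtriangleup B_t h)/\vol(B_t)=0$ over $h$ in shrinking neighbourhoods, not the fixed-$h$ F{\o}lner statement (which fails here); (2)~a general norm on matrices need not be submultiplicative, so the inclusions $B_{t/(1+\epsilon)}\subset h^{-1}B_t\subset B_{t(1+\epsilon)}$ should be derived from $\bigl|\|hg\|-\|g\|\bigr|\le C\|h-e\|\,\|g\|$ rather than from $\|hg\|\le\|h\|\|g\|$. Finally, as in the paper, the volume-regularity of norm balls is the one serious input you leave to the literature; the paper also does not reprove it and cites Duke--Rudnick--Sarnak and Eskin--McMullen, where the Cartan-decomposition computation you sketch is carried out.
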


More generally, we establish an asymptotic counting formula in a setting of locally compact groups satisfying a certain mixing assumption.
Let $G$ be a locally compact second countable group and $\Gamma$ a lattice subgroup in $G$.
We fix a Haar measure $m$ on $G$ which also induced the measure $\mu$
on the factor space $X=G/\Gamma$ by
$$
\int_{G/\Gamma} \Big(\sum_{\gamma\in \Gamma}\psi(g\gamma)\Big)\,d\mu(g\Gamma)=\int_G \psi \, dm
\quad\hbox{for $\psi\in C_c(G/\Gamma).$}
$$
We normalise the measure $m$ so that $\mu(X)=1$. Then we obtain a continuous 
measure-preserving action of $G$ on the probability space $(X,\mu)$.
 
We say that a family of bounded measurable sets $B_t$ in $G$ is {\it well-rounded} (cf. \cite{em}) if
for every $\delta>1$, there exists a symmetric neighbourhood $\mathcal{O}$ of 
identity in $G$ such that 
\begin{equation}
\label{eq:well_round}
\delta^{-1}\, m\left(\bigcup_{g_1,g_2\in \mathcal{O}} g_1 B_t g_2 \right)\le m(B_t)\le \delta\, m\left(\bigcap_{g_1,g_2\in \mathcal{O}} g_1 B_t g_2 \right)
\end{equation}
for all $t$.

\begin{Theorem}\label{th:count}
Let $G$ be a locally compact second countable group, and let $B_t$  be a family of 
well-rounded compact sets in $G$ such that  $m(B_t)\to \infty$.
Let $\Gamma$ be a lattice subgroup in $G$ such that the action of $G$ on the space $G/\Gamma$ is mixing.
Then
$$
|\Gamma\cap B_t|\sim m(B_t)\quad \hbox{as $t\to\infty$.}
$$
\end{Theorem}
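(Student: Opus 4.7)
The plan is to reduce $|\Gamma\cap B_t|$ to an integral over $B_t$ of a single matrix coefficient of the $L^2(G/\Gamma)$-representation, evaluate that integral using the mixing hypothesis, and then use well-roundedness to convert the resulting ``thickened'' count back into $|\Gamma\cap B_t|$. Fix $\delta>1$ and let $\mathcal{O}=\mathcal{O}_\delta$ be the symmetric identity neighbourhood provided by \eqref{eq:well_round}; shrinking $\mathcal{O}$ if necessary, I may assume that the projection $\mathcal{O}\to G/\Gamma$ is injective. (The existence of a lattice forces $G$ to be unimodular, a fact used below.) Choose a non-negative $\psi\in C_c(G)$ with $\supp\psi\subseteq\mathcal{O}$ and $\int_G\psi\,dm=1$, and form the $\Gamma$-periodisation
$$\bar\psi(g\Gamma):=\sum_{\gamma\in\Gamma}\psi(g\gamma)\in C_c(G/\Gamma),\qquad\int_{G/\Gamma}\bar\psi\,d\mu=1.$$
Denote by $\pi$ the unitary representation of $G$ on $L^2(G/\Gamma)$ given by $(\pi(g)f)(x)=f(g^{-1}x)$. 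By the mixing hypothesis,
$$\langle\pi(g)\bar\psi,\bar\psi\rangle\longrightarrow\Bigl(\int_{G/\Gamma}\bar\psi\,d\mu\Bigr)^{2}=1\qquad\text{as } g\to\infty\text{ in }G.$$

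For a measurable $A\subseteq G$ put $I_A:=\int_A\langle\pi(g)\bar\psi,\bar\psi\rangle\,dm(g)$. Since the integrand is uniformly bounded and tends to $1$ as $g\to\infty$, a routine splitting of the domain of integration into a fixed compact set and its complement gives $I_{A}=m(A)(1+o(1))$ whenever $m(A)\to\infty$. On the other hand, writing the matrix coefficient as a double integral, applying Fubini, unfolding the $\Gamma$-sum defining $\bar\psi$, and using unimodularity of $G$ to carry out the change of variables, one obtains
$$I_{B_t}=\int_{G/\Gamma}\bar\psi(x)\int_G\psi(v)\,\bigl|\Gamma\cap x^{-1}B_t v\bigr|\,dm(v)\,d\mu(x).$$
Because $\bar\psi$ is supported in $\mathcal{O}\Gamma/\Gamma$, the outer variable may be taken in $\mathcal{O}$; since also $\supp\psi\subseteq\mathcal{O}$, the set-theoretic inclusions $B_t^-\subseteq x^{-1}B_t v\subseteq B_t^+$ that follow directly from the definition of $B_t^\pm$ sandwich the inner count, yielding
$$|\Gamma\cap B_t^-|\le I_{B_t}\le|\Gamma\cap B_t^+|.$$
The identical argument with $B_t$ replaced by any set $A$ gives $|\Gamma\cap A^-|\le I_A\le|\Gamma\cap A^+|$.

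To transfer the sandwich from $B_t^\pm$ to $B_t$ itself, I would observe directly from the definitions that $(B_t^-)^+\subseteq B_t\subseteq(B_t^+)^-$. Applying the preceding inequality with $A=B_t^-$ and with $A=B_t^+$ then yields the chain
$$I_{B_t^-}\le\bigl|\Gamma\cap(B_t^-)^+\bigr|\le|\Gamma\cap B_t|\le\bigl|\Gamma\cap(B_t^+)^-\bigr|\le I_{B_t^+}.$$
Since \eqref{eq:well_round} forces $m(B_t^-)\ge\delta^{-1}m(B_t)\to\infty$ and $m(B_t^+)\le\delta m(B_t)$, the asymptotic $I_{B_t^\pm}=m(B_t^\pm)(1+o(1))$ established above combines to give
$$\delta^{-1}\bigl(1+o(1)\bigr)\ \le\ \frac{|\Gamma\cap B_t|}{m(B_t)}\ \le\ \delta\bigl(1+o(1)\bigr),\quad t\to\infty.$$
Sending $\delta\to 1^{+}$ completes the argument. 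The hard part will be the unfolding computation producing the formula for $I_{B_t}$ together with the bookkeeping of the nested inclusions among $B_t$, $B_t^\pm$, and $(B_t^\pm)^\pm$; the mixing hypothesis itself enters only in its weakest qualitative form, namely pointwise convergence of a single matrix coefficient.
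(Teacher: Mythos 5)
Your proof is correct and follows essentially the same route as the paper: unfold the integral of the matrix coefficient $\int_{B_t}\langle\pi(g)\bar\psi,\bar\psi\rangle\,dm$ into a weighted lattice count, sandwich it between $|\Gamma\cap B_t^{\mp}|$ using well-roundedness, and then evaluate the integral via mixing together with a split of the domain into a compact piece and its complement. The only organizational difference is that you transfer back to $|\Gamma\cap B_t|$ via the nested inclusions $(B_t^-)^+\subseteq B_t\subseteq(B_t^+)^-$, whereas the paper observes directly that the unfolded integrand for $B_t^{+}$ dominates $|\Gamma\cap B_t|$ on $\supp\tilde\phi\times\supp\tilde\phi$; the two bookkeeping schemes are interchangeable.
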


It follows from a Fubini-type argument that the  action of $G$ on $X=G/\Gamma$
is ergodic (i.e., every almost everywhere invariant function is constant almost everywhere).
Hence, when $G$ is a connected simple  Lie group with finite centre,
it follows from Theorem \ref{th:hm1} that the actions of $G$ on $X=G/\Gamma$ is mixing of order two. One can also check that the regularity condition \eqref{eq:well_round}
is satisfied for the norm balls \eqref{eq:norm_ball} (see \cite{drs},\cite{em}).
Hence, Theorem \ref{th:count0} follows from Theorem \ref{th:count}.

\medskip

We start the proof of Theorem \ref{th:count}
by realising the counting function as a function on the homogeneous space $X=G/\Gamma$.
We set 
\begin{equation}
\label{eq:ft}
F_t(g_1,g_2)=\sum_{\gamma\in\Gamma} \chi_{B_t}(g_1\gamma g_2^{-1}).
\end{equation}
In the first part of the argument,
we do not impose any regularity assumptions
on the compact domains $B_t$ and just assume that $m(B_t)\to\infty$.
Since 
$$
F_t(g_1\gamma_1,g_2\gamma_2)=F_t(g_1,g_2)\quad\hbox{for all $g_1,g_2\in G$ and $\gamma_1,\gamma_2\in\Gamma$,}
$$
$F_t$ defines a function on $G/\Gamma\times G/\Gamma$.
We note that $F_t(e,e)=|\Gamma\cap B_t|$, so that it remains to investigate
the asymptotic behaviour of $F_t$ at the identity coset.

The crucial connection between the original counting problem and estimating correlations
is provided by the following computation.
For a real-valued test-function $\phi\in C_c(G/\Gamma)$, we obtain that
\begin{align*}
\left<F_t,\phi\otimes\phi\right>
&=\int_{G/\Gamma\times G/\Gamma} F_t(g_1,g_2)\phi(g_1)\phi(g_2)\, d\mu(g_1\Gamma)d\mu(g_2\Gamma)\\
&=\int_{G/\Gamma\times G/\Gamma} \left(\sum_{\gamma\in\Gamma} \chi_{B_t}(g_1\gamma g_2^{-1})\right) \phi(g_1)\phi(g_2)\, d\mu(g_1\Gamma)d\mu(g_2\Gamma)\\
&=\int_{G/\Gamma\times G/\Gamma} \left(\sum_{\gamma\in\Gamma} \chi_{B_t}(g_1(g_2\gamma)^{-1})\right) \phi(g_1)\phi(g_2)\, d\mu(g_1\Gamma)d\mu(g_2\Gamma)\\
&=\int_{G/\Gamma\times G} \chi_{B_t}(g_1g_2^{-1}) \phi(g_1)\phi(g_2)\, d\mu(g_1\Gamma)dm(g_2).
\end{align*}
We denote by $\pi$ the unitary representation of $G$ on $L^2(G/\Gamma)$ defined as in \eqref{eq:rep}. Using a change of variables $b=g_1g_2^{-1}$, we deduce that
\begin{align*}
	\left<F_t,\phi\otimes\phi\right>
	&=\int_{G/\Gamma\times G} \chi_{B_t}(b) \phi(g_1)\phi(b^{-1}g_1)\, d\mu(g_1\Gamma)dm(b)\\
	&= \int_{B_t} \left<\pi(b)\phi,\phi\right>\, dm(b).
\end{align*}
According to our assumption,  
$$
\left<\pi(b)\phi,\phi\right>\longrightarrow \left(\int_{G/\Gamma} \phi\, d\mu\right)^2\quad \hbox{as $b\to\infty$.}
$$
Hence, since $m(B_t)\to \infty$, it follows that
\begin{equation}
\label{eq:weak}
\frac{\left<F_t,\phi\otimes\phi\right>}{m(B_t)}=
\frac{1}{m(B_t)}\int_{B_t} \left<\pi(b)\phi,\phi\right>\, dm(b)
\longrightarrow \left(\int_{G/\Gamma} \phi\, d\mu\right)^2
\end{equation}
as $t\to\infty$.
We note that \eqref{eq:weak} holds for any functions $F_t$ defined in terms of compact subsets
$B_t$ such that $m(B_t)\to\infty$.

\medskip

Our next task is to upgrade the weak convergence of functions $F_t$ established in \eqref{eq:weak} to the pointwise convergence. For this step, we use the regularity assumption
\eqref{eq:well_round} on the domains $B_t$.
We take any $\delta>1$ and choose the neighbourhood $\mathcal{O}$ of identity in $G$
as in \eqref{eq:well_round}. We set
$$
B_t^+=\bigcup_{g_1,g_2\in \mathcal{O}} g_1 B_t g_2
\quad\hbox{and}\quad
B_t^-=\bigcap_{g_1,g_2\in \mathcal{O}} g_1 B_t g_2,
$$
and consider the corresponding functions $F_t^+$ and $F_t^-$ defined as in \eqref{eq:ft}.
It follows from \eqref{eq:well_round} that
\begin{equation}\label{eq:well2}
\delta^{-1}\, m(B_t^+)\le m(B_t)\le \delta\, m(B_t^-).
\end{equation}
In particular, $m(B_t^\pm)\to \infty$.

We take a nonnegative function $\tilde \phi\in C_c(G)$ such that 
$$
\hbox{supp}(\tilde \phi)\subset \mathcal{O}\quad\hbox{and}\quad \int_G\tilde \phi\, dm=1,
$$
and define 
a function $\phi\in C_c(G/\Gamma)$ as $\phi(g)=\sum_{\gamma\in\Gamma} \tilde \phi(g\gamma)$.
Then
\begin{align*}
\left<F^+_t,\phi\otimes \phi\right>&=\int_{G/\Gamma\times G/\Gamma} F^+_t(g_1,g_2)\phi(g_1)\phi(g_2)\, d\mu(g_1\Gamma)d\mu(g_2\Gamma)\\
&=\int_{G/\Gamma\times G/\Gamma} F^+_t(g_1,g_2) \left( \sum_{\gamma_1,\gamma_2\in\Gamma} \tilde \phi(g_1\gamma_1)\tilde \phi(g_2\gamma_2)\right)\, d\mu(g_1\Gamma)d\mu(g_2\Gamma)\\
&=\int_{G/\Gamma\times G/\Gamma}  \left( \sum_{\gamma_1,\gamma_2\in\Gamma} F^+_t(g_1\gamma_1,g_2\gamma_2)\tilde \phi(g_1\gamma_1)\tilde \phi(g_2\gamma_2)\right)\, d\mu(g_1\Gamma)d\mu(g_2\Gamma)\\
&=\int_{G\times G}  F^+_t(g_1,g_2)\tilde \phi(g_1)\tilde \phi(g_2)\, dm(g_1)dm(g_2).
\end{align*}
We observe that when $g_1,g_2\in\mathcal{O}$,
$$
F^+_t(g_1,g_2)=\sum_{\gamma\in \Gamma} \chi_{g_1^{-1}B^+_t g_2}(\gamma)\ge 
\sum_{\gamma\in \Gamma} \chi_{B_{t}}(\gamma)=|\Gamma\cap B_{t}|,
$$
so that since $\hbox{supp}(\tilde \phi)\subset \mathcal{O}$, we obtain that
$$
\left<F^+_t,\phi\otimes \phi\right>\ge |\Gamma\cap B_t| \left(\int_G \tilde \phi \, dm\right)^2\ge |\Gamma\cap B_{t}|.
$$
Hence, it follows from \eqref{eq:weak} and \eqref{eq:well2} that
$$
\limsup_{t\to\infty} \frac{|\Gamma\cap B_{ t}|}{m(B_t)}\le 
\delta\limsup_{t\to\infty} \frac{\left<F^+_{t},\phi\otimes \phi\right>}{m(B^+_t)}
=\delta
$$
for all $\delta>1$. 
A similar argument applied to the function $F_t^-$ gives
$$
\liminf_{t\to\infty} \frac{|\Gamma\cap B_{ t}|}{m(B_t)}\ge 
\delta^{-1}\liminf_{t\to\infty} \frac{\left<F^-_{t},\phi\otimes \phi\right>}{m(B^-_t)}
=\delta^{-1}
$$
for all $\delta>1$. This implies Theorem \ref{th:count}.

\medskip

It is worthwhile to mention that 
Gorodnik and Nevo \cite{gn1,gn2} showed 
the asymptotic formula for counting lattice points can be deduced solely from 
an ergodic theorem for averages along the sets $B_t$ on the space $X=G/\Gamma$. `Ergodic theorem' is a much more prolific phenomenon than `mixing property'.

\section{Quantitative estimates on matrix coefficients}\label{sec:cor_2}

The goal of this section is to establish quantitative estimates on 
matrix coefficients for unitary representations $\pi$ of 
higher-rank simple groups  $G$ (for instance, for $G=\hbox{SL}_d(\R)$
with $d\ge 3$). It is quite remarkable that this quantitative bound 
for higher-rank groups holds uniformly for all representations without invariant vectors. 

A qualitative bound on matrix coefficients may only hold on a proper subset of vectors,
and to state such a bound, we introduce a notion of $K$-finite vectors.
Let $K$ be a maximal compact subgroup of $G$.
By the Peter--Weyl Theorem, 
a unitary representation $\pi|_K$ 
splits as a sum of finite-dimensional irreducible representations.
A vector $v$ is called {\it $K$-finite} if the span of $\pi(K)v$ is of 
finite dimension. 
We set 
$$
d_K(v)=\dim \left<\pi(K)v\right>.
$$
The space of $K$-finite vectors is dense in the represenation space.

With this notation, we prove:

\begin{Theorem}
	\label{th:high-rank}
	Let $G$ be a (noncompact) connected   simple higher-rank matrix Lie group with finite centre
	and $K$ a maximal compact subgroup of $G$.
	Then there exist $c,\delta>0$ such that for any unitary representation $\pi$ of 
	$G$ on a Hilbert space $\mathcal{H}$ without nonzero $G$-invariant vectors,
	the following estimate holds: for all elements $g\in G$ and all $K$-finite vectors $v,w\in \mathcal{H}$,
	$$
	|\left<\pi(g)v,w\right>|\le c\,  d_K(v)^{1/2}d_K(w)^{1/2}\|v\|\|w\|\, \|g\|^{-\delta}.
	$$
\end{Theorem}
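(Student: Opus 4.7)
The proof adapts the $KAK$-reduction from the proof of Theorem \ref{th:hm2} (which produces the $d_K$-factors), then reduces to bi-$K$-invariant ``spherical'' matrix coefficients, and finally invokes a representation-independent pointwise decay estimate that is available precisely because $G$ is of higher rank.

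\textbf{$KAK$-reduction.} Writing $g = k_1 a k_2$ with $a \in A^+$ and $k_1, k_2 \in K$, one has $\langle \pi(g)v, w\rangle = \langle \pi(a)\pi(k_2)v, \pi(k_1^{-1})w\rangle$. Choose orthonormal bases $\{e_i\}_{i \leq d_K(v)}$ and $\{f_j\}_{j \leq d_K(w)}$ of the finite-dimensional $K$-invariant subspaces $\langle \pi(K)v\rangle$ and $\langle \pi(K)w\rangle$, and expand $\pi(k_2)v = \sum_i c_i e_i$, $\pi(k_1^{-1})w = \sum_j d_j f_j$, so that $\sum|c_i|^2 = \|v\|^2$ and $\sum|d_j|^2 = \|w\|^2$. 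The triangle inequality combined with Cauchy-Schwarz in the form $\sum_i |c_i| \leq d_K(v)^{1/2}\|v\|$ yields
\begin{align*}
|\langle \pi(g)v, w\rangle| \;\leq\; d_K(v)^{1/2}\, d_K(w)^{1/2}\, \|v\|\, \|w\| \cdot \sup_{i,j} |\langle \pi(a)e_i, f_j\rangle|.
\end{align*}
Since $\|a\|$ and $\|g\|$ agree up to multiplicative constants depending only on $K$, the problem reduces to establishing a uniform bound $|\langle \pi(a) e, f\rangle| \leq c\|a\|^{-\delta}$ for $a \in A^+$, unit $K$-finite vectors $e, f$, and every unitary representation $\pi$ without invariant vectors.

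\textbf{Spherical reduction and higher-rank estimate.} By the Peter-Weyl decomposition of $\pi|_K$ and Schur orthogonality, a matrix coefficient between $K$-finite vectors of fixed $K$-types can be written as a $K\times K$ average of matrix coefficients of the form $\langle \pi(k_1' a k_2') v_0, w_0\rangle$, where $v_0, w_0$ are $K$-invariant unit vectors in suitable sub-representations. This reduces the problem to bounding the bi-$K$-invariant \emph{spherical} matrix coefficient $\varphi_\pi(g) = \langle \pi(g)v_0, w_0\rangle$. The higher-rank hypothesis supplies the crucial representation-independent input: there exist $p = p(G) < \infty$ and $\delta_0 = \delta_0(G) > 0$ such that
\begin{align*}
|\varphi_\pi(g)| \;\leq\; c\, \Xi(g)^{2/p} \quad \text{for all } g \in G,
\end{align*}
for every irreducible unitary $\pi$ without invariant vectors, where $\Xi$ is Harish-Chandra's elementary spherical function, and moreover $\Xi(a) \leq c'\|a\|^{-\delta_0}$ on $A^+$. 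Combining this with the reduction above yields the theorem with $\delta = 2\delta_0 / p$.

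\textbf{Main obstacle.} The hard part is producing the uniform exponent $p = p(G)$ in the spherical bound, which must be independent of the representation $\pi$. This fails for rank-one groups such as $\mathrm{SL}_2(\mathbb{R})$, whose complementary series contains representations with spherical matrix coefficients decaying arbitrarily slowly, ruling out any representation-independent exponent. In higher rank, the trivial representation is isolated in the unitary dual (Kazhdan's property (T)), and the classification of irreducible spherical unitary representations due to Kostant and Cowling provides the required universal spectral gap. The remaining ingredients --- the $KAK$-reduction extracting the $d_K$-factors and the Peter-Weyl/Schur averaging to spherical vectors --- are essentially formal.
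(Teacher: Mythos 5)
Your $KAK$-reduction extracting the $d_K$-factors is fine and essentially parallels the device used in the paper's proof of Theorem \ref{thy:semi}, and your final ingredient --- a representation-independent integrability exponent $p(G)<\infty$ for higher-rank $G$, due to Cowling (and quantified by Oh) --- is a genuine theorem that could be used to prove the result. But your approach is a different, ``black-box'' route compared to the paper, and there is a real gap in how you connect the two ends.

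The gap is in the ``spherical reduction'' step. You claim that a $K$-finite matrix coefficient $\langle\pi(g)v,w\rangle$ can be written, via Peter--Weyl and Schur orthogonality, as a $K\times K$ average of matrix coefficients of $K$-\emph{invariant} vectors in ``suitable sub-representations,'' and that this reduces the problem to bounding the spherical function $\varphi_\pi(g)=\langle\pi(g)v_0,w_0\rangle$. This fails in general: an irreducible unitary representation $\pi$ without $G$-invariant vectors may have no $K$-invariant vectors at all (e.g.\ discrete series representations), in which case $\varphi_\pi\equiv 0$ and the proposed bound $|\varphi_\pi(g)|\le c\,\Xi(g)^{2/p}$ is vacuous and controls nothing. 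Even when $\pi$ does have spherical vectors, a $K\times K$ average of $\langle\pi(k_1gk_2)v,w\rangle$ against matrix entries of the $K$-types of $v,w$ does not produce matrix coefficients of $K$-\emph{invariant} vectors of $\pi$; Schur orthogonality reconstructs the original matrix coefficient, it does not change the $K$-type. The correct bridge from Cowling's uniform $L^{p(G)}$-integrability to the bound you want on $K$-finite matrix coefficients is the Cowling--Haagerup--Howe inequality (Theorem \ref{th:chh} in the paper), which bounds $K$-finite matrix coefficients directly by $d_K(v)^{1/2}d_K(w)^{1/2}\|v\|\|w\|\,\Xi(g)$ once $L^{2+\epsilon}$-integrability is known, with no spherical reduction; equivalently one embeds $\pi^{\otimes p/2}$ into a multiple of the regular representation and applies Herz's majoration (Proposition \ref{p:harish}). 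If you replace your second paragraph with this, your argument becomes a valid alternative proof.

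For comparison, the paper avoids invoking the classification of the unitary dual or any uniform integrability theorem for $G$ itself. Instead it restricts to an embedded copy of $\hbox{SL}_2(\R)\ltimes\R^2$, obtains quantitative decay there by an elementary spectral-measure argument over the characters of $\R^2$ (Proposition \ref{p:howe}), upgrades this to $L^{2+\epsilon}$-integrability along $\hbox{SL}_2(\R)$, embeds $\pi\otimes\pi|_{\hbox{\scriptsize SL}_2(\R)}$ into the regular representation, and applies Herz's bound. This is entirely self-contained and gives the exponent explicitly (roughly $1/4$ in the simple root, before summing over embeddings), at the cost of not being optimal in $\Xi$. Your route would give sharper exponents in terms of $\Xi^{2/p(G)}$ but relies on deeper inputs from the classification of unitary representations.
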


As we already remarked in Section \ref{sec:decay}, 
asymptotic properties of matrix coefficients for semisimple Lie groups
has been studied extensively starting with foundational works of Harish-Chandra
(see, for instance, \cite{hc}).
Explicit quantitative bounds on matrix coefficients
have been obtained, in particular, in the works of 
Borel and Wallach \cite{BW}, Cowling \cite{cowling}, Howe \cite{h},
Casselman and Milici\'c \cite{cm},
Cowling, Haagerup, and Howe \cite{chh},
Li \cite{li}, Li and Zhu \cite{li2}, and Oh \cite{oh1,oh2}. 
Here we follow the elegant elementary approach of Howe and Tan \cite{ht}
to prove Theorem \ref{th:high-rank}.

\medskip

We start our investigation by analysing the unitary representations of the semidirect product
$$
L=\hbox{SL}_2(\R)\ltimes \R^2.
$$
We shall use the following notation:
\begin{align}
S&=\hbox{SL}_2(\R), \nonumber \\
K_S&=\hbox{SO}(2)=\left\{k(\theta)=
\left( \begin{tabular}{cc} $\cos\theta$ & $-\sin\theta$ \\ $\sin\theta$ & $\cos\theta$ \end{tabular}\right):\, \theta\in [0,2\pi)\right\}, \nonumber\\
A_S&=\left\{a(t)=\left( \begin{tabular}{cc} $t$ & $0$ \\ $0$ & $t^{-1}$ \end{tabular}\right):\, t>0\right\},\label{eq:notation}\\
U_S&=\left\{u(s)=\left(\begin{tabular}{cc} 1 & $s$ \\ 0 & 1\end{tabular} \right):\,
s\in \R\right\}.\nonumber
\end{align}

\begin{Prop}\label{p:howe}
Let $\pi$ be a unitary representation of $L$ on the Hilbert space $\mathcal{H}$
that does not have any nonzero $\mathbb{R}^2$-invariant vectors.
Then for all vectors $v, w$ belonging to a $K_S$-invariant dense subspace of $\mathcal{H}$,
\begin{equation}
\label{eq:bound0}
|\left<\pi(a(t))v,w\right>|\le c(v,w)\, t^{-1} \quad \hbox{when $t\ge 1$.}
\end{equation}
\end{Prop}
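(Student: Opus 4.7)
The natural approach is to apply spectral theory to the normal abelian subgroup $\R^2 \subset L$ and then exploit the fact that $a(t)$ acts on $\R^2$ as an area-preserving linear map with one expanding and one contracting direction: the intersection of any fixed compact annulus with its $a(t)$-image has Lebesgue measure of order $t^{-1}$, and this area contraction is the source of the claimed decay.

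Concretely, I would first apply the SNAG (Stone--Naimark--Ambrose--Godement) theorem to the restriction $\pi|_{\R^2}$ to obtain a projection-valued measure $P$ on $\widehat{\R^2}\cong\R^2$ such that $\pi(x)=\int_{\R^2}e^{i\langle \xi,x\rangle}\,dP(\xi)$ for every $x\in\R^2$. The hypothesis of no nonzero $\R^2$-invariant vectors is equivalent to $P(\{0\})=0$, so $P$ concentrates on $\R^2\setminus\{0\}$. The equivariance $\pi(g)\pi(x)\pi(g)^{-1}=\pi(gx)$ for $g\in S$ translates into $\pi(g)P(E)\pi(g)^{-1}=P(g^{-T}E)$, and since $S$ acts transitively on $\R^2\setminus\{0\}$ with point stabilizer conjugate to $U_S$, Mackey's theorem realizes $\mathcal H$ as a vector-valued $L^2$ space
\[
\mathcal H \;\cong\; L^2(\R^2\setminus\{0\},\,d\xi;\,V),
\]
on which $\R^2$ acts by multiplication by the character $e^{i\langle\xi,\cdot\rangle}$ and $S$ acts as $(\pi(g)f)(\xi)=\rho(g,\xi)\,f(g^{-1}\xi)$ for a measurable cocycle $\rho$ with values in the unitary group of $V$; Lebesgue measure is $S$-invariant because $\det g=1$.

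For the dense $K_S$-invariant subspace $\mathcal D \subset \mathcal H$, I would take the bounded measurable $V$-valued functions with compact support in $\R^2\setminus\{0\}$. Density follows from $P(\{0\})=0$, and $K_S$-invariance is immediate since $K_S=\mathrm{SO}(2)$ preserves every annulus. For $f,h\in\mathcal D$ supported in an annulus $\mathcal{A}=\{r_1\le|\xi|\le r_2\}$, the matrix coefficient becomes
\[
\langle \pi(a(t))f,h\rangle \;=\; \int_{\R^2\setminus\{0\}}\bigl\langle \rho(a(t),\xi)\,f(a(t)^{-1}\xi),\,h(\xi)\bigr\rangle_V\,d\xi,
\]
whose integrand is supported on the set where both $\xi$ and $a(t)^{-1}\xi=(t^{-1}\xi_1,t\xi_2)$ lie in $\mathcal{A}$. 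The second condition forces $|\xi_2|\le r_2/t$ for $t\ge 1$, so this set has Lebesgue area at most $4r_2^2/t$. Since $\rho(a(t),\xi)$ is unitary on $V$, the pointwise bounds $\|\rho(a(t),\xi)\,f(a(t)^{-1}\xi)\|_V\le\|f\|_\infty$ and $\|h(\xi)\|_V\le\|h\|_\infty$ yield $|\langle \pi(a(t))f,h\rangle|\le 4 r_2^2\,\|f\|_\infty\|h\|_\infty\,t^{-1}$, which is the desired estimate with $c(f,h)=4r_2^2\|f\|_\infty\|h\|_\infty$.

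The main obstacle is the rigorous production of the Mackey realization for a general (possibly reducible) unitary representation of $L$: one needs a direct integral over stabilizer representations with a measurable field of Hilbert spaces and a compatible unitary cocycle. A cleaner alternative that bypasses Mackey is to work intrinsically with the complex spectral measure $\sigma_{v,w}$ for vectors of the form $v=\pi(\varphi)v'$ with $\varphi\in C_c^\infty(\R^2)$ and $\mathrm{supp}\,\hat\varphi$ compact and away from the origin: then $\sigma_{v,w}$ is absolutely continuous with $L^1\cap L^\infty$ density, and the same area-contraction bound can be reproved by a direct Plancherel computation on $\R^2$, avoiding reducibility issues entirely.
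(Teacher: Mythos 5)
Your main route---SNAG plus Mackey's imprimitivity theorem to realize $\mathcal{H}$ as $L^2(\R^2\setminus\{0\},d\xi;V)$ with a unitary cocycle, then bound $\langle\pi(a(t))f,h\rangle$ by $\|f\|_\infty\|h\|_\infty$ times the Lebesgue area of $\{\xi\in\mathcal{A}:a(t)^{-1}\xi\in\mathcal{A}\}$---is correct but genuinely different from the paper's. The paper never leaves the abstract projection-valued-measure picture: it restricts to $\mathcal{H}_s=\mathrm{Im}(P_{\Omega_s})$, passes to $K_S$-eigenvectors, and exploits the fact that for a $K_S$-eigenvector $v$ the equivariance $\pi(k_\theta)P_{S_i}v=e^{i\lambda\theta}P_{k_\theta S_i}v$ forces the spectral mass $\|P_\cdot v\|^2$ to be equi-distributed over equal-angle sectors, so $\|P_{S_1}v\|=m^{-1/2}\|v\|$ for a partition of $\R^2\setminus\{0\}$ into $m$ sectors. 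Since $\Omega_s\cap a(t)\Omega_s$ lies in sectors of angle $\alpha\ll s^2/t$, one gets $\|P_{\Omega_s\cap a(t)\Omega_s}v\|\ll_s t^{-1/2}\|v\|$, and two applications of this plus Cauchy--Schwarz give the $t^{-1}$ decay. This avoids producing a concrete $L^2$ model entirely and so sidesteps the reducibility issue you correctly flag; the trade-off is that your Mackey version makes the geometric mechanism (decay $=$ area contraction of the annulus) more transparent and gives an explicit constant, at the cost of invoking a nontrivial classification theorem.

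Your ``cleaner alternative,'' however, has a real gap. Writing $v=\pi(\varphi)v'$ with $\hat\varphi$ compactly supported away from the origin gives $\sigma_{v,w}=\hat\varphi\,d\sigma_{v',w}$, which inherits compact support in $\R^2\setminus\{0\}$, but it is \emph{not} automatically absolutely continuous with respect to Lebesgue measure, let alone with an $L^1\cap L^\infty$ density: the underlying spectral measure $\sigma_{v',w}$ could a priori be singular, and multiplying by $\hat\varphi$ does not change its measure class. The fact that rules singularity out is precisely $S$-transitivity on $\R^2\setminus\{0\}$ (any $\sigma$-finite $S$-quasi-invariant measure class there is the Lebesgue class), which is the same input that underwrites the Mackey realization; so the ``Plancherel'' route does not actually bypass the issue you were trying to avoid. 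The paper's sector argument is the genuinely elementary way out: the equi-partition $\|P_{S_i}v\|=m^{-1/2}\|v\|$ holds for an \emph{arbitrary} spectral measure once $v$ is a $K_S$-eigenvector, so no absolute-continuity or Mackey input is needed at all.
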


\begin{proof}
We consider the restricted representation $\pi|_{\mathbb{R}^2}$
which can be decomposed with respect irreducible one-dimensional unitary representations of $\R^2$
--- the unitary characters of $\R^2$:
$$
r\mapsto \chi_z(r)=e^{i\left<z,r\right>},\quad z\in\R^2.
$$
Namely, there exists a Borel projection-valued measure $P$ on $\R^2$ such that
$$
\pi(r)=\int_{\R^2}\chi_z(r)\, dP_z\quad\hbox{ for $r\in \R^2$. }
$$
We shall use that the measure $P$ satisfies an equivariance property with respect
to the action of $S$: since 
$$
\pi(g)\pi(r)\pi(g)^{-1}=\pi(g(r))\quad\hbox{for $g\in S$ and $r\in \R^2$,}
$$
and
$$
\pi(g(r))=\int_{\R^2}\chi_z(g(r))\, dP_z=\int_{\R^2}\chi_{g^{t}(z)}(r)\, dP_z,
$$
it follows that 
\begin{equation}
\label{eq:equiv}
\pi(g)P_B\pi(g)^{-1}=P_{(g^t)^{-1}B}\quad \hbox{ for Borel $B\subset \R^2$ and $g\in S$.}
\end{equation}
For $s>1$, we set
$$
\Omega_s=\{r\in\R^2:\, s^{-1}\le \|r\|\le s \},
$$
and consider the closed subspace $\mathcal{H}_s=\hbox{Im}(P_{\Omega_s})$.
Since the set $\Omega_s$ is invariant under $K_S$, it follows 
from \eqref{eq:equiv} that the subspace $\mathcal{H}_s$ is $K_S$-invariant.
Using that the projection-valued measure $P$ is strongly continuous, 
we deduce that for all $v\in \mathcal{H}$,
$$
P_{\Omega_s}v\to P_{\R^2\backslash \{0\}}v=v-P_{0}v\quad\hbox{ as $s\to\infty$.}
$$
Moreover, since we assumed that there is no nonzero $\R^2$-invariant vectors, $P_0=0$.
This shows that $\cup_{s>1} \mathcal{H}_s$ is dense in $\mathcal{H}$.
Using that the span of $K_S$-eigenvectors is dense in $\mathcal{H}_s$,
it remains to show that \eqref{eq:bound0} holds
for all $K_S$-eigenvectors in $\mathcal{H}_s$ with $s>1$.
We recall that  $P_{B_1}P_{B_2}=P_{B_1\cap B_2}$ for Borel $B_1,B_2\subset \R^2$.
This allows us to compute that for $v,w\in \mathcal{H}_s$,
\begin{align*}
\left<\pi(a(t))v,w\right>&=\left<\pi(a(t))P_{\Omega_s}v, P_{\Omega_s}w\right>=
\left<P_{a(t)^{-1}\Omega_s}\pi(a(t))v, P_{\Omega_s}w\right>\\
&=\left<P_{a(t)^{-1}\Omega_s}\pi(a(t))v, P_{a(t)^{-1}\Omega_s}P_{\Omega_s}w\right>\\
&=\left<P_{a(t)^{-1}\Omega_s}\pi(a(t))v, P_{a(t)^{-1}\Omega_s\cap\Omega_s}w\right>\\
&=\left<P_{a(t)^{-1}\Omega_s\cap \Omega_s}\pi(a(t))v, P_{a(t)^{-1}\Omega_s\cap\Omega_s}w\right>\\
&=\left<\pi(a(t)) P_{\Omega_s\cap a(t)\Omega_s}v, P_{a(t)^{-1}\Omega_s\cap\Omega_s}w\right>.
\end{align*}	
Hence, by the Cauchy--Schwarz inequality, 
\begin{align}\label{eq:cs}
|\left<\pi(a(t))v,w\right>|\le 
\|P_{\Omega_s\cap a(t)\Omega_s}v\|\, \|P_{a(t)^{-1}\Omega_s\cap\Omega_s}w\|.
\end{align}
We observe that the region $\Omega_s\cap a(t)\Omega_s$ is contained in two sectors of angle 
$$
\alpha\le 2\sin^{-1}(s^2/t)
$$
(see Figure \ref{f:ellip}). 
\begin{figure}[b]
	\includegraphics[width=0.6\linewidth]{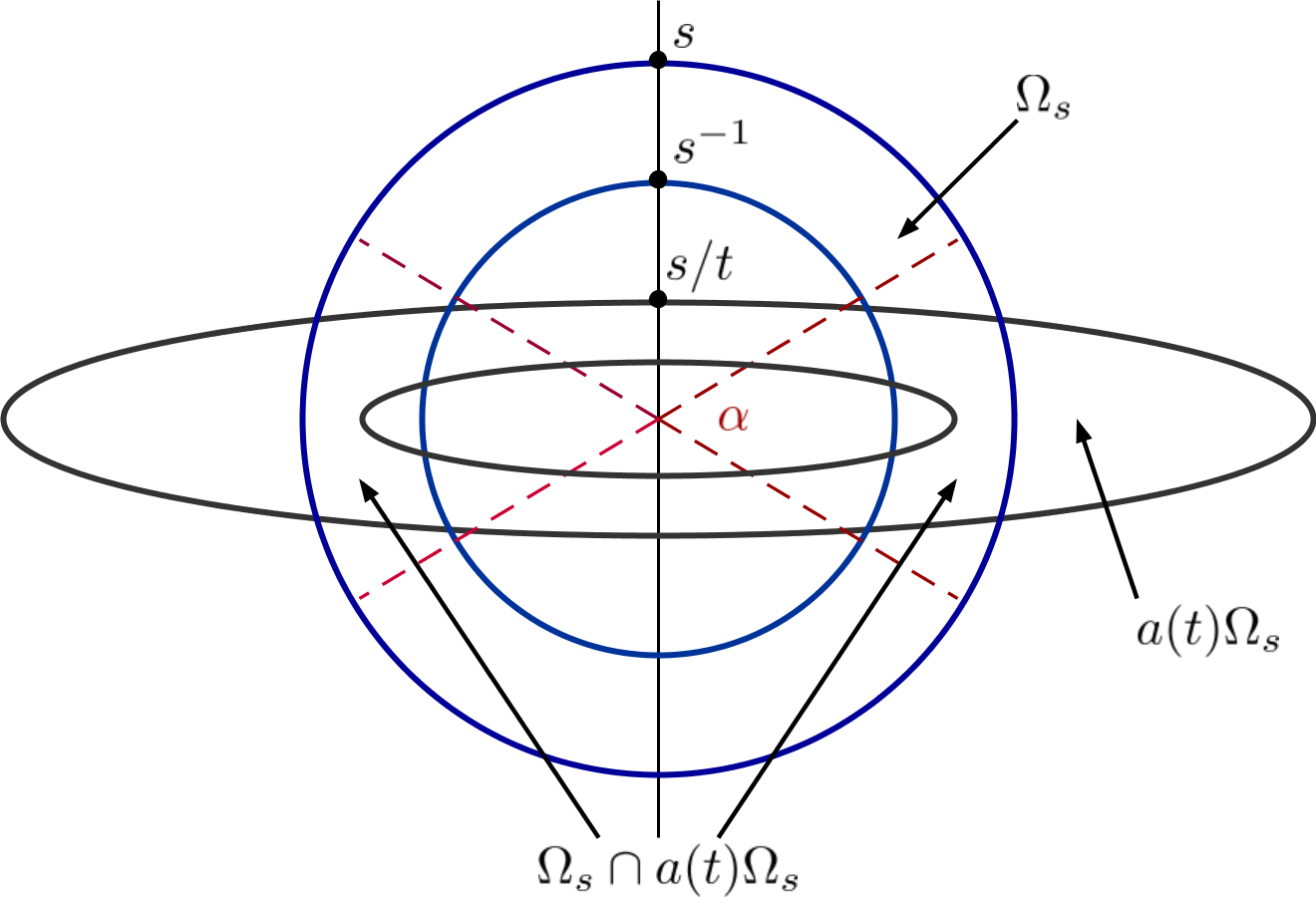} \label{f:ellip}
	\caption{Estimating the angle $\alpha$.} \label{f:ellip}
\end{figure}
We take $\theta_m=2\pi/m$ such that $\theta_{m+1}\le\alpha<\theta_m$,  and consider the partition
\begin{equation}
\label{eq:decomp0}
\R^2\backslash \{0\}=\bigsqcup_{i=1}^m S_i
\end{equation}
into sectors such that 
\begin{equation}
\label{eq:contain}
\Omega_s\cap a(t)\Omega_s\subset S_1.
\end{equation}
We note that $k_{\theta_m}(S_i)=S_{(i+1)\,\hbox{\tiny mod}\, m}$.
Now we suppose that $v$ is an eigenvector of $K_S$, that is, 
$\pi(k_\theta)v=e^{i\lambda\theta}v$ for some $\lambda\in \R$. Then
using \eqref{eq:equiv}, we deduce that
$$
\pi(k_{\theta_m})P_{S_i}v=P_{k_{\theta_m}S_i}\pi(k_{\theta_m})v=e^{i\lambda\theta_m} P_{S_{(i+1)\,\hbox{\tiny mod}\, m}}v.
$$
Hence, it follows that 
$$
\|P_{S_i}v\|=\|P_{S_{(i+1)\,\hbox{\tiny mod}\, m}}v\|.
$$	
For \eqref{eq:decomp0}, we obtain
the orthogonal decomposition
$$
v=P_{\R^2\backslash \{0\}}v=\sum_{i=1}^m P_{S_i} v,
$$
so that 
$$
\|v\|^2=\sum_{i=1}^m \|P_{S_i}v\|^2\quad\hbox{and}\quad \|P_{S_i}v\|=m^{-1/2}\|v\|.
$$
It follows from the inclusion \eqref{eq:contain} that
$$
\|P_{\Omega_s\cap a(t)\Omega_s}v\|\le \|P_{S_1}v\|\ll \left(\sin^{-1}(s^2/t)\right)^{1/2} \|v\|\ll_s t^{-1/2}\|v\|.
$$
A similar argument also gives that when $w$ is an eigenvector of $K_S$,
$$
\|P_{a(t)^{-1}\Omega_s\cap\Omega_s}w\|\ll_s t^{-1/2}\|w\|.
$$
Finally, we conclude  from \eqref{eq:cs} that 
$$
|\left<\pi(a(t))v,w\right>|\ll_s t^{-1}\|v\|\|w\|,
$$
which proves the proposition.
\end{proof}	

The above argument has been generalised by Konstantoulas \cite{konst} to give 
bounds on correlations of higher orders.

\medskip

Proposition \ref{p:howe} gives the best possible bound in terms of the parameter $t$,
but the drawback is that the dependence on the vectors $v,w$ is not explicit. Our goal will
be to derive a more explicit estimate for matrix coefficients. 
We observe that Proposition \ref{p:howe} implies an integrability estimate
on the functions $g\mapsto \left<\pi(g)v,w\right>$, $g\in G$. 
This will eventually allow us to reduce our study to the case of the regular representation.
We recall that the invariant
measure on $S=\hbox{SL}_2(\R)$ 
with respect to the Cartan decomposition $S=K_SA_SK_S$ is given by
$$
\int_S f(g)\,dg=\int_{[0,2\pi)\times [1,\infty)\times [0,2\pi)} f(k({\theta_1})a(t)k({\theta_2}))(t^2-t^{-2})\, d\theta_1 \frac{dt}{t}d\theta_2
$$
for $f\in C_c(G)$. Hence, from Proposition \ref{p:howe}, we deduce that

\begin{Cor}\label{c:integral}
With the notation as in Proposition \ref{p:howe},
for vectors $v,w$ belonging to a dense $K_S$-invariant subspace of $\mathcal{H}$,
the functions $g\mapsto \left<\pi(g)v,w\right>$, $g\in S$, are $L^{2+\epsilon}$-integrable
for all $\epsilon>0$.
\end{Cor}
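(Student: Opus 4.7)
The plan is to combine the pointwise bound of Proposition \ref{p:howe} with the Cartan decomposition $S = K_S A_S K_S$ and its associated Haar measure formula. Using the unitarity of $\pi$, for $g = k(\theta_1) a(t) k(\theta_2)$ with $t \ge 1$ we have
\[
|\langle \pi(g) v, w \rangle| = |\langle \pi(a(t)) \pi(k(\theta_2)) v, \pi(k(\theta_1))^{-1} w \rangle|,
\]
and the $K_S$-invariance of the dense subspace produced in Proposition \ref{p:howe} means that the right-hand side still falls under the scope of that proposition, provided I can secure a decay constant that is uniform in $(\theta_1, \theta_2)$.

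Establishing this uniformity is the one content-bearing step. The cleanest route is to prove the corollary first for $K_S$-eigenvectors $v, w$, which span the dense subspace: in that case $\pi(k(\theta_2)) v$ and $\pi(k(\theta_1))^{-1} w$ differ from $v$ and $w$ only by unit-modulus scalars, so $|\langle \pi(k_1 a(t) k_2) v, w \rangle| = |\langle \pi(a(t)) v, w \rangle| \le c(v,w)\, t^{-1}$ with no dependence on $k_1, k_2$ whatsoever. A general vector in the span is a finite linear combination of eigenvectors, so the full matrix coefficient is a finite sum of such terms and admits a uniform bound $|\langle \pi(g) v, w \rangle| \le C(v,w)\, t^{-1}$. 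Alternatively, inspecting the proof of Proposition \ref{p:howe} shows that its constant depends only on $\|v\|, \|w\|$ and on the smallest $s$ with $v, w \in \cH_s = \operatorname{Im}(P_{\Omega_s})$; since $\Omega_s$ is rotation-invariant, the equivariance relation \eqref{eq:equiv} gives $\pi(K_S) \cH_s \subseteq \cH_s$, so the same $s$ works throughout the $K_S$-orbit.

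With a uniform bound in hand, integrability becomes an elementary one-variable estimate. For $p = 2 + \epsilon$, the Cartan-coordinate Haar integral gives
\[
\int_S |\langle \pi(g) v, w \rangle|^p\, dg \le (2\pi)^2 C^p \int_1^\infty t^{-p} (t^2 - t^{-2}) \frac{dt}{t} \le (2\pi)^2 C^p \int_1^\infty t^{1-p}\, dt,
\]
and the last integral converges precisely when $p > 2$. Since $\epsilon > 0$ is arbitrary, this completes the argument.

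The main (and essentially only) obstacle is the uniformity of the constant over the $K_S \times K_S$ factors of the Cartan decomposition; once that is secured the integration itself is pure bookkeeping. What makes the uniformity essentially free is that the spectral ingredients of Proposition \ref{p:howe} --- the annuli $\Omega_s$ and the projectors $P_{\Omega_s}$ --- are $K_S$-equivariant, so applying $\pi(K_S)$ to $v$ and $w$ perturbs none of the quantities entering the estimate.
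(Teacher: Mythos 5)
Your proof is correct and follows the same route the paper takes: Cartan decomposition, the $K_SA_SK_S$ Haar-measure formula, the $t^{-1}$ bound from Proposition \ref{p:howe}, and the elementary observation that $\int_1^\infty t^{1-p}\,dt<\infty$ for $p>2$; the paper states the corollary as an immediate consequence without spelling out these steps. Your care over the uniformity in the $K_S\times K_S$ factors --- reducing to $K_S$-eigenvectors, for which the modulus of the matrix coefficient depends only on the $A_S$-component, or alternatively noting that the constant in the proposition depends only on the annulus parameter $s$ and that $\mathcal{H}_s$ is $K_S$-invariant by \eqref{eq:equiv} --- is precisely the detail the paper leaves implicit, and either of your two arguments closes it correctly.
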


We say that a unitary representation $\pi$ of a group $G$ is {\it $L^p$-integrable}
if the matrix coefficients $g\mapsto \left<\pi(g)v,w\right>$, $g\in G$,
belong to $L^p(G)$ for vectors $v,w$ from a dense subset.

\medskip

The following result concerns representations of a general locally compact group $G$.
We define the {\it regular representation} $\lambda_G$  on $L^2(G)$ by
$$
\lambda_G(g)\phi(x)=\phi(xg)\quad \hbox{for $\phi\in L^2(G).$}
$$

\begin{Prop}\label{p:l2}
Let $\rho$ be a unitary representation of $G$ on a Hilbert space $\mathcal{H}$.
We assume that the functions $g\mapsto \left<\rho(g)v,w\right>$	
 belong to $L^2(G)$ for vectors $v,w$ belonging to a dense subspace of $\mathcal{H}$.
 Then there exists an isometric embedding 
 $$
 \mathcal{I}:\mathcal{H}\to \oplus_{n\ge 1} L^2(G)
 $$
 such that
 for $g\in G$, we have 
 \[\xymatrixcolsep{6pc}\xymatrix{
 	\mathcal{H} \ar[d]_{\rho(g)} \ar[r]^{\mathcal{I}} & {\bigoplus}_{n\ge 1} L^2(G) \ar[d]^{\oplus_{n\ge 1}\lambda_G(g)}\\
 	\mathcal{H} \ar[r]^{\mathcal{I}} & \bigoplus_{n\ge 1} L^2(G)
 }\]
 \end{Prop}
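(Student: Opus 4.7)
My plan is to decompose $\mathcal{H}$ as a countable orthogonal sum of cyclic $G$-invariant subspaces and embed each cyclic piece isometrically and equivariantly into its own copy of $L^2(G)$, then assemble these maps into $\mathcal{I}$.

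For the decomposition, I rely on separability of $\mathcal{H}$: the dense subspace $\mathcal{D}\subset \mathcal{H}$ of vectors with $L^2(G)$ matrix coefficients may be taken $G$-invariant (right translation preserves the $L^2$-integrability of matrix coefficients, up to a modular-function factor that is harmless for our purposes), so an iterative Gram--Schmidt-type procedure produces a sequence $v_1,v_2,\ldots\in \mathcal{D}$ with mutually orthogonal cyclic subspaces $\mathcal{H}_n:=\overline{\mathrm{span}\{\rho(g)v_n:g\in G\}}$ spanning $\mathcal{H}$: at each step one selects $v_n\in \mathcal{D}$ in the orthocomplement of the subspaces already constructed.

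For each $n$, the natural candidate for $\mathcal{I}_n$ is the matrix-coefficient map
\[
S_n(w)(g):=\langle \rho(g)w,v_n\rangle,
\]
initially defined on the dense subspace $\mathrm{span}\{\rho(g)v_n\}$ of $\mathcal{H}_n$. The intertwining relation $S_n(\rho(h)w)(g)=\langle \rho(gh)w,v_n\rangle=S_n(w)(gh)=\lambda_G(h)S_n(w)(g)$ is immediate. The operator $S_n$ is closable: if $w_k\to 0$ in $\mathcal{H}_n$ and $S_nw_k\to f$ in $L^2(G)$, pairing against any $\phi\in C_c(G)$ gives $\langle f,\phi\rangle_{L^2}=\lim_k\langle \rho(\overline{\phi})w_k,v_n\rangle=0$ by boundedness of $\rho(\overline{\phi})$; and it is injective, since $S_nw\equiv 0$ places $w$ perpendicular to every $\rho(g)v_n$ and hence forces $w=0$ by cyclicity. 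Applying the polar decomposition for closed densely-defined operators to $\overline{S_n}=U_n|S_n|$, the positive self-adjoint $|S_n|=(\overline{S_n}^*\overline{S_n})^{1/2}$ is injective (its kernel coincides with that of $\overline{S_n}$) and commutes with $\rho|_{\mathcal{H}_n}$ (since $\overline{S_n}^*\overline{S_n}$ does, by the intertwining of $\overline{S_n}$ and its adjoint), so its range is dense in $\mathcal{H}_n$. Consequently the partial isometry $U_n:\mathcal{H}_n\to L^2(G)$ has initial space equal to $\mathcal{H}_n$, is therefore a genuine isometry, and by uniqueness of polar decomposition inherits the intertwining $U_n\circ\rho(h)=\lambda_G(h)\circ U_n$. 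Setting $\mathcal{I}:=\bigoplus_n U_n$ produces the claimed embedding.

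The main obstacle is executing the polar-decomposition step carefully for the possibly unbounded closed operator $\overline{S_n}$: one must verify (i)~that $|S_n|$ commutes with $\rho|_{\mathcal{H}_n}$ in the functional-calculus sense from the commutation of $\overline{S_n}^*\overline{S_n}$ with $\rho$, (ii)~that the partial isometry $U_n$ extracted from the polar decomposition still intertwines the group actions, and (iii)~that injectivity of $\overline{S_n}$ really upgrades $U_n$ from a partial isometry to an isometric embedding on all of $\mathcal{H}_n$. Each of these is standard once formulated precisely, so the remaining work is bookkeeping to assemble the cyclic embeddings into the single map $\mathcal{I}$.
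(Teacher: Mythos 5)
Your route --- reduce to cyclic pieces, form the densely defined matrix-coefficient operator $S_n$, and extract an isometric intertwiner from its polar decomposition --- is the classical Godement / Cowling--Haagerup--Howe argument and is genuinely different from the paper's. The paper instead fixes an orthonormal basis $\mathcal{H}_0$ of square-integrable vectors, sets $\mathcal{I}w=(f_{v,w})_{v\in\mathcal{H}_0}$ with $f_{v,w}(x)=\langle v,\rho(x)w\rangle$, and deduces isometry from $\sum_{v\in\mathcal{H}_0}\|f_{v,w}\|_2^2=\|w\|^2$. That identity cannot hold for noncompact $G$: Parseval gives $\sum_{v\in\mathcal{H}_0}|f_{v,w}(x)|^2=\|\rho(x)w\|^2=\|w\|^2$ at each fixed $x$, so the sum of the $L^2$-norms equals $\|w\|^2\cdot m(G)=\infty$. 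Your instinct to manufacture the isometry from the uniqueness of the polar decomposition, rather than from a direct norm computation, is exactly what makes the statement come out right.

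There is, however, a real gap in your reduction to the cyclic case. You assert that a Gram--Schmidt procedure produces $v_1,v_2,\ldots\in\mathcal{D}$ (the $\rho$-invariant dense subspace of square-integrable vectors) with mutually orthogonal cyclic subspaces $\mathcal{H}_n$ spanning $\mathcal{H}$, by picking each $v_n\in\mathcal{D}$ inside the orthocomplement $K_{n-1}$ of the earlier $\mathcal{H}_m$'s. For this, $\mathcal{D}\cap K_{n-1}$ must be nonzero whenever $K_{n-1}\neq\{0\}$, and density of $\mathcal{D}$ alone does not give this: a dense subspace can meet an infinite-dimensional closed subspace only in $\{0\}$ (e.g.\ the span of the Hermite functions versus $L^2([0,\infty))$ in $L^2(\mathbb{R})$), and $\rho$-invariance of $\mathcal{D}$ does not obviously repair the selection. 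Nor can one simply project a dense sequence $u_n\in\mathcal{D}$ onto $K_{n-1}$: from the identity
$$
\langle\rho(g)u,u\rangle=\langle\rho(g)P_{K}u,P_{K}u\rangle+\langle\rho(g)P_{K^\perp}u,P_{K^\perp}u\rangle
$$
one learns only that a sum of two positive-definite functions lies in $L^2(G)$, not that each summand does, so $P_{K}u$ need not be square-integrable. The fix keeps all your machinery and discards the orthogonalisation: take any countable dense $\{v_n\}\subset\mathcal{D}$, run your cyclic argument to get isometric intertwiners $U_n:\overline{\rho(G)v_n}\to L^2(G)$, extend each $U_n$ by zero on $\overline{\rho(G)v_n}^{\perp}$ (still a bounded intertwiner, since that orthocomplement is $\rho$-invariant), and set $Jw=(2^{-n}U_nw)_{n\ge1}$. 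Then $J:\mathcal{H}\to\bigoplus_{n\ge1}L^2(G)$ is bounded, intertwining, and injective (if $Jw=0$ then $w\perp\overline{\rho(G)v_n}$ for every $n$, hence $w\perp v_n$ for every $n$, hence $w=0$), and one final polar decomposition --- now of a \emph{bounded} operator, so the three points you enumerate become routine --- produces the desired isometric embedding $\mathcal{I}$.
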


\begin{proof}
Let $\mathcal{H}_0$ be a countable orthonormal dense subset of $\mathcal{H}$ such that 
the functions $g\mapsto \left<v,\rho(g)w\right>$ belong to $L^2(G)$ for all $v,w\in \mathcal{H}_0$. 
For $v,w\in \mathcal{H}$, we set
$$
f_{v,w}(x)=\left<v,\rho(x)w\right>,\quad x\in G,
$$
and define the map
$$
\mathcal{I}:\left<\mathcal{H}_0\right> \to {\bigoplus}_{n\ge 1} L^2(G): \, w\mapsto \left(f_{v,w}: v\in\mathcal{H}_0\right).
$$
Since 
$$
\lambda_G(g)f_{v,w}=f_{v,\rho(g)w}\quad\hbox{for $g\in G$ and $v,w\in \mathcal{H}$,}
$$
we conclude that
$$
\mathcal{I}\circ \rho(g)=\big(\oplus_{n\ge 1}\lambda_G(g)\big)\circ \mathcal{I}\quad\hbox{for $g\in G$.}
$$
Moreover, since $\mathcal{H}_0$ forms an orthonormal basis of $\mathcal{H}$,
$$
\sum_{v\in\mathcal{H}_0} \|f_{v,w}\|_{2}^2= \|w\|^2\quad\hbox{for $w\in \mathcal{H}_0$,}
$$
it follows that 
$$
\|\mathcal{I} w\|=\|w\|\quad \hbox{for $w\in \mathcal{H}_0$}.
$$
Hence, one can check that $\mathcal{I}$ extends to an isometric embedding, as required.
\end{proof}	

Although Proposition \ref{p:l2} can not be directly applied to the representation $\pi$ appearing in Proposition \ref{p:howe},
we deduce the following corollary about its tensor square $\pi\otimes \pi$.

\begin{Cor}\label{c:embed}
Let $\pi$ be a unitary representation of $L$ as in Proposition \ref{p:howe}.
Then the representation $(\pi\otimes \pi)|_S$ embeds as 
a suprepresentation of $\bigoplus_{n\ge 1} \lambda_S$.
\end{Cor}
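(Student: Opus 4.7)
The plan is to reduce Corollary \ref{c:embed} directly to Proposition \ref{p:l2}, applied to the representation $(\pi\otimes\pi)|_S$. The only thing that needs to be checked is that the matrix coefficients of this tensor square, on a suitable dense subspace of $\mathcal{H}\otimes\mathcal{H}$, lie in $L^2(S)$.

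First I would exploit boundedness to upgrade the integrability supplied by Corollary \ref{c:integral}. Let $\mathcal{D}\subset\mathcal{H}$ be the dense $K_S$-invariant subspace provided by that corollary. For $v,w\in\mathcal{D}$, the function $f_{v,w}(g)=\langle\pi(g)v,w\rangle$ lies in $L^{2+\epsilon}(S)$ for every $\epsilon>0$, and it is also bounded by $\|v\|\|w\|$ because $\pi$ is unitary. A pointwise bound $|f_{v,w}|^q \le \|v\|^{q-(2+\epsilon)}\|w\|^{q-(2+\epsilon)}\,|f_{v,w}|^{2+\epsilon}$ for any $q\ge 2+\epsilon$ then shows $f_{v,w}\in L^q(S)$ for all such $q$. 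In particular, choosing $q=4$, we have $f_{v,w}\in L^4(S)$.

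Next I would pass to the tensor square. For elementary tensors $v_1\otimes v_2$ and $w_1\otimes w_2$ with $v_i,w_i\in\mathcal{D}$, the matrix coefficient of $(\pi\otimes\pi)|_S$ factors as
\begin{equation*}
\langle(\pi\otimes\pi)(g)(v_1\otimes v_2),\, w_1\otimes w_2\rangle = f_{v_1,w_1}(g)\cdot f_{v_2,w_2}(g),\qquad g\in S.
\end{equation*}
By the Cauchy--Schwarz inequality, the product of two $L^4(S)$ functions lies in $L^2(S)$, so the matrix coefficient is in $L^2(S)$. The algebraic tensor product $\mathcal{D}\otimes\mathcal{D}$ is dense in $\mathcal{H}\otimes\mathcal{H}$, and extending by bilinearity shows that matrix coefficients of $(\pi\otimes\pi)|_S$ on a dense subspace lie in $L^2(S)$.

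Finally, I would invoke Proposition \ref{p:l2} with $G=S$ and $\rho=(\pi\otimes\pi)|_S$ on the Hilbert space $\mathcal{H}\otimes\mathcal{H}$. This produces an isometric embedding into $\bigoplus_{n\ge 1}L^2(S)$ intertwining $(\pi\otimes\pi)|_S$ with $\bigoplus_{n\ge 1}\lambda_S$, which is exactly the claim. The only nontrivial point in the argument is the bootstrap from $L^{2+\epsilon}$ to $L^4$, which is why taking the \emph{tensor square} rather than $\pi$ itself is essential: a single matrix coefficient need not be in $L^2$, but the product of two is.
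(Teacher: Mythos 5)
Your proof is correct and takes essentially the same route as the paper: factor the matrix coefficient of $(\pi\otimes\pi)|_S$ as a product of two $L^4(S)$ matrix coefficients of $\pi$, conclude $L^2(S)$-membership by Cauchy--Schwarz, and invoke Proposition~\ref{p:l2}. The only superfluous step is the interpolation bootstrap from $L^{2+\epsilon}$ to $L^q$: Corollary~\ref{c:integral} already asserts $L^{2+\epsilon}$-integrability \emph{for all} $\epsilon>0$, i.e.\ $L^p$-integrability for every $p>2$, so membership in $L^4(S)$ is immediate and no appeal to boundedness is needed.
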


\begin{proof}
According to Corollary \ref{c:integral}, the functions
$g\mapsto \left<\pi(g)v,w\right>$, $g\in S$, are $L^p$-integrable for all $p>2$
when $v,w$ belongs to a suitable orthonormal basis $\mathcal{H}_0$ of $\mathcal{H}$.
Then it follows from the Cauchy--Schwarz inequality that for 
$v_1,v_2,w_1,w_2\in \mathcal{H}_0$, the functions
$$
\left<(\pi\otimes \pi)(g)(v_1\otimes v_2),w_1\otimes w_2\right>
=\left<\pi(g)v_1,w_1\right>
\left<\pi(g)v_2,w_2\right>
$$
belong to $L^2(S)$. Hence, the claim is implied by Proposition \ref{p:l2}.
\end{proof}

The above result ultimately reduces our original problem 
regarding representations $\pi|_S$ to the study of matrix coefficients
for the regular representation $\lambda_S$. It turns out that the matrix coefficients of the latter representation can be estimated in terms of an explicit
function that we now introduce. In fact, this is true 
for general connected semisimple Lie groups $G$. We 
recall that in this setting, there is the {\it Iwasawa decomposition}
$$
G=UAK,
$$
where $K$ is a maximal compact subgroups of $G$, $A$ is a Cartan subgroup, and
$U$ is the subgroup generated by positive root subgroups.
The invariant measure with respect to the Iwasawa decomposition is given by
\begin{equation}
\label{eq:iwa_measure}
\int_G f(g)\, dg=\int_{U\times A\times K} f(uak)\Delta(a)\, du da dk\quad\hbox{for $f\in C_c(G)$,}
\end{equation}
where $\Delta$ denotes the modular function of the group $UA$, and 
$du$, $da$, and  $dk$ denote the invariant measures on the corresponding factors.
For example, for the group $S=\hbox{SL}_2(\R)$, using the notation \eqref{eq:notation},
we have the the Iwasawa decomposition $S=U_SA_SK_S$, and the modular function is
given by $\Delta(a(t))=t^{-2}$.
The product map $U\times A\times K\to G$ defines a diffeomorphism,
and for $g\in G$, we denote by ${\sf u}(g)\in G$,  ${\sf a}(g)\in A$, and ${\sf k}(g)\in K$
the unique elements such that 
$$
g={\sf u}(g){\sf a}(g){\sf k}(g).
$$
The {\it Harish-Chandra function} is defined as 
$$
\Xi(g)=\int_{K} \Delta ({\sf a}(kg))^{-1/2}\, dk \quad\hbox{ for  $g\in G$.}
$$
It is easy to check that the function $\Xi$ is bi-$K$-invariant.

In the case when $S=\hbox{SL}_2(\R)$, the Harish-Chandra function 
 can be explicitly computed as
$$
\Xi(a(t))=\frac{1}{2\pi}\int_{0}^{2\pi}(t^{-2}\cos^2\theta+ t^2 \sin^2\theta)^{-1/2}\, d\theta.
$$ 
Moreover, one can check for all $\epsilon>0$
\begin{equation}
\label{eq:xi_b}
\Xi(a(t))\ll_\epsilon t^{-1+\epsilon}\quad\hbox{when $t\ge 1$.}
\end{equation}

Surprisingly, it turns out that matrix coefficients of general $K$-eigenfunctions
in $L^2(G)$ can be explicitly estimated in terms of the Harish-Chandra function:

\begin{Prop}
	\label{p:harish}
	For all $K$-eigenfunctions $\phi,\psi\in L^2(G)$,
	$$
	|\left<\lambda_G(g)\phi,\psi\right>|\le \Xi(g)\|\phi\|_2\|\psi\|_2.
	$$
\end{Prop}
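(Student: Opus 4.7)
My plan is to expand $\left<\lambda_G(g)\phi,\psi\right>$ as an integral via the Iwasawa decomposition $G=UAK$ and then apply Cauchy--Schwarz twice---first on the unipotent factor $U$, then on the Cartan factor $A$---in order to extract a factor of $\Delta({\sf a}(kg))^{-1/2}$, which after integration over $K$ is precisely $\Xi(g)$.

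Using \eqref{eq:iwa_measure}, I would first write
$$
\left<\lambda_G(g)\phi,\psi\right>=\int_K\int_A\int_U \phi(uakg)\overline{\psi(uak)}\Delta(a)\,du\,da\,dk.
$$
The hypothesis that $\phi,\psi$ are $K$-eigenfunctions supplies unitary characters $\chi_\phi,\chi_\psi$ of $K$ with $\phi(xk)=\chi_\phi(k)\phi(x)$ and $\psi(xk)=\chi_\psi(k)\psi(x)$. Setting $\phi_0(u,a):=\phi(ua)$ and $\psi_0(u,a):=\psi(ua)$, so that $\|\phi\|_2^2=\int_U\int_A|\phi_0(u,a)|^2\Delta(a)\,du\,da$ and similarly for $\psi$, I would re-Iwasawa the product
$$
uakg \;=\; u\bigl(a\,{\sf u}(kg)\,a^{-1}\bigr)\cdot a\,{\sf a}(kg)\cdot {\sf k}(kg),
$$
using that $A$ normalises $U$. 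The phase $\chi_\phi({\sf k}(kg))$ that emerges has modulus one and drops out when passing to absolute values, so the problem reduces to estimating, for each $k$, a bilinear form in $\phi_0(u\cdot a{\sf u}(kg)a^{-1},a{\sf a}(kg))$ and $\overline{\psi_0(u,a)}$.

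The heart of the argument is the two successive Cauchy--Schwarz inequalities. After the Haar translation $u\mapsto u(a{\sf u}(kg)a^{-1})^{-1}$ in $U$, a first Cauchy--Schwarz in $u$ annihilates the twisted argument of $\psi_0$ by translation invariance and yields $\|\phi_0(\cdot,a{\sf a}(kg))\|_U\|\psi_0(\cdot,a)\|_U$. A second Cauchy--Schwarz in $a\in A$, combined with the substitution $a'=a\,{\sf a}(kg)$ on the abelian group $A$---under which $\Delta(a)=\Delta(a')\Delta({\sf a}(kg))^{-1}$---converts the first norm into $\Delta({\sf a}(kg))^{-1/2}\|\phi\|_2$ and the second into $\|\psi\|_2$. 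Integrating the resulting pointwise estimate over $K$ produces the Harish-Chandra function $\Xi(g)$ on the nose.

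The only real obstacle is bookkeeping: the conjugate $a\,{\sf u}(kg)\,a^{-1}$ depends on $a$, so I would need to verify that the $U$-translation absorbs this conjugate fully and leaves an integrand whose dependence on $u$ and $a$ has decoupled sufficiently for the two Cauchy--Schwarz steps to operate independently. Once the decoupling is checked, no Jacobian other than the decisive factor $\Delta({\sf a}(kg))^{-1}$ appears, and its square root is exactly what is needed to produce $\Xi(g)$.
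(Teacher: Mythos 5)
Your proposal is correct and follows essentially the same route as the paper's proof: Iwasawa decomposition of the inner product, re-Iwasawa of the shifted argument using the $A$-normalisation of $U$, use of the $K$-eigenfunction hypothesis to neutralise the ${\sf k}$-factor, Haar translation in $U$ and $A$ to produce the Jacobian $\Delta({\sf a}(kg))^{-1}$, and Cauchy--Schwarz to isolate the $L^2$ norms. The only (cosmetic) differences are that the paper first reduces to $g\in A$ via the Cartan decomposition and bi-$K$-invariance of $\Xi$, replaces $\phi,\psi$ by $|\phi|,|\psi|$ rather than carrying the unimodular phases $\chi_\phi({\sf k}(kg))\overline{\chi_\psi(k)}$, and applies Cauchy--Schwarz once on the joint $U\times A$ integral rather than successively on $U$ and then $A$; these choices yield the same estimate.
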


The following argument is a version of Herz's majoration principle \cite{herz}.

\begin{proof}[Proof of Proposition \ref{p:harish}]
Replacing $\phi$ and $\psi$ by $|\phi|$ and $|\psi|$, we may assume without loss
of generality that $\phi,\psi\ge 0$, and the functions $\phi$ and $\psi$ are $K$-invariant.
Then because of the Cartan decomposition $G=KAK$, it is sufficient to prove the estimate when $g=a\in A$.
Using the decomposition of the invariant measure on $G$ given by \eqref{eq:iwa_measure},
we obtain that
\begin{align*}
\left<\lambda_G(a)\phi,\psi\right>= \int_G \phi(ga)\psi(g)\, dg
=\int_{U\times A\times K} \phi(ubka)\psi(ubk)\Delta(b)\, dudbdk.
\end{align*}
Then by the Cauchy--Schwarz inequality,
$$
\left<\lambda_G(a)\phi,\psi\right>\le \int_K 
\left(\int_{U\times A} \phi^2(ubka)\Delta(b)dudb\right)^{1/2}
\left(\int_{U\times A} \psi^2(ubk)\Delta(b)dudb\right)^{1/2}\, dk.
$$
Using that $\psi$ is $K$-invariant, we obtain that
\begin{align*}
\int_{U\times A} \psi^2(ubk)\Delta(b)dudb
=\int_{U\times A\times K} \psi^2(ubk)\Delta(b)dudbdk=\|\psi\|_2^2.
\end{align*}
To estimate the other term, we write
\begin{align*}
ubka=ub\cdot {\sf u}(ka){\sf a}(ka){\sf k}(ka)=ub{\sf u}(ka)b^{-1}\cdot b{\sf a}(ka)\cdot {\sf k}(ka).
\end{align*}
Since $\phi$ is $K$-invariant, using the invariance of the integrals, we deduce that
\begin{align*}
&\int_K 
\left(\int_{U\times A} \phi^2(ubka)\Delta(b)dudb\right)^{1/2}
 dk\\
 =&
\int_K 
\left(\int_{U\times A} \phi^2(ub{\sf u}(ka)b^{-1}\cdot b{\sf a}(ka))\Delta(b)dudb\right)^{1/2}
 dk\\
=&
  \int_K 
  \left(\int_{U\times A} \phi^2(u\cdot b)\Delta(b{\sf a}(ka)^{-1})dudb\right)^{1/2}
  dk\\
 =& \left(\int_K \Delta({\sf a}(ka))^{-1/2}dk\right)
\left(\int_{U\times A} \phi^2(ub)\Delta(b)dudb\right)^{1/2}.
\end{align*}
Finally, because of the $K$-invariance of $\phi$,
\begin{align*}
\int_{U\times A} \phi^2(ub)\Delta(b)dudb
=\int_{U\times A\times K} \phi^2(ubk)\Delta(b)dudbdk=\|\phi\|_2^2
\end{align*}
so that
\begin{align*}
\int_K 
\left(\int_{U\times A} \phi^2(ubka)\Delta(b)dudb\right)^{1/2}
dk=\Xi(a)\|\phi\|_2.
\end{align*}
This implies the required estimate.
\end{proof}	

Using Proposition \ref{p:harish}, we deduce our main result about 
representations of the group $L=\hbox{\rm SL}_2(\R)\ltimes \R^2$:

\begin{Theorem}\label{thy:semi}
	Let $\pi$ be a unitary representation of $L=\hbox{\rm SL}_2(\R)\ltimes \R^2$ on a Hilbert space $\mathcal{H}$
	such that there is no nonzero $\R^2$-invariant vectors.
	Then for all elements $g\in \hbox{\rm SL}_2(\R)$ and all $\hbox{\rm SO}(2)$-finite vectors $v,w\in \mathcal{H}$,
	$$
	|\left<\pi(g)v,w\right>|\le d_{\hbox{\tiny\rm SO}(2)}(v)^{1/2}d_{\hbox{\tiny\rm SO}(2)}(w)^{1/2} \|v\|\|w\|\, \Xi(g)^{1/2}.
	$$
\end{Theorem}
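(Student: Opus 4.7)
The plan is to pass to the tensor square $\pi\otimes\pi$ in order to reduce the problem to an estimate on matrix coefficients of the regular representation $\lambda_S$, to which the Harish-Chandra majoration from Proposition~\ref{p:harish} applies. The starting observation is the elementary identity
\[
\left<\pi(g)v,w\right>^2=\left<(\pi\otimes\pi)(g)(v\otimes v),\, w\otimes w\right>\quad\hbox{for $g\in S$,}
\]
which transfers the problem onto the tensor square. Since by hypothesis $\pi$ has no nonzero $\R^2$-invariant vectors, Corollary~\ref{c:embed} furnishes an isometric $S$-equivariant embedding $\mathcal{I}\colon \mathcal{H}\otimes\mathcal{H}\to \bigoplus_{n\ge 1}L^2(S)$ intertwining $(\pi\otimes\pi)|_S$ with $\bigoplus_n \lambda_S$. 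Writing $\mathcal{I}(v\otimes v)=(F_n)$ and $\mathcal{I}(w\otimes w)=(G_n)$ with $\sum_n\|F_n\|_2^2=\|v\|^4$ and $\sum_n\|G_n\|_2^2=\|w\|^4$, this yields
\[
\left<\pi(g)v,w\right>^2=\sum_n \left<\lambda_S(g)F_n,G_n\right>.
\]

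Next I decompose everything into diagonal $K_S$-isotypic components. Since $K_S=\hbox{SO}(2)$ is abelian, its irreducible representations are one-dimensional characters, so I may write $v=\sum_{i=1}^{d_{K_S}(v)} v_i$ with $\pi(k(\theta))v_i=e^{i\lambda_i \theta}v_i$. Then $v\otimes v=\sum_{i,j}v_i\otimes v_j$, where $v_i\otimes v_j$ transforms under the diagonal $K_S$-action by the character $e^{i(\lambda_i+\lambda_j)\theta}$; grouping by distinct values of $\lambda_i+\lambda_j$ expresses $v\otimes v$ as an orthogonal sum of at most $d_{K_S}(v)^2$ diagonal-$K_S$-eigenvectors $(v\otimes v)_\alpha$, and analogously $w\otimes w=\sum_\beta(w\otimes w)_\beta$ has at most $d_{K_S}(w)^2$ components. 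Because $\mathcal{I}$ is $K_S$-equivariant, applying it componentwise produces corresponding decompositions $F_n=\sum_\alpha F_{n,\alpha}$ and $G_n=\sum_\beta G_{n,\beta}$ in which, for each fixed $\alpha$, the sequence $(F_{n,\alpha})_n$ is the $\mathcal{I}$-image of $(v\otimes v)_\alpha$, and each $F_{n,\alpha}$ is a genuine $K_S$-eigenfunction in $L^2(S)$ (and likewise for $G_{n,\beta}$).

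To finish, I apply Proposition~\ref{p:harish} to each pair $(F_{n,\alpha},G_{n,\beta})$ and sum, using Cauchy--Schwarz twice. For fixed $\alpha,\beta$, summing the Harish-Chandra bound over $n$ and invoking Cauchy--Schwarz in the $n$-variable together with the isometry of $\mathcal{I}$ gives
\[
\sum_n \left|\left<\lambda_S(g)F_{n,\alpha},G_{n,\beta}\right>\right|\le \Xi(g)\,\|(v\otimes v)_\alpha\|\,\|(w\otimes w)_\beta\|.
\]
Summing over $\alpha,\beta$ and invoking a second Cauchy--Schwarz, using the cardinality bounds at most $d_{K_S}(v)^2$ on the indices $\alpha$ and at most $d_{K_S}(w)^2$ on the indices $\beta$, yields $\sum_\alpha\|(v\otimes v)_\alpha\|\le d_{K_S}(v)\|v\|^2$ and its analogue for $w$; combining everything produces
\[
|\left<\pi(g)v,w\right>|^2\le \Xi(g)\,d_{K_S}(v)\,d_{K_S}(w)\,\|v\|^2\|w\|^2,
\]
and extracting a square root completes the proof. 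The step that needs most care is the bookkeeping of $K_S$-types through the tensor square and through $\mathcal{I}$, together with the observation that the combinatorial factor $d_{K_S}(v)^2 d_{K_S}(w)^2$ arising from the tensoring is exactly halved by the final square root, producing the claimed exponents $1/2$.
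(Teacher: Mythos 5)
Your proof is correct and follows essentially the same route as the paper's: tensor square, the embedding of Corollary~\ref{c:embed} into $\bigoplus_n\lambda_S$, Herz majoration (Proposition~\ref{p:harish}), and Cauchy--Schwarz over $K_S$-components, yielding the same constant. The one small organizational difference is that the paper first reduces to the case where $v,w$ are $K_S$-eigenvectors (so that $v\otimes v$, $w\otimes w$ are automatically diagonal $K_S$-eigenvectors and the estimate $|\left<\pi(g)v,w\right>|\le\|v\|\|w\|\,\Xi(g)^{1/2}$ drops out directly) and then handles general $K$-finite vectors by a triangle-inequality and Cauchy--Schwarz argument back in $\mathcal{H}$, whereas you tensor the general $K$-finite vectors first and then decompose $v\otimes v$ into diagonal $K_S$-eigenvectors inside $\mathcal{H}\otimes\mathcal{H}$; as you observe, the resulting factor of $d_{K_S}(v)^2 d_{K_S}(w)^2$ is halved by the final square root, recovering the exponents $1/2$. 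Both bookkeeping schemes are valid and produce the identical bound.
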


More general results giving quantitative bounds for representations
of semidirect products have been established by Oh \cite{oh2} and Wang \cite{wang}. 

\medskip

In relation to Proposition \ref{p:harish}, we mention that
Cowling, Haagerup, and Howe \cite{chh} discovered that the bound in Proposition \ref{p:harish}
holds more generally for any representation which is $L^{2+\epsilon}$-integrable
for all $\epsilon>0$:

\begin{theo}
	\label{th:chh}
	Let $G$ be a semisimple real algebraic group and $\pi$
	a unitary representation of $G$ on a Hilbert space $\mathcal{H}$
	which is $L^{2+\epsilon}$-integrable for all $\epsilon>0$.
	Then for all elements $g\in G$ and all $K$-finite vectors $v,w\in \mathcal{H}$,
	$$
	|\left<\pi(g)v,w\right>|\le d_K(v)^{1/2}d_K(w)^{1/2} \|v\|\|w\|\, \Xi(g).
	$$
\end{theo}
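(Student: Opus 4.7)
The plan is to reduce the estimate to Proposition~\ref{p:harish} by embedding $\pi$ (after an appropriate tensor construction) into the regular representation via Proposition~\ref{p:l2}. The argument splits naturally into an ``$L^{2}$-integrable template'' and a tensor-square reduction that brings the $L^{2+\epsilon}$ hypothesis into that template.

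First I would prove the bound for a genuinely $L^{2}$-integrable representation $\rho$ of $G$. By Proposition~\ref{p:l2}, such a $\rho$ admits a $G$-equivariant isometric embedding $\mathcal{I}\colon\mathcal{H}\hookrightarrow\bigoplus_{n\ge 1}L^{2}(G)$ intertwining $\rho$ with $\oplus\lambda_{G}$. Since $\mathcal{I}$ is in particular $K$-equivariant, a $K$-eigenvector $v$ of $\rho$ maps to a sequence $\mathcal{I}v=(f_{i})_{i\ge 1}$ each of whose components is a $K$-eigenfunction in $L^{2}(G)$ of the same weight, and similarly for $\mathcal{I}w=(h_{i})_{i\ge 1}$. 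Proposition~\ref{p:harish} applied componentwise, together with Cauchy--Schwarz in the index $i$, yields
\[
\left|\left<\rho(g)v,w\right>\right|=\left|\sum_{i}\left<\lambda_{G}(g)f_{i},h_{i}\right>\right|\le \Xi(g)\,\|v\|\,\|w\|
\]
for $K$-eigenvectors. For general $K$-finite $v,w$, decompose into $K$-isotypic components $v=\sum_{j}v_{j}$, $w=\sum_{k}w_{k}$ with at most $d_{K}(v)$ and $d_{K}(w)$ orthogonal summands, apply the previous bound to each pair $(v_{j},w_{k})$, and Cauchy--Schwarz once more in $j$ and $k$ to produce the dimension factors $d_{K}(v)^{1/2}d_{K}(w)^{1/2}$.

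Second I would reduce the $L^{2+\epsilon}$ hypothesis to the $L^{2}$-integrable case by passing to $\pi\otimes\pi$. Matrix coefficients of $\pi\otimes\pi$ on pure tensors factor as products of two matrix coefficients of $\pi$; H\"older's inequality places such a product in $L^{(2+\epsilon)/2}(G)$, and together with the trivial pointwise bound by $\|v\|^{2}\|w\|^{2}$ this forces them into $L^{2}(G)$. By linearity and density this extends to a dense $K$-invariant subspace of $\mathcal{H}\otimes\mathcal{H}$ on which the hypothesis of Proposition~\ref{p:l2} holds, so the template of the previous paragraph applies to $\pi\otimes\pi$. Specialising $V=v\otimes v$ and $W=w\otimes w$, and using the trivial estimate $d_{K}(v\otimes v)\le d_{K}(v)^{2}$, converts the bound on $\pi\otimes\pi$ into a bound on $|\left<\pi(g)v,w\right>|^{2}$, from which taking a square root yields the desired inequality.

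The main obstacle is the precise $\Xi$-exponent. The tensor-square reduction outlined above naturally yields a bound of the shape $d_{K}(v)^{1/2}d_{K}(w)^{1/2}\|v\|\|w\|\,\Xi(g)^{1/2}$, whereas Theorem~\ref{th:chh} asserts the sharp exponent $\Xi(g)$. Bridging this gap is the technical heart of the Cowling--Haagerup--Howe argument: one either iterates the tensor trick with $\pi^{\otimes n}$ and refines the estimate by interpolation as $n$ grows, exploiting the strengthened integrability of higher tensor powers, or invokes a weak-containment/approximation step placing $\pi$ in the Fell closure of $L^{2}$-integrable subrepresentations so that the sharp $\Xi$-bound transfers from the $L^{2}$ template to the limit.
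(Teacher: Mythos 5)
This theorem is one of the starred ``Theorem$^*$'' statements that the paper explicitly declines to prove, citing Cowling--Haagerup--Howe \cite{chh}; there is no in-paper argument to compare against, so your attempt has to stand on its own.

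As written it does not close. Your first two paragraphs faithfully reconstruct the route the paper \emph{does} carry out (Proposition~\ref{p:l2} applied after the tensor-square reduction of Corollary~\ref{c:embed}, then Proposition~\ref{p:harish}), and you correctly observe that this yields $|\left<\pi(g)v,w\right>|\le d_K(v)^{1/2}d_K(w)^{1/2}\|v\|\|w\|\,\Xi(g)^{1/2}$ --- which is exactly the paper's Theorem~\ref{thy:semi}, not Theorem~\ref{th:chh}. The final paragraph then proposes two ways to upgrade $\Xi^{1/2}$ to $\Xi$, but neither works as stated. Iterating the tensor construction with $\pi^{\otimes n}$ goes in the \emph{wrong} direction: $\pi^{\otimes n}$ still only embeds in a multiple of $\lambda_G$, so Proposition~\ref{p:harish} applied to $V=v^{\otimes n}$, $W=w^{\otimes n}$ gives $|\left<\pi(g)v,w\right>|^{n}\le\|v\|^{n}\|w\|^{n}\,\Xi(g)$, i.e.\ $|\left<\pi(g)v,w\right>|\le\|v\|\|w\|\,\Xi(g)^{1/n}$, and since $\Xi\le 1$ this bound \emph{weakens} as $n$ grows rather than converging to the target. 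The weak-containment suggestion is the right idea in spirit, but what you wave away in a clause is the actual content of the Cowling--Haagerup--Howe theorem: one must first prove that $L^{2+\epsilon}$-integrability for all $\epsilon>0$ forces $\pi$ to be weakly contained in $\lambda_G$, and only then transfer the pointwise $\Xi$-bound (a condition stable under weak limits of positive-definite functions) from the regular representation to $\pi$. That implication is a different argument from the $L^2$-embedding of Proposition~\ref{p:l2}, and your proposal does not supply it. In short, what you have proved is the weaker $\Xi^{1/2}$ estimate of Theorem~\ref{thy:semi}; the sharp exponent remains open in your write-up.
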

In view of Corollary \ref{c:integral}, Theorem \ref{th:chh}
applies to the setting of Theorem \ref{thy:semi} and implies a bound which
is essentially optimal in terms of the decay rate along $G$.

\begin{theo}\label{thy:semi1}
	Let $\pi$ be a unitary representation of $L=\hbox{\rm SL}_2(\R)\ltimes \R^2$ on a Hilbert space $\mathcal{H}$
	such that there is no nonzero $\R^2$-invariant vectors.
	Then for all elements $g\in \hbox{\rm SL}_2(\R)$ and all $\hbox{\rm SO}(2)$-finite vectors $v,w\in \mathcal{H}$,
	$$
	|\left<\pi(g)v,w\right>|\le d_{\hbox{\tiny\rm SO}(2)}(v)^{1/2}d_{\hbox{\tiny\rm SO}(2)}(w)^{1/2} \|v\|\|w\|\, \Xi(g).
	$$
\end{theo}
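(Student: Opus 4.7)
The plan is to combine Corollary \ref{c:integral} with Theorem \ref{th:chh}, since the improvement from $\Xi(g)^{1/2}$ in Theorem \ref{thy:semi} to $\Xi(g)$ in Theorem \ref{thy:semi1} is exactly the gain that the Cowling--Haagerup--Howe majoration provides over the elementary argument based on Proposition \ref{p:harish}.

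First, I would restrict the representation $\pi$ of $L = \hbox{\rm SL}_2(\R) \ltimes \R^2$ to the subgroup $S = \hbox{\rm SL}_2(\R)$ and regard $\pi|_S$ as a unitary representation of the semisimple real algebraic group $S$. The hypothesis that $\pi$ has no nonzero $\R^2$-invariant vectors is the input needed to trigger the integrability estimate of Proposition \ref{p:howe}: vectors in a dense $K_S$-invariant subspace of $\mathcal{H}$ satisfy the pointwise bound $|\langle \pi(a(t))v, w\rangle| \le c(v,w)\, t^{-1}$ for $t \ge 1$.

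Second, I would invoke Corollary \ref{c:integral}, which translates that pointwise bound into the integrability statement that the matrix coefficient functions $g \mapsto \langle \pi(g)v, w\rangle$, $g \in S$, lie in $L^{2+\varepsilon}(S)$ for every $\varepsilon > 0$, for $v, w$ in a dense subspace of $\mathcal{H}$. Hence $\pi|_S$ is an $L^{2+\varepsilon}$-integrable unitary representation of the semisimple real algebraic group $S$ in the sense required by Theorem \ref{th:chh}.

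Third, I would apply Theorem \ref{th:chh} directly to $\pi|_S$, with maximal compact subgroup $K_S = \hbox{\rm SO}(2)$, to conclude that for all $g \in \hbox{\rm SL}_2(\R)$ and all $\hbox{\rm SO}(2)$-finite vectors $v, w \in \mathcal{H}$,
\[
|\langle \pi(g)v, w\rangle| \le d_{\hbox{\tiny\rm SO}(2)}(v)^{1/2}\, d_{\hbox{\tiny\rm SO}(2)}(w)^{1/2}\, \|v\|\, \|w\|\, \Xi(g),
\]
which is exactly the claim. Since Corollary \ref{c:integral} and Theorem \ref{th:chh} are both already available, there is essentially no obstacle; the only thing to keep in mind is the bookkeeping point that the Harish-Chandra function $\Xi$ and the $K$-finiteness dimension $d_K$ appearing in Theorem \ref{th:chh} are taken with respect to the group $S$ and its maximal compact $K_S$, not with respect to the full group $L$.
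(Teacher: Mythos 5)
Your proposal is correct and matches the paper's own indicated argument precisely: the paper notes just before stating Theorem \ref{thy:semi1} that Corollary \ref{c:integral} provides the $L^{2+\epsilon}$-integrability needed to feed $\pi|_S$ into Theorem \ref{th:chh}, and that this yields the stated bound with $\Xi(g)$ in place of $\Xi(g)^{1/2}$. Your bookkeeping remark that $\Xi$ and $d_K$ are taken with respect to $S$ and $K_S=\hbox{SO}(2)$ is exactly the right thing to note.
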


Here we only prove the weaker bound given by Theorem \ref{thy:semi}:
	
\begin{proof}[Proof of Theorem \ref{thy:semi}]
First, we consider the case when $v$ and $w$ are eigenvectors of $K_S=\hbox{SO}(2)$.
We recall that by Corollary \ref{c:embed}, the representation $(\pi\otimes \pi)|_S$,
where $S=\hbox{\rm SL}_2(\R)$,
embeds as a subrepresentation of $\bigotimes_{n\ge 1}\lambda_S$.
Hence, it follows from Proposition \ref{p:harish} that
for $g\in S$,
$$
|\left<\pi(g)v,w\right>|=
|\left<(\pi\otimes \pi)(g)(v\otimes v),w\otimes w\right>|^{1/2}\le  \|v\|\|w\|\,\Xi(g)^{1/2}.
$$
In general, we write $v$ and $w$ as $v=\sum_{i=1}^n v_i$ and $v=\sum_{j=1}^m w_j$,
where $v_i$'s and $w_j$'s are orthogonal $K_S$-eigenvectors. Then for $g\in S$,
\begin{align*}
|\left<\pi(g)v,w\right>|&\le \sum_{i=1}^n\sum_{j=1}^m |\left<\pi(g)v_i,w_j\right>|\le 
\left(\sum_{i=1}^n \|v_i\|\right) \left(\sum_{j=1}^m \|w_j\|\right) \Xi(g)^{1/2}\\
&\le n^{1/2} \left(\sum_{i=1}^n \|v_i\|^2\right)^{1/2}\, m^{1/2}\left(\sum_{j=1}^m \|w_j\|^2\right)^{1/2}\, \Xi(g)^{1/2}\\
&\le d_{K_S}(v)^{1/2}d_{K_S}(w)^{1/2} \|v\|\|w\|\, \Xi(g)^{1/2}.
\end{align*}
This proves the theorem.
\end{proof}

Now we can derive uniform bounds for matrix coefficients of higher-rank simple Lie groups and prove Theorem \ref{th:high-rank}:

\begin{proof}[Proof of Theorem \ref{th:high-rank}]
We give a proof of the theorem for 
$$
G=\hbox{SL}_d(\R)\supset K=\hbox{SO}(d).
$$	
Because of the Cartan decomposition 
$$
G=KAK\quad\hbox{where $A=\{(a_1,\ldots,a_d):\, a_1,\ldots, a_d>0,\,  a_1\cdots a_d=1\}$,}
$$
it is sufficient to prove this estimate when $g=a\in A$.

We consider the subgroup $L=S\ltimes \R^2$,
where $S=\hbox{SL}_2(\R)$, embedded into the top left corner of $G$. It follows from Theorem \ref{th:hm2} that for all $v,w\in \mathcal{H}$,
$$
\left<\pi(g)v,w\right>\to 0\quad \hbox{as $g\to \infty$ in $G$.}
$$
In particular, it follows that there is no nonzero $\R^2$-invariant vectors.
Hence, Theorem \ref{thy:semi} can be applied to the representation $\pi|_L$.
We write $a\in A$ as $a=a'a''$ with
\begin{align*}
a'&=\hbox{diag}\left((a_1/a_2)^{1/2}, (a_2/a_1)^{1/2}, 1,\ldots,1\right),\\
a''&=\hbox{diag}\left((a_1a_2)^{1/2}, (a_1a_2)^{1/2}, a_3,\ldots,a_d\right).
\end{align*}
We note that $a'\in A_S\subset L$ and $a''$ commutes with $S$.
In particular, it commutes with $K_S=\hbox{SO}(2)$.
This implies that the vector $\pi(a'')v$ is $K_S$-finite,
and 
$$
d_{K_S}(\pi(a'')v)\le d_{K_S}(v)\le d_{K}(v).
$$
Hence, we deduce from Theorem \ref{thy:semi} and \eqref{eq:xi_b} that
\begin{align*}
|\left<\pi(a)v,w\right>|&=|\left<\pi(a')\pi(a'')v,w\right>|
\le d_{K_S}(\pi(a'')v)^{1/2}d_{K_S}(w)^{1/2} \|v\|\|w\|\, \Xi(a')^{1/2}\\
&\ll_\epsilon d_{K}(v)^{1/2}d_{K}(w)^{1/2} \|v\|\|w\|\, \left(\frac{a_1}{a_2}\right)^{-1/4+\epsilon}
\end{align*}
for all $\epsilon>0$.
The same argument can be applied to other embeddings of $\hbox{SL}_2(\R)\ltimes \R^2$
into $\hbox{SL}_d(\R)$. This gives the bound
\begin{align*}
|\left<\pi(a)v,w\right>|
&\ll_\epsilon d_{K}(v)^{1/2}d_{K}(w)^{1/2} \|v\|\|w\|\, \left(\max_{i\ne j }\frac{a_i}{a_j}\right)^{-1/4+\epsilon}
\end{align*}
for all $\epsilon>0$, and proves the theorem.
\end{proof}

\medskip

It is useful for applications to have the estimate as Theorem \ref{th:high-rank}
in terms of H\"older norms or Sobolev norms of smooth vectors,
as in the works of Moore \cite{m}, Ratner \cite{rat}, and 
Katok and Spatzier \cite{KS}.
Given a unitary represenation $\pi$ of $G$
on a Hilbert space $\mathcal{H}$, one can also define an action of the Lie algebra
$\hbox{Lie}(G)$ on a dense subspace $\mathcal{V}$ of $\mathcal{H}$ that satisfies
$$
\pi(\mathcal{X})v=\frac{d}{dt}\pi(\exp(t\mathcal{X}))v|_{t=0}\quad \hbox{for $\mathcal{X}\in \hbox{Lie}(G)$ and $v\in\mathcal{V}$.}
$$
We fix an (ordered) basis $(\mathcal{X}_1,\ldots,\mathcal{X}_n)$ of the Lie algebra
$\hbox{Lie}(G)$. Then the Sobolev norm of order $\ell$ is defined as 
\begin{equation}
\label{eq:sobolev}
S_\ell(v)^2=\sum_{(i_1,\ldots,i_\ell)} \left\|\pi(\mathcal{X}_{i_1})\ldots \pi(\mathcal{X}_{i_\ell})v\right\|^2
\end{equation}
With this notation, we prove:

\begin{Theorem}
	\label{th:high-rank2}
	Let $G$ be a (noncompact) connected   simple higher-rank matrix Lie group with finite centre
	and $K$ a maximal compact subgroup of $G$.
	Then there exist $c,\delta,\ell>0$ such that for any unitary representation $\pi$ of 
	$G$ on a Hilbert space $\mathcal{H}$ without nonzero $G$-invariant vectors,
	$$
	|\left<\pi(g)v,w\right>|\le c\,  S_\ell(v) S_\ell(w)\, \|g\|^{-\delta}\quad\hbox{for all $g\in G$ and all $v,w\in \mathcal{V}$}.
	$$
\end{Theorem}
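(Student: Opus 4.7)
The plan is to upgrade Theorem \ref{th:high-rank} from $K$-finite vectors to smooth vectors by decomposing into $K$-isotypic components and absorbing the dimension factors into a Sobolev norm via the Casimir operator of $K$.

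First, by the Peter--Weyl theorem, the restriction $\pi|_K$ yields an orthogonal decomposition $\mathcal{H}=\bigoplus_{\sigma\in\widehat{K}}\mathcal{H}_\sigma$ into $K$-isotypic components. Given $v\in\mathcal{V}$, write $v=\sum_\sigma v_\sigma$ with $v_\sigma\in\mathcal{H}_\sigma$; similarly $w=\sum_\tau w_\tau$. Each $v_\sigma$ is $K$-finite, and since $\mathcal{H}_\sigma\cong \sigma\otimes M_\sigma$ for a multiplicity space $M_\sigma$, one checks that the span of $\pi(K)v_\sigma$ has dimension at most $\dim(\sigma)^2$, so $d_K(v_\sigma)\le \dim(\sigma)^2$. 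Applying Theorem \ref{th:high-rank} to each pair $(v_\sigma,w_\tau)$ gives
$$
|\left<\pi(g)v_\sigma,w_\tau\right>|\le c\,\dim(\sigma)\dim(\tau)\|v_\sigma\|\|w_\tau\|\|g\|^{-\delta},
$$
and summing yields
$$
|\left<\pi(g)v,w\right>|\le c\,\|g\|^{-\delta}\Big(\sum_\sigma\dim(\sigma)\|v_\sigma\|\Big)\Big(\sum_\tau\dim(\tau)\|w_\tau\|\Big).
$$

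Next, I introduce the Casimir operator $\Omega_K$ of $K$, built from an orthonormal basis of $\mathrm{Lie}(K)$ with respect to a bi-invariant metric; it acts on $\mathcal{H}_\sigma$ as a nonnegative scalar $c_\sigma$. By the Cauchy--Schwarz inequality, for any integer $N\ge 1$,
$$
\sum_\sigma\dim(\sigma)\|v_\sigma\|\le \Big(\sum_\sigma\dim(\sigma)^2(1+c_\sigma)^{-2N}\Big)^{1/2}\Big(\sum_\sigma(1+c_\sigma)^{2N}\|v_\sigma\|^2\Big)^{1/2}.
$$
The first factor converges for $N$ sufficiently large: by Weyl's dimension formula, $\dim(\sigma)\ll (1+c_\sigma)^{a}$ for some $a$ depending only on $K$, and the number of $\sigma$ with $c_\sigma\le R$ grows polynomially in $R$ by the Weyl character formula, so the series is finite once $N$ exceeds a constant depending on $\dim K$. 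The second factor equals $\|(I+\Omega_K)^N v\|$, which is dominated by the Sobolev norm $S_{2N}(v)$ because $\Omega_K$ is a polynomial of degree two in $\pi(\mathcal{X}_{i})$ for those basis elements lying in $\mathrm{Lie}(K)$, and each $\pi(\mathcal{X}_i)$ is skew-adjoint and commutes with the functional calculus in the standard way. Choosing $\ell=2N$ with $N$ large enough for convergence, and combining the estimates for $v$ and $w$, yields the claimed bound $|\left<\pi(g)v,w\right>|\le c\,S_\ell(v)S_\ell(w)\|g\|^{-\delta}$ with a possibly larger constant $c$.

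The step that requires the most care is the passage from $(I+\Omega_K)^N v$ to $S_{2N}(v)$: one must check that the basis $(\mathcal{X}_1,\dots,\mathcal{X}_n)$ of $\mathrm{Lie}(G)$ used to define $S_\ell$ in \eqref{eq:sobolev} can be chosen to contain a basis of $\mathrm{Lie}(K)$, and that the skew-adjointness of the operators $\pi(\mathcal{X}_i)$ on smooth vectors makes $\|(I+\Omega_K)^N v\|^2$ expressible as a sum of inner products bounded by $S_{2N}(v)^2$. This is routine but is the technical heart of the reduction; the remainder of the argument is a clean combination of Peter--Weyl, Weyl's dimension formula, and Theorem \ref{th:high-rank}.
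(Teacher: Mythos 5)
Your proof is correct and follows the same overall architecture as the paper's (decompose into $K$-isotypic components, feed the $K$-finite bound from Theorem \ref{th:high-rank} into a Cauchy--Schwarz estimate weighted by powers of the Casimir of $K$, and absorb everything into a Sobolev norm), but there is one genuine improvement worth highlighting. The paper bounds $d_K(v_\sigma)$ by $\dim(\mathcal{H}_\sigma)$ and then invokes Harish-Chandra's admissibility theorem, $\dim(\mathcal{H}_\sigma)\le\dim(\sigma)^2$, which only holds for irreducible $\pi$; this forces the paper to first argue the irreducible case and then invoke a direct-integral decomposition to treat general $\pi$. You instead observe directly that for \emph{any} unitary $\pi$ and any $v_\sigma\in\mathcal{H}_\sigma\cong\sigma\otimes M_\sigma$, expanding $v_\sigma=\sum_{i=1}^{\dim\sigma}e_i\otimes w_i$ in a basis $(e_i)$ of $\sigma$ shows $\pi(K)v_\sigma\subset\sigma\otimes\mathrm{span}(w_1,\dots,w_{\dim\sigma})$, whence $d_K(v_\sigma)\le\dim(\sigma)^2$ with no admissibility or reduction to the irreducible case needed. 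This makes the argument more elementary and entirely self-contained. The rest — the convergence of $\sum_\sigma\dim(\sigma)^2(1+c_\sigma)^{-2N}$ via Weyl's dimension formula and polynomial counting of dominant weights, and the final step relating $\|(I+\Omega_K)^N v\|$ to $S_{2N}(v)$ — matches the paper's treatment, and you correctly flag that the last step (passing from the Casimir power to the Sobolev norm, where lower-order terms appear but the definition of $S_\ell$ in \eqref{eq:sobolev} sums over tuples of length exactly $\ell$) is the technical point requiring care; the paper glosses over the same point.
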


\begin{proof}
The proof will require several more advanced facts about representations of semisimple groups.
We indicate how to complete the proof using these facts when $\pi$ is irreducible.
Then the general case will follow by using the integral decomposition.

We decompose $\mathcal{H}$ as a direct sum of irreducible representations of a maximal compact subgroup of $K$. This gives the decomposition
\begin{equation}
\label{eq:decomp}
\mathcal{H}=\bigoplus_{\sigma\in \hat K} \mathcal{H}_\sigma,
\end{equation}
where $\hat K$ denotes the unitary dual of $K$, and $\mathcal{H}_\sigma$ is the direct sum of 
the irreducible components isomorphic to $\sigma$. There is the Casimir operator
$\mathcal{C}$ of $K$ which 
is the second order differential operator commuting with the action of $K$.
It leaves each of the subspaces $\mathcal{H}_\sigma$ invariant.
Moreover, one can deduce from the Schur Lemma that 
$$
\mathcal{C}|_{H_\sigma}=\lambda_\sigma\, \hbox{id}_{\mathcal{H}_\sigma}
$$
for some $\lambda_\sigma>0$.
The eigenvalues $\lambda_\sigma$ and the dimensions $\dim(\sigma)$ are computed 
explicitly in the Representation Theory of compact groups, and one can verify that 
\begin{align*}
\dim(\sigma)\le \lambda_\sigma^{c_1}\quad\hbox{and}\quad
\sum_{\sigma\in\hat K} \dim(\sigma)^{-c_2}<\infty
\end{align*}
for some $c_1,c_2>0$.
We shall also use a result of Harish-Chandra regarding admissibility of
 irreducible unitary representation (see, for instance, \cite{war1}) that gives the bound
$$
\dim (H_\sigma)\le \dim(\sigma)^2.
$$

Now utilising these estimates, we proceed with the proof of the theorem.
We decompose the vectors with respect to the decomposition
\eqref{eq:decomp} and deduce from Theorem \ref{th:high-rank} that
\begin{align*}
|\left<\pi(g)v,w\right>|&\le \sum_{\sigma,\tau\in \hat K}|\left<\pi(g)v_\sigma,w_\tau\right>|\\
&\ll \left(\sum_{\sigma\in \hat K} \dim(H_\sigma)^{1/2}\|v_\sigma\|\right)
\left(\sum_{\tau\in \hat K} \dim(H_\tau)^{1/2}\|w_\tau\|\right)\, \|g\|^{-\delta}.
\end{align*}
Since the above sums can be estimates as 
\begin{align*}
\sum_{\sigma\in \hat K} \dim(H_\sigma)^{1/2}\|v_\sigma\|
&\le \sum_{\sigma\in \hat K} \dim(\sigma) \lambda_\sigma^{-s}\left\|\pi(\mathcal{C})^s v_\sigma\right\|\\
&\le \sum_{\sigma\in \hat K} \dim(\sigma)^{1-s/c_1} \left\|\pi(\mathcal{C})^s v_\sigma\right\|
\\
&\le \left(\sum_{\sigma\in\hat K} \dim(\sigma)^{-2(s/c_1-1)}\right)^{1/2}
\left(\sum_{\sigma\in\hat K}  \left\| \pi(\mathcal{C})^sv_\sigma\right\|^2\right)^{1/2} \\
&\ll  \left\|\pi(\mathcal{C})^s v\right\|
\end{align*}
for sufficiently large $s$, this implies the theorem. 
\end{proof}

We note that if the assumption that $G$ is of higher rank 
is removed, the statements of Theorems \ref{th:high-rank} and \ref{th:high-rank2} are not true.
Although we know from Theorem \ref{th:hm2} that 
$$
\left<\pi(g)v,w\right>\to 0 \quad \hbox{as $g\to \infty$ in $G$,}
$$
there are unitary representations without invariant vectors 
whose matrix coefficients do not possess explicit estimates.
For example, in the case of $\hbox{SL}_2(\R)$, the complementary series
representations provide examples with arbitrary slow decay rate.
Nonetheless, it is known that for every nontrivial irreducible representation $\pi$, there exists $c(\pi),\delta(\pi)>0$ such that 
$$
|\left<\pi(g)v,w\right>|\le c(\pi)\,  S_\ell(v) S_\ell(w)\, \|g\|^{-\delta(\pi)}.
$$
Moreover, this bound also holds for any unitary representation $\pi$
which is isolated from the trivial representation in the sense of the Fell topology.

\medskip

Theorem \ref{th:high-rank2} can be applied to finite-volume 
homogeneous spaces $X$ of $G$. Indeed, a simple Fubini-type argument 
implies that the corresponding unitary representation of $G$
on $L^2(X)$ has no nonconstant invariant vectors so that
the bound of Theorem \ref{th:high-rank2} can be applied to all 
function in $L_0^2(X)$, which denotes the subspace of functions with zero integral.  Even when $G$ has rank one, one can show that the unitary 
representation of $G$ on $L^2_0(X)$ is isolated from the trivial representation
(see, for instance, \cite[Lemma~3]{bekka})
so that the quantitative bounds on correlations hold in this case as well.

\begin{theo}
	\label{th:high-rank3}
	Let $G$ be a (noncompact) connected simple matrix Lie group with finite centre and $(X,\mu)$ is a probability homogeneous space of $G$.
	Then there exist $\delta,\ell>0$ such that for 
	all elements $g\in G$ and all functions $\phi,\psi\in C_c^\infty(X)$,
	$$
	\int_X \phi(g^{-1}x)\psi(x)\,d\mu(x) =\left(\int_X\phi\, d\mu\right)\left(\int_X\psi\, d\mu\right)+
	O\left( S_\ell(\phi) S_\ell(\psi)\, \|g\|^{-\delta}\right).
	$$
\end{theo}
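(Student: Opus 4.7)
The plan is to reduce to Theorem \ref{th:high-rank2} by separating the mean-zero component of each observable. Write $c_\phi = \int_X \phi\, d\mu$ and $\phi_0 = \phi - c_\phi$, and similarly $c_\psi, \psi_0$. Since $\mu$ is $G$-invariant, $\int_X \phi_0(g^{-1}x)\, d\mu(x) = 0$ for every $g \in G$, and the same holds for $\psi_0$. Expanding the correlation by bilinearity, the two cross terms therefore vanish, and we obtain
$$
\int_X \phi(g^{-1}x)\psi(x)\,d\mu(x) = c_\phi c_\psi + \left\langle \pi(g)\phi_0, \psi_0\right\rangle,
$$
so it suffices to bound $|\langle \pi(g)\phi_0,\psi_0\rangle|$ by $O\bigl(S_\ell(\phi)S_\ell(\psi)\|g\|^{-\delta}\bigr)$.

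The representation $\pi$ of $G$ on $L^2_0(X) = \{f \in L^2(X) : \int f\, d\mu = 0\}$ has no nonzero $G$-invariant vectors, because the transitive action of $G$ on the probability homogeneous space $(X,\mu)$ is ergodic (any $G$-invariant $L^2$-function is constant on $G$-orbits, hence on all of $X$), so that constants exhaust the $L^2$-invariants. In the higher-rank case, Theorem \ref{th:high-rank2} applied to $\pi|_{L_0^2(X)}$ and the smooth vectors $\phi_0,\psi_0$ yields
$$
|\langle \pi(g)\phi_0,\psi_0\rangle| \le c\, S_\ell(\phi_0)\, S_\ell(\psi_0)\, \|g\|^{-\delta}.
$$
It then remains to replace $S_\ell(\phi_0)$ by $S_\ell(\phi)$. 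But constants are annihilated by every operator $\pi(\mathcal{X})$ with $\mathcal{X} \in \operatorname{Lie}(G)$, so $\pi(\mathcal{X}_{i_1})\cdots \pi(\mathcal{X}_{i_\ell})\phi_0 = \pi(\mathcal{X}_{i_1})\cdots \pi(\mathcal{X}_{i_\ell})\phi$ for any $\ell \ge 1$, and the definition \eqref{eq:sobolev} gives $S_\ell(\phi_0) = S_\ell(\phi)$ directly. This completes the argument when $G$ has higher rank.

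The rank-one case requires more care, since Theorem \ref{th:high-rank2} is unavailable and individual irreducible representations can exhibit arbitrarily slow matrix-coefficient decay (as illustrated by the complementary series of $\hbox{SL}_2(\R)$ mentioned in the excerpt). The substitute input is the fact, recorded just before the statement of the theorem and proved in \cite{bekka}, that the representation of $G$ on $L_0^2(X)$ is isolated from the trivial representation in the Fell topology. This spectral gap furnishes uniform constants $c, \delta, \ell > 0$ in the rank-one analogue of Theorem \ref{th:high-rank2} quoted in the excerpt, and after that the same projection-and-Sobolev bookkeeping applies verbatim. The main obstacle in this case is precisely the spectral gap; once it is in hand, passage from the matrix-coefficient bound to the correlation estimate in the stated form is a routine manipulation.
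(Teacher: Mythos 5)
Your proposal is correct and mirrors the brief sketch the paper gives immediately before the statement: decompose each observable into its mean plus a mean-zero part, apply Theorem \ref{th:high-rank2} to the representation on $L_0^2(X)$ (which has no nonzero invariant vectors, by ergodicity of the transitive action), and in the rank-one case invoke the spectral gap of \cite[Lemma~3]{bekka} to supply the missing uniform decay. The only extra detail you add beyond the paper's outline is the observation that $S_\ell(\phi_0)=S_\ell(\phi)$ because the Lie-algebra derivatives annihilate constants, which is a sound and useful bookkeeping remark given the paper's definition \eqref{eq:sobolev}.
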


Maucourant \cite{mauc}
used estimates on the correlations from Theorem \ref{th:high-rank3}
to prove a version of Theorem \ref{th:count} that gives an asymptotic formula for the
number of lattice points with an error term.

\medskip

In conclusion, 
we note that in Theorems \ref{th:high-rank}, \ref{th:high-rank2} and \ref{th:high-rank3},
the matrix norm $\|\cdot\|$
can be estimated in terms of a (left) invariant Riemannian metric $d$ on $G$ as 
$$
e^{c_1 d(g,e)}\le  \|g\|\le e^{c_2 d(g,e)}\quad\hbox{for all $g\in G$.}
$$
In particular,  in Theorem \ref{th:high-rank3}, this gives the error term 
$$
O\left(  S_\ell(\phi) S_\ell(\psi)\, e^{-\delta'd(g,e)}\right)
$$
for some $\delta'>0$.

\section{Bounds on higher-order correlations}\label{sec:cor_gen}

Building on the results from Section \ref{sec:cor_2}, we intend to establish 
quantitative estimates on correlations of arbitrary order. We follow the argument
of Bj\"orklund, Einsiedler, Gorodnik \cite{beg}.
Throughout this section, $G$ denotes a (noncompact) connected  simple matrix Lie group
with finite center. We consider a measure preserving action of $G$ on 
a standard probability space $(X,\mu)$. To simplify notation,
we set
$$
(g\cdot \phi)(x)=\phi(g^{-1}x)\quad\hbox{for $g\in G$ and $\phi\in L^\infty(X)$.}
$$
Our goal is to estimate the correlations
$$
\mu((g_1\cdot \phi_1)\cdots (g_r\cdot \phi_k))=\int_X \phi_1(g_1^{-1}x)\cdots \phi_r(g_k^{-1}x)\, d\mu(x)
$$
for suitable functions $\phi_1,\ldots,\phi_r$ on $X$.
We shall assume that we know how to estimate correlations of order two.
Namely, we assume that there exist a subalgebra $\mathcal{A}$ of $L^\infty(X)$
and  $\delta>0$ such that for all functions $\phi_1,\phi_2\in\mathcal{A}$
 and all $g\in G$,
\begin{equation}
\label{eq:m}
\mu((g\cdot \phi_1)\, \phi_2)=\mu(\phi_1)\mu(\phi_2)+O\big(S_\ell(\phi_1)S_\ell(\phi_2)\, \|g\|^{-\delta}\big),
\end{equation}
where $S_\ell$ denotes a norm on the algebra $\mathcal{A}$.
The precise definition of the family of norms 
$$
S_1\le S_2\le \cdots \le S_\ell\le \cdots
$$
will not be important for our arguments. We shall only use that these norms
 satisfy the following properties:
 \begin{enumerate}
 \item[(${\rm N}_1$)] there exists $\ell_1$ such that
$$
 \|\phi\|_{L^\infty}\ll S_{\ell_1}(\phi),
$$
\item[(${\rm N}_2$)] there exists $\ell_2$ such that 
$$
\|g\cdot \phi-\phi\|_{L^\infty}\ll \|g-e\|\, S_{\ell_2}(\phi)\quad\hbox{for all $g\in G$,}
$$
\item[(${\rm N}_3$)] for all $\ell$, there exists $\sigma_\ell>0$ such that
$$
S_\ell(g\cdot \phi)\ll_\ell \|g\|^{\sigma_\ell} \, S_\ell(\phi)\quad\hbox{for all $g\in G$,}
$$
\item[(${\rm N}_4$)] for every $\ell$, there exists $\ell'$ such that
$$
S_\ell(\phi_1\phi_2)\ll_\ell S_{\ell'}(\phi_1) S_{\ell'}(\phi_2).
$$
\end{enumerate}

\medskip

For instance, when $X=L/\Gamma$ where $L$ is a connected Lie group and $\Gamma$ 
a discrete cocompact subgroup,  it follows from a version of the Sobolev embedding theorem
that the Sobolev norms defined in \eqref{eq:sobolev} satisfy these properties.
More generally, when $\Gamma$ is discrete subgroup of finite covolume,
one can also introduce a family of norms majorating the usual Sobolev norms
satisfying these properties (see, for instance, \cite[\S3.7]{emv}). In particular, it follows from Section \ref{sec:cor_2}
that the bound \eqref{eq:m} holds in this setting.

\medskip

The main result of this section is the following:

\begin{Theorem}	\label{th:cor_high}
For every $r\ge 2$, there exist $\delta_r,\ell_r>0$ such that
for all elements $g_1,\ldots,g_r\in G$ and all functions $\phi_1,\ldots,\phi_r\in \mathcal{A}$,
\begin{align*}
\mu((g_1\cdot \phi_1)\cdots (g_r\cdot \phi_r))=\, & \mu(\phi_1)\cdots \mu(\phi_r)\\
&+O_r\big( S_{\ell_r}(\phi_1)\cdots S_{\ell_r}(\phi_r)\, N(g_1,\ldots,g_r)^{-\delta_r}\big),
\end{align*}
where
$$
N(g_1,\ldots,g_r)=\min_{i\ne j} \|g_i^{-1}g_j\|.
$$
\end{Theorem}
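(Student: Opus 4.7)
The plan is to prove Theorem \ref{th:cor_high} by induction on $r$, the base case $r=2$ being the hypothesis \eqref{eq:m}. For the inductive step, the strategy is to split the product $\prod_i g_i\cdot\phi_i$ into two sub-products of shifted functions whose shifts are well separated, apply \eqref{eq:m} to the resulting pair, and invoke the inductive hypothesis on each sub-product.

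Before starting the induction I would first reduce to the case $\mu(\phi_i)=0$ for every $i$: expanding $\phi_i=(\phi_i-\mu(\phi_i))+\mu(\phi_i)$ term by term, the fully ``mean'' term contributes $\prod_i\mu(\phi_i)$, terms with exactly one non-mean factor vanish by $G$-invariance, and every remaining term is a correlation of between $2$ and $r$ centred functions times a constant bounded, via property $(\mathrm{N}_1)$, by $\prod_j S_{\ell_1}(\phi_j)$. So it suffices to prove
\[
\Bigl|\mu\Bigl(\prod_{i=1}^r g_i\cdot\phi_i\Bigr)\Bigr|\ll_r\prod_{i=1}^r S_{\ell_r}(\phi_i)\cdot N^{-\delta_r}
\]
when $\mu(\phi_i)=0$ for every $i$, where $N=N(g_1,\ldots,g_r)$.

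The splitting is produced by a pigeonhole clustering on $\{g_1,\ldots,g_r\}$. For a large constant $C=C(r,\delta)>1$ to be fixed, I would consider thresholds $T_k=N^{C^k}$ for $k=0,1,\ldots,r$ and the partitions $P_{T_k}$ of $\{1,\ldots,r\}$ into connected components of the graph with edges $\{i,j\}:\|g_i^{-1}g_j\|\le T_k$. Since $P_{T_k}$ only coarsens in $k$ and undergoes at most $r-1$ coarsenings in total, some consecutive pair $(k,k+1)$ satisfies $P_{T_k}=P_{T_{k+1}}$; a short adjustment of the threshold range forces this stable partition $\{1,\ldots,r\}=S\sqcup S^c$ to be non-trivial. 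Submultiplicativity of the matrix norm along chains of length at most $r-1$ then gives $\|g_i^{-1}g_j\|\le T_k^{r-1}$ whenever $i,j$ lie in the same cluster, while $\|g_i^{-1}g_j\|>T_{k+1}=T_k^C$ between clusters. Fixing representatives $g_S^*\in\{g_i\}_{i\in S}$ and $g_{S^c}^*\in\{g_i\}_{i\in S^c}$, I factor
\[
\prod_{i=1}^r g_i\cdot\phi_i=(g_S^*\cdot\Phi_S)(g_{S^c}^*\cdot\Phi_{S^c}),\quad\Phi_S=\prod_{i\in S}\bigl((g_S^*)^{-1}g_i\cdot\phi_i\bigr),
\]
and analogously for $\Phi_{S^c}$. $G$-invariance of $\mu$ rewrites the full correlation as $\mu\bigl(\Phi_S\cdot((g_S^*)^{-1}g_{S^c}^*)\cdot\Phi_{S^c}\bigr)$, and since $\|(g_S^*)^{-1}g_{S^c}^*\|\ge T_{k+1}$, the order-two bound \eqref{eq:m} applied to $(\Phi_S,\Phi_{S^c})$ yields
\[
\mu\Bigl(\prod_i g_i\cdot\phi_i\Bigr)=\mu(\Phi_S)\mu(\Phi_{S^c})+O\bigl(S_\ell(\Phi_S)S_\ell(\Phi_{S^c})\,T_{k+1}^{-\delta}\bigr).
\]
The inductive hypothesis, applied separately to $\mu(\Phi_S)$ and $\mu(\Phi_{S^c})$ (still centred, with pairwise distances among the $(g_S^*)^{-1}g_i$ unchanged hence $\ge N$, and $|S|,|S^c|<r$), controls the main term.

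The main obstacle is the quantitative balance in the error term. Properties $(\mathrm{N}_3)$ and $(\mathrm{N}_4)$ combined with $\|(g_S^*)^{-1}g_i\|\le T_k^{r-1}$ give $S_\ell(\Phi_S)\ll T_k^{A_r}\prod_{i\in S}S_{\ell'}(\phi_i)$ and analogously for $S^c$, for some exponent $A_r>0$ depending on $r$ and $\sigma_{\ell'}$. The decay factor from \eqref{eq:m} is $T_{k+1}^{-\delta}=T_k^{-C\delta}$, so the net exponent of $N$ in the error is $C^k(2A_r-C\delta)$; choosing $C$ with $C\delta>2A_r$ makes this negative and yields an error of the form $O(\prod_iS_{\ell_r}(\phi_i)\cdot N^{-\delta_r})$ for some $\delta_r>0$ depending only on $r$ and $\delta$. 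Combining with the inductive bound on the main term closes the induction and establishes Theorem \ref{th:cor_high}.
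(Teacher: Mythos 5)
Your plan differs fundamentally from the paper's proof, which does not split the product into two translated blocks and hit them with the order-two bound directly; instead it views the correlation as a measure $\eta$ on $X^r$, averages along a carefully chosen one-parameter subgroup $h(t)=(h_I(t),h_J(t))$ via the operator $P_T$ of \eqref{eq:P_T}, and bounds the three-term decomposition \eqref{eq:key} using the estimates (a)--(c), with a pigeonhole on the weights $w_j=\|\mathrm{Ad}(g_j^{-1}g_s)Z\|/\|\mathrm{Ad}(g_1^{-1}g_s)Z\|$ coming from the adjoint action on a nilpotent $Z$. That structure is not cosmetic: it is what makes the argument survive the regime where your clustering has nothing to offer.

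The genuine gap in your proposal is the regime where all the $g_i$ are pairwise comparable, say $N\le\|g_i^{-1}g_j\|\le N^{r-1}$ for all $i\ne j$ (e.g.\ $r=3$ with all three mutual distances equal to $N$). In this case the graph at threshold $T_0=N$ already has an edge, and for any $C>r-1$ the graph at $T_1=N^C$ is complete, so every consecutive pair $(k,k+1)$ with $k\ge 1$ is stable but trivial, while $(0,1)$ is a strict coarsening. No choice of thresholds produces a nontrivial partition $S\sqcup S^c$ whose between-cluster distance exceeds the within-cluster diameter by more than a bounded power of $N$: any nontrivial split then has within-cluster shifts of size up to $N^{r-1}$ and between-cluster separation only $N$, so the Sobolev blow-up $T_k^{A_r}$ from (${\rm N}_3$) is not dominated by the decay $T_{k+1}^{-\delta}$, since $T_{k+1}$ cannot be taken polynomially larger than $T_k$ while still lying below the cluster diameter. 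Your remark that ``a short adjustment of the threshold range forces this stable partition to be non-trivial'' is therefore unjustified and, in this regime, false. A secondary issue is that even when a nontrivial stable partition exists it may have more than two blocks; folding them into $S\sqcup S^c$ destroys the within-block diameter control that your Sobolev bound for $\Phi_S$, $\Phi_{S^c}$ depends on. The paper's weights $w_j$ always span the full range $[q^{-1},1]$ (since $w_1=1$ and $w_r\le Q^{-1}\le q^{-1}$) even when $Q$ and $q$ are comparable, so its pigeonhole never degenerates; replicating that robustness is exactly what the $P_T$-averaging buys, and your approach has no substitute for it.
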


We first explain the strategy of the proof of Theorem \ref{th:cor_high}.
It will be convenient to consider the correlation of order $r$ as a measure on the product
space $X^r$: we introduce a measure $\eta=\eta_{g_1,\ldots,g_r}$ on $X^r$
defined by
$$
\eta(\phi)=\int_X\phi(g_1^{-1}x,\ldots, g_r^{-1}x)\, d\mu(x)\quad\hbox{for $\phi\in L^\infty(X^r)$.}
$$
Theorem \ref{th:cor_high} amounts to showing that 
the measure $\eta$ is ``approximately'' equal  to the product 
measure $\mu^r$ on $X^r$.
Our argument will proceed by induction on the number of factors. 
Let us take a nontrivial partition $\{1,\ldots,r\}=I\sqcup J$
which defines the projection maps $X^r\to X^I$ and $X^r\to X^J$.
We obtain the measures $\eta_I$ and $\eta_J$ on $X^I$ and $X^J$ respectively
that are the projections of the measure $\eta$. Formally, these measure are defined as
\begin{align*}
\eta_I(\phi)&=\eta(\phi\otimes 1)\quad\hbox{for $\phi\in L^\infty(X^I)$,}\\
\eta_J(\psi)&=\eta(1\otimes \psi)\quad\hbox{for $\psi\in L^\infty(X^J)$.}
\end{align*}
Ultimately, our proof will involve comparing the diagrams:

$$
\xymatrixcolsep{1.3pc} \xymatrixrowsep{1.2pc}\
\xymatrix{
&& X^r  \ar@/_1pc/[lldd] \ar@/^1pc/[rrdd]    \\	
&& \eta \ar@/_/[ld] \ar@/^/[rd] \ar@{.>}[u] &&\\
X^I & \eta_I \ar@{.>}[l] && \eta_J \ar@{.>}[r] & X^J
}\;\;
\xymatrix{
	&& X^r  \ar@/_1pc/[lldd] \ar@/^1pc/[rrdd]   && \\	
	&& m^r \ar@/_/[ld] \ar@/^/[rd] \ar@{.>}[u] &&\\
	X^I & m^I \ar@{.>}[l] && m^J \ar@{.>}[r] & X^J
}
$$

We may assume by induction that 
$$
\eta_I\approx \mu^I\quad\hbox{ and } \quad \eta_J\approx \mu^J
$$
and need to show that 
\begin{equation}
\label{eq:approx}
\eta\approx \mu^r= \mu^I\otimes \mu^J.
\end{equation}
To establish this estimate, we use that the measure $\eta$ is invariant under the subgroup
$$
D=\{(g_1^{-1}hg_1,\ldots, g_r^{-1}hg_r): \, h\in G\}.
$$
We take a one-parameter subgroup 
$$
h(t)=(\exp(tZ_1),\ldots,\exp(tZ_r))
$$
in $D$ which also can be written as
$$
h(t)=(h_I(t),h_J(t))
$$
for one-parameter subgroups acting on $X^I$ and $X^J$.
We note that the measure $\eta_I$ is $h_I(t)$-invariant, 
and the measure $\eta_J$ is $h_J(t)$-invariant.
We consider the averaging operator
\begin{equation}
\label{eq:P_T}
P_T:L^\infty(X^I)\to L^\infty(X^I):\, \phi \mapsto \frac{1}{T}\int_0^T \phi(h_I(t)x)\, dt
\end{equation}
that preserves the measure $\eta_I$. Given functions $\phi_1,\ldots,\phi_r\in L^\infty(X)$,
we write
$$
\phi_I=\otimes_{i\in I} \phi_i\quad\hbox{and} \quad \phi_J=\otimes_{i\in J} \phi_i.
$$
To simplify notation, we write $S_\ell(\phi_I)=\prod_{i\in I} S_\ell(\phi_i)$ below.
We establish \eqref{eq:approx} using the following key estimate
\begin{align}
|\eta(\phi_I\otimes \phi_J)-\mu^r(\phi_1\otimes \cdots \phi_r)|=&\;
|\eta(\phi_I\otimes \phi_J)-\mu^I(\phi_I)\mu^J(\phi_J)| \label{eq:key}\\
\le &\;\; |\eta(\phi_I\otimes \phi_J)-\eta(P_T\phi_I\otimes \phi_J)| \tag{I}\\
&\;+|\eta(P_T\phi_I\otimes \phi_J)-\eta_I(\phi_I)\eta_J(\phi_J)| \tag{II}\\
&\;+|\eta_I(\phi_I)\eta_J(\phi_J)-\mu_I(\phi_I)\mu_J(\phi_J)|. \tag{III}
\end{align}
We will estimate the terms (I), (II), and (III) separately for a carefully chosen partition $\{I,J\}$ and
a carefully chosen one-parameter subgroup $h(t)$. To simplify notation,
we shall assume that for all $i=1,\ldots,r$,
$$
S_{\ell'}(\phi_i)\le 1
$$
for a fixed sufficiently large $\ell'$. In particular,
it also follows from properties (${\rm N}_1$) and (${\rm N}_2$) of the norms that 
for all $i=1,\ldots,r$ and $g\in G$,
\begin{equation}
\label{eq:bbound}
\|\phi_i\|_{L^\infty}\ll 1\quad\hbox{and}\quad \|g\cdot \phi_i-\phi_i\|_{L^\infty}\ll \|g-e\|.
\end{equation}

It will be convenient to replace the matrix norm by a different norm defined
in terms of the adjont representation $\hbox{Ad}:G\mapsto \hbox{GL}(\hbox{Lie}(G))$ 
with 
$$
\hbox{Ad}(g):X\mapsto gX g^{-1}\quad\hbox{for $X\in \hbox{Lie}(G)$.}
$$
We fix a norm on the Lie algebra $\hbox{Lie}(G)$ and set
$$
\|g\|=\max\left\{\|\hbox{Ad}(g)Z\|: \|Z\|=1\right\}.
$$
It is not hard to check that for any $g\in G$, 
\begin{align*}
&\|g\|\ge 1\quad\hbox{and}\quad
&\|g\|=\|\hbox{Ad}(g)Z\|\quad\hbox{for some nilpotent $Z$ with $\|Z\|=1$.}
\end{align*}

Now we describe the choice of the one-parameter $h(t)$ subgroup that we use.
Let
$$
Q=\max_{i\ne j} \|g_i^{-1}g_j\|\quad\hbox{and}\quad q=\min_{i\ne j} \|g_i^{-1}g_j\|\ge 1
$$
We take $i_1\ne i_s$ such that 
$$
Q=\|g_{i_1}^{-1}g_{i_s}\|=\|\hbox{Ad}(g_{i_1}^{-1}g_{i_s})Z\|
$$
for some nilpotent $Z$ with $\|Z\|=1$. Then
$$
\|\hbox{Ad}(g_{i_1}^{-1}g_{i_s})Z\|\ge \|\hbox{Ad}(g_{i}^{-1}g_{j})Z\|\quad\hbox{for all $i\ne j$.}
$$
For a suitable choice of indices, we obtain that
$$
\|\hbox{Ad}(g_{i_1}^{-1}g_{i_s})Z\|\ge \|\hbox{Ad}(g_{i_2}^{-1}g_{i_s})Z\|\ge \cdots \ge \|\hbox{Ad}(g_{i_r}^{-1}g_{i_s})Z\|.
$$
In fact, after relabelling, we may assume that
$$
\|\hbox{Ad}(g_{1}^{-1}g_{s})Z\|\ge \|\hbox{Ad}(g_{2}^{-1}g_{s})Z\|\ge \cdots \ge \|\hbox{Ad}(g_{r}^{-1}g_{s})Z\|.
$$
We note that 
$$
\|\hbox{Ad}(g_{r}^{-1}g_{s})Z\|\le \|\hbox{Ad}(g_{s}^{-1}g_{s})Z\|=1.
$$
We set
$$
Z_j=\frac{\hbox{Ad}(g_{j}^{-1}g_{s})Z}{\|\hbox{Ad}(g_{1}^{-1}g_{s})Z\|}\quad\hbox{and}\quad
w_j=\|Z_j\|.
$$
Then
\begin{equation}
\label{eq:w}
1=w_1\ge w_2\ge \cdots \ge w_r\quad\hbox{and}\quad w_r\le Q^{-1}\le q^{-1}
\end{equation}
We take 
$$
I=\{1,\ldots,p\}\quad\hbox{and}\quad I=\{p+1,\ldots,r\},
$$
where the index $p$ will be specified later.

We observe that with these choices, the one-parameter subgroups $h_I(t)$ and $h_J(t)$
satisfy the following properties with some exponents $a,b>0$,
\begin{enumerate}
\item[(a)] $\|h_J(t)\cdot \phi_J -\phi_J\|_{L^\infty} \ll w_{p+1}|t|$,

\item[(b)] $S_\ell (h_I(t)\cdot \phi_I)\ll \max(1,|t|)^a$,

\item[(c)] $|\mu^I((h_I(t)\cdot \phi_I) \phi_I)-\mu^I(\phi_I)^2|\ll \max(1,w_p|t|)^{-b}$.
\end{enumerate}
Indeed, (a) can be deduced from the property (${\rm N}_2$) of the Sobolev norms,
(b) --- from the property (${\rm N}_3$), and (c) --- from the bound \eqref{eq:m}
on correlations of order two.

\medskip

Now we proceed to estimate \eqref{eq:key}. 
Our argument proceeds by induction on $r$, and we suppose that 
we have established existence of $E=E(g_1,\ldots,g_r)$ such that for all 
proper subsets $L$ of $\{1,\ldots,r\}$ and functions $\psi_1,\ldots,\psi_r\in \mathcal{A}$,
\begin{equation}
\label{eq:induction}
|\eta_L(\psi_L)-\mu^L(\psi_L)|\le E\, S_\ell(\psi_L).
\end{equation}
We estimate each of the terms (I), (II), (III) appearing in \eqref{eq:key}.
We note that it follows immediately from the assumption \eqref{eq:induction} and \eqref{eq:bbound}
that 
\begin{equation}
\label{eq:III}
|\eta_I(\phi_I)\eta_J(\phi_J)-\mu^I(\phi_I)\mu^J(\phi_J)|\ll E.
\end{equation}
This provides an estimate for the term (III).

\medskip

To estimate the term (I), we observe that
$$
\eta(P_T\phi_I\otimes \phi_J)=\eta\left(\frac{1}{T}\int_0^T (h_I(t)\cdot \phi_I)\otimes \phi_J\, dt \right).
$$
Using that the measure $\eta$ is invariant under $h(t)=(h_I(t),h_J(t))$,
we obtain that 
$$
\eta(\phi_I\otimes \phi_J)=\eta\left(\frac{1}{T}\int_0^T (h_I(t)\cdot \phi_I)\otimes (h_J(t)\cdot\phi_J)\, dt \right).
$$
Hence, the term (I) can be estimated as
\begin{align} \label{eq:I}
&|\eta(\phi_I\otimes \phi_J)-\eta(P_T\phi_I\otimes \phi_J)|\\
\nonumber \le&\,
\eta\left(\frac{1}{T}\int_0^T \big| (h_I(t)\cdot \phi_I)\otimes (h_J(t)\cdot\phi_J)
-(h_I(t)\cdot \phi_I)\otimes \phi_J\big|\, dt \right)\\
\nonumber \le&\, \frac{1}{T}\int_0^T \big\|(h_I(t)\cdot \phi_I)\otimes (h_J(t)\cdot\phi_J -\phi_J)\big\|_{L^\infty}\, dt\\
\nonumber \le & \, \|\phi_I\|_{L^\infty}\cdot \max_{0\le t\le T} \|h_J(t)\cdot\phi_J-\phi_J\|_{L^\infty}
\ll w_{p+1} T,
\end{align}
where we used \eqref{eq:bbound} and (a).

\medskip

To estimate the term (II), we use that
$$
\eta_I(\phi_I)\eta_J(\phi_J)=\eta_I(\phi_I)\eta(1\otimes \phi_J)=\eta(\eta_I(\phi_I)\otimes \phi_J).
$$
We first show that the term (II) can be estimated  in terms of the quantity
$$
D_T(\eta_I)=\eta_I\left( |P_T\phi_I-\eta_I(\phi_I)|^2\right)^{1/2}.
$$
Indeed, we obtain that 
\begin{align*}
|\eta(P_T\phi_I\otimes \phi_J)-\eta_I(\phi_I)\eta_J(\phi_J)|
&=|\eta((P_T\phi_I-\eta_I(\phi_I))\otimes \phi_J)|\\
&\le \eta\left(|P_T\phi_I-\eta_I(\phi_I)|\otimes |\phi_J|\right)\\
&\le \eta\left(|P_T\phi_I-\eta_I(\phi_I)|\right)\|\phi_J\|_{L^\infty}\\
&\le D_T(\eta_I)
\end{align*}
by \eqref{eq:bbound} and the Cauchy--Schwarz inequality.
To deal with $D_T(\eta_I)$, we use that it can be approximated by
$$
D_T(\mu^I)=\mu^I\left( |P_T\phi_I-\mu^I(\phi_I)|^2\right)^{1/2}.
$$
The corresponding estimate is given by:

\begin{Lemma}\label{l:D}
	$|D_T(\eta_I)-D_T(\mu^I)|\ll T^{a/2} E^{1/2}$.
\end{Lemma}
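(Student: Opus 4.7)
The plan is to reduce to a squared quantity and then linearise. First, from the elementary inequality $|a-b|\le\sqrt{|a^2-b^2|}$ valid for $a,b\ge 0$, it suffices to bound $|D_T(\eta_I)^2-D_T(\mu^I)^2|$. Since both $\eta_I$ and $\mu^I$ are $h_I(t)$-invariant, each is preserved by $P_T$, so the cross term in expanding the square drops out and
\[
D_T(\nu)^2=\nu(|P_T\phi_I|^2)-\nu(\phi_I)^2\qquad\text{for }\nu\in\{\eta_I,\mu^I\}.
\]
Subtracting, the problem splits into estimating $(\eta_I-\mu^I)(|P_T\phi_I|^2)$ and $\eta_I(\phi_I)^2-\mu^I(\phi_I)^2$. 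The latter factors as $(\eta_I(\phi_I)-\mu^I(\phi_I))(\eta_I(\phi_I)+\mu^I(\phi_I))$ and is of order $E$ by the inductive hypothesis \eqref{eq:induction} together with the uniform $L^\infty$ bound \eqref{eq:bbound}.

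For the main term, the key is to exploit the tensor-product structure $\phi_I=\otimes_{i\in I}\phi_i$ and the coordinate-wise action $h_I(t)\cdot\phi_I=\otimes_{i\in I}(\exp(tZ_i)\cdot\phi_i)$, which gives
\[
|P_T\phi_I|^2=\frac{1}{T^2}\int_0^T\!\!\int_0^T \bigotimes_{i\in I}\big((\exp(tZ_i)\cdot\phi_i)(\exp(sZ_i)\cdot\phi_i)\big)\, dt\, ds,
\]
i.e.\ a double average of tensor product functions on $X^I$. For each fixed pair $(t,s)\in[0,T]^2$, the integrand is a tensor product, so the inductive hypothesis \eqref{eq:induction} applies. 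The Sobolev norm of each tensor factor is controlled by $(\mathrm{N}_4)$ (to handle the pointwise product of two translates), property (b) (to bound translates by $h_I(t)$), and $(\mathrm{N}_3)$ (to absorb the intermediate shifts in Sobolev index). This yields a pointwise estimate of the form
\[
\big|(\eta_I-\mu^I)(\text{integrand at }(t,s))\big|\ll E\cdot\max(1,|t|)^{a}\max(1,|s|)^{a},
\]
and integrating over $(t,s)\in[0,T]^2$ produces a bound of shape $E\cdot T^{2a}$ on $(\eta_I-\mu^I)(|P_T\phi_I|^2)$. Combining both pieces gives $|D_T(\eta_I)^2-D_T(\mu^I)^2|\ll E\cdot T^{2a}$, and taking square roots yields the claimed estimate with the exponent of $T$ matching the convention of $a$ fixed in the lemma.

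The main technical obstacle is bookkeeping the Sobolev indices uniformly. Squaring inside $|P_T\phi_I|^2$ forces one pass through $(\mathrm{N}_4)$ that raises the index, and the subsequent invocation of property (b) must be at this larger index; in turn, the inductive hypothesis must be applied at a single, universally chosen index $\ell$ that dominates all these shifts and is common to all $r$. Once $\ell$ is fixed large enough at the outset of the induction on $r$, the integration over $(t,s)$ and the inductive hypothesis combine mechanically, and all remaining constants and polynomial factors are absorbed into the exponent appearing in the statement of the lemma.
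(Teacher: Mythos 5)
Your proof follows the same overall route as the paper's --- reduce to squares via $|x-y|\le\sqrt{|x^2-y^2|}$, expand $|P_T\phi_I|^2$ as a double integral, and invoke the inductive hypothesis \eqref{eq:induction} pointwise --- but you miss one key simplification, and as a result you actually prove a weaker bound than stated. After expanding $|P_T\phi_I|^2$, the paper uses the $h_I(t)$-invariance of both $\eta_I$ and $\mu^I$ once more (you only used it to kill the cross term) to rewrite the integrand $\eta_I\big((h_I(t)\cdot\phi_I)(h_I(s)\cdot\phi_I)\big)$ as $\eta_I\big((h_I(s-t)\cdot\phi_I)\,\phi_I\big)$, a function of the difference $u=s-t$ alone. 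Then the Sobolev estimate produces $\max(1,|s-t|)^a$ and
$\frac{1}{T^2}\int_0^T\int_0^T \max(1,|s-t|)^a\,ds\,dt \ll T^a$,
giving $|D_T(\eta_I)^2-D_T(\mu^I)^2|\ll E\,T^a$ and hence $T^{a/2}E^{1/2}$ as claimed.

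Because you keep the integrand as a tensor product depending on $(t,s)$ separately, your Sobolev estimate yields $\max(1,|t|)^a\max(1,|s|)^a$, and the double integral contributes $T^{2a}$, so after the square root you get $T^aE^{1/2}$, not $T^{a/2}E^{1/2}$. Your closing remark that the exponent matches ``the convention of $a$ fixed in the lemma'' is not correct: $a$ is already pinned down by property (b), and cannot be silently halved here. That said, the weaker bound $T^aE^{1/2}$ would still suffice for the final optimization step \eqref{eq:max} in the proof of Theorem \ref{th:cor_high} (one would only obtain a different, still positive, exponent $\tau'$), so this is not a fatal error --- but to prove the lemma as stated you need the $s-t$ reduction.
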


\begin{proof}
Using the inequality $|x-y|\le \sqrt{|x^2-y^2|}$ with $x,y\ge 0$, we obtain that
$$
|D_T(\eta_I)-D_T(\mu^I)|\le \sqrt{|D_T(\eta_I)^2-D_T(\mu^I)^2|}.
$$
Expanding the averaging operator \eqref{eq:P_T} and changing the order of integration, we deduce that
\begin{align*}
D_T(\eta_I)^2&=\int_{X^I} |P_T\phi_I-\eta_I(\phi_I)|^2\, d\eta_I\\
&=\frac{1}{T^2}\int_0^T\int_0^T  \big( \eta_I ( (h_I(s-t)\cdot\phi_I) \phi_I)-\eta_I(\phi_I)^2  \big)\, dsdt.
\end{align*}
Similarly,
\begin{align*}
D_T(\mu^I)^2
&=\frac{1}{T^2}\int_0^T\int_0^T  \big( \mu^I ( (h_I(s-t)\cdot\phi_I) \phi_I)-\mu^I(\phi_I)^2  \big)\, dsdt.
\end{align*}
Hence,
\begin{align*}
&|D_T(\eta_I)^2-D_T(\mu^I)^2|\\
\le&\, \frac{1}{T^2}\int_0^T\int_0^T \Big( 
\big|\eta_I ( (h_I(s-t)\cdot\phi_I) \phi_I)-\mu^I ( (h_I(s-t)\cdot\phi_I) \phi_I) \big|
\\
&\quad\quad\quad\quad\quad\quad+
\big|\eta_I(\phi_I)^2-\mu^I(\phi_I)^2\big|
\Big)\, dsdt.
\end{align*}
The first term inside the integral is estimated using \eqref{eq:induction} as
\begin{align*}
&\ll E\, S_\ell \big((h_I(s-t)\cdot \phi_I)\phi_I\big)\ll E\, 
S_{\ell'} ((h_I(s-t)\cdot \phi_I) S_{\ell'}(\phi_I)\\
&\ll E\, \max(1,|s-t|)^a,
\end{align*}
where we used (${\rm N}_4$), (b), and \eqref{eq:bbound}.
The second term inside the integral is estimated using \eqref{eq:induction} and \eqref{eq:bbound} as
\begin{align*}
=|\eta_I(\phi_I)-\mu^I(\phi_I)|\cdot |\eta_I(\phi_I)+\mu^I(\phi_I)|\le E\cdot 2\|\phi_I\|_{L^\infty}\ll E.
\end{align*}
Finally, the lemma follows from the bound
$$
\frac{1}{T^2}\int_0^T\int_0^T  \max(1,|s-t|)^a\, dsdt\ll T^a.
$$
\end{proof}

It follows from (c) that 
\begin{align*}
D_T(\mu^I)^2
&=\frac{1}{T^2}\int_0^T\int_0^T  \big( \mu^I ( (h_I(s-t)\cdot\phi_I) \phi_I)-\mu^I(\phi_I)^2  \big)\, dsdt\\
&\ll 
\frac{1}{T^2}\int_0^T\int_0^T  \max(1,w_p|s-t|)^{-b}\, dsdt \ll (w_p T)^{-b}.
\end{align*}
Hence, we conclude from Lemma \ref{l:D} that the term (II) can be estimated as
\begin{equation}
\label{eq:II}
|\eta(P_T\phi_I\otimes \phi_J)-\eta_I(\phi_I)\eta_J(\phi_J)|\ll \max \big( T^{a/2}E^{1/2}, (w_pT)^{-b/2}\big).
\end{equation}

\medskip

Now combining the bounds \eqref{eq:I}, \eqref{eq:II}, and \eqref{eq:III}, 
we deduce from \eqref{eq:key} that for all $T\ge 1$,
$$
|\eta(\phi_I\otimes \phi_J)-\mu^I(\phi_I)\mu^J(\phi_J)|\ll 
\max \big(w_{p+1} T, T^{a/2}E^{1/2}, (w_pT)^{-b/2}\big).
$$
This estimate will be used to complete the proof of Theorem \ref{th:cor_high} by induction on $r$.
We suppose that \eqref{eq:induction} holds with $E=q^{-\tau}$ for some $\tau>0$.
We have to pick the index $p$ and the parameter $T$ to minimise
\begin{equation}
\label{eq:max}
\max \big(w_{p+1} T, T^{a/2}q^{-\tau/2}, (w_pT)^{-b/2}\big).
\end{equation}
We seek a bound which is uniform on the parameters $w_1,\ldots,w_r$ satisfying \eqref{eq:w}.
We take $\theta>0$ with $\theta<(r-1)^{-1}$.
Then since $w_r\le q^{-1}$, all the $r$ points 
$$
1,q^{-\theta}\ldots, q^{-(r-1)\theta}
$$
are contained in the union of $r-1$ intervals
$$
[w_r,w_{r-1}],\ldots, [w_2,w_1].
$$
Hence, by the Pigeonhole Principle, there exist $p$ and $i$ such that
$$
w_{p+1}\le q^{-(i+1)\theta} < q^{-i\theta}\le w_p.
$$
Taking $T=q^{(i+1/2)\theta}$, we obtain that \eqref{eq:max} is 
estimated by $q^{-\tau'}$ with $\tau'>0$. 
This completes the proof of Theorem \ref{th:cor_high}.

\section{Application: existence of configurations}\label{sec:conf}

Analysis of higher-order correlation can be used to study existence of 
combinatorial configurations. Perhaps, the most striking example
of this is the Szemer\'edi theorem \cite{sz} which states that any 
subset of integers of positive upper density contains arbitrary long 
arithmetic progressions. Furstenberg \cite{furst} discovered that this problem
can be modelled using dynamical systems. His approach is based on the ``Furstenberg Correspondence Principle''
which associates to a subset of positive density in $\mathbb{Z}$ a shift-invariant measure
on the space $\{0,1\}^\Z$. The crux of Furstenberg's proof \cite{furst} of 
the Szemer\'edi theorem is the following result which implies
nonvanishing of higher-order correlations.

\begin{theo}\label{th:furst}
Let $T:X\to X$ be a measure-preserving transformation of a probability space $(X,\mu)$.
Then for every nonnegative $\phi\in L^\infty(X)$ which is not zero almost everywhere,
$$
\liminf_{N\to\infty}\frac{1}{N}\sum_{i=0}^{N-1} \int_X \phi(x)\phi(T^{i}x)\cdots \phi (T^{(r-1)i}x)\, d\mu(x)>0.
$$
\end{theo}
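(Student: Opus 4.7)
The plan is to follow Furstenberg's original strategy \cite{furst} and prove that every measure-preserving system has the \emph{Szemer\'edi (SZ) property}, meaning the conclusion of Theorem \ref{th:furst}. As a first step, the ergodic decomposition reduces the problem to an ergodic system $(X,\mu,T)$: since $\phi \ge 0$ is not almost-everywhere zero, there is a positive-measure set of ergodic components on which the same is true, and it suffices to produce a positive liminf on each such component.

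The structural input is the Furstenberg--Zimmer theorem, which represents any ergodic system as a (possibly transfinite) tower
$$
\{\hbox{pt}\}=\mathcal{Z}_0 \leftarrow \mathcal{Z}_1 \leftarrow \cdots \leftarrow \mathcal{Z}_\infty \leftarrow (X,\mu,T),
$$
where each successor $\mathcal{Z}_\alpha \leftarrow \mathcal{Z}_{\alpha+1}$ is a compact extension, limit stages are inverse limits, $\mathcal{Z}_\infty$ is the maximal distal factor, and the top extension $(X,\mu,T) \leftarrow \mathcal{Z}_\infty$ is relatively weakly mixing. I would then proceed by transfinite induction along the tower, showing that SZ is inherited through four elementary steps: (i) the trivial system is SZ (immediate from $\int \phi^r\,d\mu>0$); (ii) SZ is preserved under relatively weakly mixing extensions; (iii) SZ is preserved under compact extensions; and (iv) SZ is preserved under inverse limits.

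For step (ii), a relativised van der Corput lemma shows that in a relatively weakly mixing extension $\mathcal{X} \to \mathcal{Y}$ the multilinear averages $\frac{1}{N}\sum_{i=0}^{N-1}(T^i\phi)\cdots(T^{(r-1)i}\phi)$ differ in $L^2$ from the corresponding averages of the conditional expectations $\mathbb{E}(\phi \mid \mathcal{Y})$ by an asymptotically negligible amount, so any positive lower bound on $\mathcal{Y}$ lifts to $\mathcal{X}$. Step (iv) is comparatively soft: one approximates $\phi$ in $L^2$ by functions measurable with respect to SZ subfactors and passes to the limit, using that the integrands are uniformly bounded.

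The main obstacle will be step (iii). A compact extension $\mathcal{X} \to \mathcal{Y}$ has the defining feature that the fiber orbit $\{T^n \phi\}$ is precompact in a suitable conditional $L^2$-norm over $\mathcal{Y}$. The plan is to combine this precompactness with the classical van der Waerden theorem on arithmetic progressions: given $\epsilon > 0$, precompactness produces a syndetic set $R \subset \mathbb{N}$ of times $n$ along which $T^n\phi$ is $\epsilon$-close to $\phi$ in the fiber norm; van der Waerden applied inside $R$ yields arithmetic progressions $n, n+i, \ldots, n+(r-1)i$ in $R$ for many $i$, along which $\phi, T^i\phi, \ldots, T^{(r-1)i}\phi$ are simultaneously close in the fiber sense to $\phi$; coupling this with the already-established SZ property of $\mathcal{Y}$ forces a positive lower bound on the multilinear averages over $\mathcal{X}$. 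Assembling (i)--(iv) by transfinite induction through the Furstenberg--Zimmer tower then yields the SZ property for $(X,\mu,T)$ itself, completing the proof.
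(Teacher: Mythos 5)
The paper states Theorem \ref{th:furst} without proof --- it is one of the starred results, attributed to Furstenberg's original paper \cite{furst}. Your proposal faithfully reconstructs Furstenberg's argument via the Furstenberg--Zimmer structure theorem: reduce to an ergodic system, build the transfinite tower of compact extensions capped by a relatively weakly mixing top extension, and propagate the Szemer\'edi (multiple-recurrence) property through the four basic steps, with the correct tool assigned to each (a relativised van der Corput estimate for relatively weakly mixing extensions, precompactness of fiber orbits together with van der Waerden for compact extensions, and an $L^2$-approximation argument for inverse limits). Since the statement asks only for a positive $\liminf$, not for existence of the limit, the later Host--Kra/Ziegler theory of characteristic factors mentioned elsewhere in this section is not needed; your route is the historically correct one.

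Two small points you should address when fleshing this out. First, the ergodic decomposition and the Furstenberg--Zimmer theorem both require a standard (Lebesgue) probability space, whereas the theorem is stated for an arbitrary probability space; the usual repair is to pass to the $T$-invariant sub-$\sigma$-algebra generated by $\{\phi\circ T^n\}$, which is countably generated and yields a standard factor carrying all the data of the averages. Second, your description of the compact-extension step (iii) is somewhat compressed: one cannot in general produce a single syndetic set $R$ of simultaneous near-returns over the whole space; rather, one covers the fiber orbit by a finite $\epsilon$-net over a large-measure set of fibers in $\mathcal{Y}$, applies van der Waerden to the resulting finite colouring of $\{1,\dots,N\}$ fiber by fiber, and integrates these local lower bounds over $\mathcal{Y}$ using the inductive SZ property of the base. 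The idea you identify (almost periodicity plus van der Waerden plus SZ of the base) is right, but the bookkeeping with exceptional sets of fibers is the nontrivial part of that step.
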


This result was generalised by Furstenberg and Katznelson \cite{fk} to systems
of commuting transformations which allowed to prove the following generalisation 
of the Szemer\'edi theorem.

\begin{theo}\label{th:sz}
Let $\Omega$ be a subset of $\Z^d$ of positive upper density. Then
for any finite subset $F$ of $\Z^d$, there exist $a\in \mathbb{Z}^d$ and $t\in \N$
such that
$$
a+tF\subset \Omega.
$$
\end{theo}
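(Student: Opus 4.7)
The plan is to derive Theorem \ref{th:sz} from the multiple recurrence theorem of Furstenberg and Katznelson (the generalisation of Theorem \ref{th:furst} to commuting transformations referenced just above the statement) via the Furstenberg Correspondence Principle.

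First, I would set up the correspondence. Let $\delta := d^*(\Omega) > 0$ denote the upper density of $\Omega$, realised along a F{\o}lner sequence $B_N \subset \Z^d$ with $|\Omega \cap B_N|/|B_N| \to \delta$. Consider the compact metric space $X = \{0,1\}^{\Z^d}$ equipped with the natural shift action $T$ of $\Z^d$, and the point $\omega = \mathbf{1}_\Omega \in X$. Passing to a weak-$*$ limit point $\mu$ of the empirical averages $\nu_N := \frac{1}{|B_N|} \sum_{n \in B_N} \delta_{T^n\omega}$ yields a $T$-invariant Borel probability measure on $X$, and the clopen cylinder $A := \{x \in X : x(0) = 1\}$ satisfies $\mu(A) = \delta > 0$.

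Next, I would reduce the combinatorial statement to multiple recurrence. Writing $F = \{f_1, \ldots, f_k\}$, observe that $a + tF \subset \Omega$ is equivalent to $T^{a + tf_i}\omega \in A$ for every $i$. Since $\bigcap_{i=1}^k T^{-tf_i} A$ is clopen, its indicator is continuous on $X$, so weak-$*$ convergence yields
$$\mu\left(\bigcap_{i=1}^k T^{-tf_i} A\right) = \lim_N \frac{1}{|B_N|}\sum_{n \in B_N} \mathbf{1}_\Omega(n+tf_1)\cdots \mathbf{1}_\Omega(n+tf_k).$$
Consequently, for any $t \in \N$ for which the left-hand side is strictly positive, some $n = a \in B_N$ (for $N$ sufficiently large) must satisfy $a + tF \subset \Omega$, giving the desired configuration.

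The task therefore reduces to exhibiting a single $t \in \N$ with $\mu\bigl(\bigcap_i T^{-tf_i}A\bigr) > 0$, and here I would invoke the Furstenberg--Katznelson multiple recurrence theorem applied to the commuting measure-preserving transformations $T_i := T^{f_i}$ on $(X,\mu)$, which yields
$$\liminf_{N \to \infty} \frac{1}{N}\sum_{t=0}^{N-1} \mu\left(\bigcap_{i=1}^k T_i^{-t} A\right) > 0.$$
Strict positivity of the liminf of nonnegative averages forces infinitely many $t$ to produce a positive intersection, completing the proof. The substantive obstacle is the Furstenberg--Katznelson theorem itself: extending Theorem \ref{th:furst} from iterates of a single transformation to $k$ commuting transformations is considerably deeper, as it requires a full ergodic structure theory for $\Z^k$-systems via a tower of compact and weakly mixing extensions together with an inductive analysis of characteristic factors.
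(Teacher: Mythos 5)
The paper states Theorem~\ref{th:sz} as a ``Theorem$^*$'' --- by the convention announced in the introduction, a result quoted without proof --- and simply cites Furstenberg--Katznelson \cite{fk}, so there is no in-paper argument to compare against. Your sketch is the standard and correct derivation: the Furstenberg correspondence on $\{0,1\}^{\Z^d}$ with the shift (using that the boxes in the paper's definition of upper density form a F{\o}lner sequence, so the weak-$*$ limit measure is invariant), the reduction of $a+tF\subset\Omega$ to positivity of $\mu\bigl(\bigcap_{i}T^{-tf_i}A\bigr)$ via continuity of indicators of clopen sets, and the invocation of the Furstenberg--Katznelson multiple recurrence theorem for the commuting maps $T_i=T^{f_i}$; you also rightly flag that the FK theorem itself is where the real work lies, which is precisely what the paper defers to \cite{fk}. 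One cosmetic remark: since the $t=0$ term contributes only $O(1/N)$ to the Ces\`aro average, the positive liminf indeed forces infinitely many $t\ge1$, so the requirement $t\in\N$ is met.
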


We recall that a set $\Omega$ has {\it positive upper density} if there exists
a sequence of boxes $B_n$ with lengths of all sides going to infinity such that
$$
\limsup_{n\to \infty} \frac{|\Omega\cap B_n|}{|B_n|}>0.
$$

Existence of configurations in subsets of the Euclidean space $\R^d$ has been
also extensively studied. The following results was proved  by Furstenberg, Katznelson, and Weiss \cite{fkw} for $d=2$ using ergodic-theoretic
techniques and by Bourgain \cite{bur} in general using Fourier analysis.

\begin{theo}\label{th:conf1}
Let $\Omega$ be a subset of positive density in $\R^d$, and 	$F=\{0,x_1,\ldots,x_{d-1}\}$ is a subset of points in $\R^d$ in general position. Then there exists $t_0$ such that for 
every $t\ge t_0$, the set $\Omega$ contains an isometric copy of $tF$.
\end{theo}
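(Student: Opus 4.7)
The approach I would take is Bourgain's Fourier-analytic one, since the Furstenberg--Katznelson--Weiss correspondence principle is not readily available for the full isometry group of $\R^d$ (the group is non-amenable and its translation part is non-compact). The plan is to introduce, for each $t>0$ and large $N$, the multilinear counting integral
$$
I_F(t, N) \;=\; \frac{1}{|B_N|}\int_{\SO(d)}\int_{B_N}\chi_\Omega(y)\prod_{i=1}^{d-1}\chi_\Omega(y + tRx_i)\,dy\,dR,
$$
and to show that $\liminf_{N\to\infty} I_F(t, N) > 0$ for every sufficiently large $t$. Any such lower bound produces a point $y$ and a rotation $R\in\SO(d)$ for which $y + tR\{0, x_1, \ldots, x_{d-1}\} \subset \Omega$, which is the required isometric copy of $tF$.

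The first step is to rewrite the inner $\SO(d)$-integral as a convolution of $\chi_\Omega$ with the uniform measure $\sigma_{tr_i}$ on the sphere of radius $tr_i := t|x_i|$. After applying Plancherel one obtains, schematically,
$$
I_F(t, N) \;\approx\; \int_{(\R^d)^{d-1}} \widehat{\chi_\Omega}\!\Big(-\sum_{i=1}^{d-1}\xi_i\Big)\,\prod_{i=1}^{d-1}\widehat{\chi_\Omega}(\xi_i)\,\widehat{\sigma}_{tr_i}(\xi_i)\,d\xi_1\cdots d\xi_{d-1},
$$
where the constraint $\sum\xi_i=0$ arises from integrating out $y$. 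The classical stationary-phase bound $\widehat{\sigma}_r(\xi) = O\big((r|\xi|)^{-(d-1)/2}\big)$ supplies decay whenever radii and frequencies are large.

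The central step is the decomposition $\chi_\Omega = f_\eta + g_\eta$, where $f_\eta = \chi_\Omega \ast \phi_\eta$ is a mollification at a fixed small scale $\eta>0$ and $g_\eta$ is the oscillatory remainder. The smooth part $f_\eta$ has $L^\infty$-norm at most $1$, mean value at least $\delta$ (the density of $\Omega$) on suitable balls, and is essentially constant at scale $t \gg \eta^{-1}$, so its contribution to $I_F(t, N)$ is bounded below by a positive constant depending only on $\delta$ and $F$. Every remainder term contains at least one copy of $g_\eta$, whose Fourier transform is supported in $|\xi| \gtrsim \eta^{-1}$; the spherical-measure decay then yields an upper bound of order $(t\eta^{-1})^{-(d-1)/2}$ on each such term. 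Choosing $\eta$ depending on $\delta$ and $F$, and then $t$ sufficiently large, makes the error strictly smaller than the main term.

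The main obstacle is twofold. First, one must control low frequencies, where the stationary-phase bound degenerates; this is precisely what the smooth/oscillatory decomposition is designed to absorb. Second, and more delicately, the general-position hypothesis on $F$ is essential: it ensures, via non-degeneracy of the Gram matrix of $(x_1,\ldots,x_{d-1})$, that the map $R\mapsto (Rx_1,\ldots,Rx_{d-1})$ carries enough transversality so that the product $\prod_i \widehat{\sigma}_{tr_i}(\xi_i)$ truly decays in every region of frequency space compatible with the constraint $\sum\xi_i=0$. Verifying this transversality and converting it into a usable upper bound on the error terms is the technical core of Bourgain's argument; without general position, pairs of spheres can be tangent to a common hyperplane and the combined decay is lost.
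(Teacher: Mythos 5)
The paper states this result without proof --- it is one of the starred $(^*)$ theorems, cited to Bourgain \cite{bur} --- so there is no in-paper argument to compare against. Your outline correctly identifies the Fourier-analytic framework (a counting integral over the isometry group, the smooth/oscillatory decomposition of $\chi_\Omega$, the stationary-phase decay $|\widehat{\sigma}_r(\xi)|\ll (r|\xi|)^{-(d-1)/2}$ of spherical surface measure), but the multilinear Plancherel identity you write down is incorrect once $d\ge 3$, and this is a genuine gap rather than a harmless schematic shortcut.

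The rotation $R$ acts on all of $x_1,\ldots,x_{d-1}$ simultaneously, so the $\SO(d)$-average produces a single coupled measure on $(\R^d)^{d-1}$, namely the pushforward of Haar measure under $R\mapsto(Rx_1,\ldots,Rx_{d-1})$; this is not the product $\sigma_{tr_1}\otimes\cdots\otimes\sigma_{tr_{d-1}}$. Averaging each $Rx_i$ independently over its own sphere would only enforce $|y_i-y|=t|x_i|$ and not the pairwise constraints $|y_i-y_j|=t|x_i-x_j|$, so it would not count isometric copies of $tF$ at all. A telltale symptom of the error is your remark that general position is needed to make $\prod_i\widehat{\sigma}_{tr_i}(\xi_i)$ decay on $\{\sum\xi_i=0\}$: if the product formula held, that product would decay regardless of the configuration, and general position would be superfluous. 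In Bourgain's actual argument general position enters differently, through an inductive conditioning: given $Rx_1,\ldots,Rx_{k-1}$, the image $Rx_k$ is conditionally distributed uniformly on a sphere of strictly positive radius --- positive precisely because $x_k\notin\operatorname{span}(x_1,\ldots,x_{k-1})$, which is what ``general position'' guarantees --- sitting in a $(d-k+1)$-dimensional affine subspace. The Fourier decay of this conditional spherical measure is of order $(t|\xi|)^{-(d-k)/2}$, nontrivial for $k\le d-1$, which is exactly why configurations of up to $d$ points are admissible. One peels off one factor of $\chi_\Omega$ at a time and applies your smooth/oscillatory decomposition within each step of that induction. Without the inductive conditioning, the sketch as written does not close.
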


It was shown by Bourgain \cite{bur} and  Graham \cite{gr} that an analogue of
this theorem fails for general configurations. 
Nonetheless, one may ask whether the set $\Omega$ contains approximate
configurations. This was settled by 
Furstenberg, Katznelson, and Weiss \cite{fkw}
by configurations of three points and by Ziegler \cite{zie} in general:

\begin{theo}\label{th:conf2}
	Let $\Omega$ be a subset of positive density in $\R^d$ and 
	$x_1,\ldots,x_{r-1}\in\R^d$. Then there exists $t_0$ such that
	for every $t\ge t_0$ and $\epsilon>0$, one can find $(y_0,y_1,\ldots,y_{r-1})\subset\Omega^{r}$ and an isometry of $I$ of $\R^d$ such that
	$$
	d(0,I(y_0))<\epsilon\quad\hbox{and}\quad d(tx_i,I(y_i))<\epsilon\quad\hbox{for $i=1,\ldots,r-1$.}
	$$
\end{theo}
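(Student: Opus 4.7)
The plan is to reduce the combinatorial claim to a lower bound on a multi-correlation integral for the combined translation--rotation action on $\R^d$, and then establish that lower bound by a Kronecker-factor decomposition together with stationary-phase decay for the rotation average. Fix $\epsilon > 0$ and smooth $\mathbf{1}_\Omega$ against a nonnegative bump $\chi_\epsilon$ of total mass $1$ supported in $B(0,\epsilon)$, producing $f = \mathbf{1}_\Omega \ast \chi_\epsilon$. A configuration realising the theorem's conclusion exists as soon as one can find $y \in \R^d$ and $h \in SO(d)$ with $f(y) \prod_{i=1}^{r-1} f(y + t h x_i) > 0$, since one then picks $y_i \in \Omega$ within $\epsilon$ of the relevant centres and takes $I(z) = h z + y$. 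Hence it suffices to prove, for all $t \geq t_0(\epsilon, x_1, \ldots, x_{r-1})$,
$$
\mathcal{I}(t) \;=\; \liminf_{R \to \infty} \frac{1}{\mathrm{vol}(B_R)} \int_{B_R} \int_{SO(d)} f(y) \prod_{i=1}^{r-1} f(y + t g x_i) \, dg \, dy \;>\; 0.
$$

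Next, I would apply a Furstenberg-type correspondence principle to lift $f$ to a nonnegative $F \in L^\infty(X, \mu)$ on a probability space carrying a measure-preserving $\R^d$-action by translations $(\tau_v)_{v \in \R^d}$, with $\int F\, d\mu \geq \delta$ (the upper density of $\Omega$) and with $\mathcal{I}(t)$ bounded below by the averaged multi-correlation
$$
A(t) \;=\; \int_{SO(d)} \Bigl(\int_X F(x) \prod_{i=1}^{r-1} F(\tau_{t g x_i} x) \, d\mu(x) \Bigr) dg.
$$
The task reduces to showing $A(t) \geq c(\delta, r) > 0$ for every $t \geq t_0$. I would decompose $F = F_{\mathrm{ap}} + F_\perp$, where $F_{\mathrm{ap}}$ is the projection to the Kronecker (maximal almost periodic) factor. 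On $F_\perp$, the spectral theorem for the $\R^d$-action together with the stationary-phase estimate
$$
\Bigl| \int_{SO(d)} e^{i t \langle g x_i, \xi \rangle}\, dg \Bigr| \;\ll\; (t |x_i| |\xi|)^{-(d-1)/2}, \qquad \xi \neq 0,
$$
shows that every contribution in which some frequency is bounded away from zero is $o_t(1)$; the higher-order correlation framework of Theorem \ref{th:cor_high}, transferred to a suitable homogeneous model of the translation action, controls the cross-terms uniformly in the functions. On $F_{\mathrm{ap}}$, the multi-correlation localises to a translation problem on a compact abelian group, where $A(t)$ becomes the evaluation of a continuous nonnegative almost periodic function whose value at $t = 0$ is $(\int F\, d\mu)^r \geq \delta^r$.

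The main obstacle is that the almost-periodic piece is naturally controlled only along a syndetic set of scales $t$ at which the dilates $t x_i$ approximately return to the origin in the Bohr compactification of the Kronecker factor, whereas the theorem asks for a lower bound valid for \emph{every} sufficiently large $t$. To bridge this gap, I would exploit the continuity of $t \mapsto A(t)$ (which follows from the smoothness of $f$ together with compactness of $SO(d)$) and run a pigeonhole / density-increment argument on the Bohr spectrum of $F_{\mathrm{ap}}$, following the scheme of Ziegler \cite{zie}: were $A(t_n)$ to drop below $\delta^r/2$ along some $t_n \to \infty$, one would extract a limit obstruction on the Kronecker factor incompatible with $\int F\, d\mu \geq \delta$. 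This is the essential reason why the statement holds uniformly for all large $t$ rather than only along syndetic subsequences, and constitutes the principal technical effort in the argument; the higher-order correlation bound from Section \ref{sec:cor_gen} enters only as the quantitative input that allows one to discard $F_\perp$ with an effective error term, thereby keeping the Kronecker analysis self-contained.
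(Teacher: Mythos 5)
Note first that the paper does not supply a proof of this result: it is a starred theorem (the \texttt{theo} environment, which the introduction flags as a result stated \emph{without} proof), and is attributed to Furstenberg--Katznelson--Weiss \cite{fkw} for $r=3$ and to Ziegler \cite{zie} for general $r$. So there is no in-paper argument to compare against, and I can only assess your sketch on its own terms and against the strategy of \cite{fkw,zie}.

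Your opening reduction --- mollify $\mathbf{1}_\Omega$, pass to the averaged quantity $\mathcal{I}(t)$ over translations and rotations, and lift by a Furstenberg-type correspondence to a multi-correlation $A(t)$ on a probability system carrying an $\R^d$-action --- is the correct setup and matches the standard approach. The central gap is the claim that the Kronecker (order-one almost periodic) factor is characteristic for the $r$-fold averages $\int_X F\,\prod_{i=1}^{r-1} F(\tau_{t g x_i}x)\,d\mu$ with the orthogonal complement $F_\perp$ giving $o_t(1)$. This is true for $r=3$, which is why \cite{fkw} could get away with a Kronecker analysis, but it is false for $r\ge 4$: the Host--Kra/Ziegler theory, which is exactly what \cite{zie} is built on, shows that the characteristic factor for $r$-fold correlations is an $(r-2)$-step nilfactor, and there exist functions orthogonal to the Kronecker factor (e.g.\ Conze--Lesigne-type quasi-eigenfunctions) whose contribution to the $r$-fold average does not decay. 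The stationary-phase bound $\bigl|\int_{SO(d)} e^{it\langle gx,\xi\rangle}\,dg\bigr|\ll (t|x||\xi|)^{-(d-1)/2}$ is a genuinely useful linear spectral estimate, but it does not linearise the product over $i$: expanding $\prod_i F_\perp(\tau_{tgx_i}x)$ produces joint phases $t\sum_i\langle gx_i,\xi_i\rangle$ whose stationary points need not be controlled by separating each $\xi_i$ from zero, and no bound on a single matrix coefficient will by itself dispose of these cross terms. This is precisely why Ziegler's paper develops the nilfactor machinery rather than stopping at the Kronecker stage, so appealing to ``the scheme of Ziegler'' while keeping the Kronecker factor as the characteristic factor is internally inconsistent.

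Two secondary issues. First, the invocation of Theorem \ref{th:cor_high} ``transferred to a suitable homogeneous model of the translation action'' does not make sense as stated: that theorem concerns actions of a simple Lie group with a spectral gap and exponential decay of correlations, whereas the $\R^d$-action coming from the Furstenberg correspondence is abelian, has no spectral gap, and its correlations need not decay at any rate. Second, the crucial step where you upgrade a lower bound along a syndetic set of times to one valid for every large $t$ --- the ``pigeonhole / density-increment on the Bohr spectrum'' --- is the heart of both \cite{fkw} and \cite{zie} and requires substantially more than continuity of $t\mapsto A(t)$: one has to analyse the limiting dynamics on a concrete (nil)factor and show an actual obstruction. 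As sketched, that step remains a declaration of intent rather than an argument.
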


The proof of Theorem \ref{th:conf2} requires more detailed analysis of the averages
of correlations 
\begin{equation}
\label{eq:limit}
\frac{1}{N}\sum_{i=0}^{N-1} \int_X \phi_0(x)\phi_1(T^{i}x)\cdots \phi_{r-1} (T^{(r-1)i}x)\, d\mu(x)
\end{equation}
for $\phi_0,\ldots,\phi_{r-1}\in L^\infty(X)$.
While the case when $r=3$ can be reduced to investigating translations on
compact abelian groups. The general case have presented a significant challenge
that was solved in the ground-breaking works of Host and Kra \cite{hk}, and Ziegler \cite{zie_factor}.
These works developed a comprehensive method that allowed to understand
limits in $L^2(X)$ of the averages \eqref{eq:limit}. It turns out that this reduces
to analysing 
this limit for the so-called characteristic factors which are shown to be
inverse limits of dynamical systems which are translations on nilmanifolds.
Thus, remarkably to investigate the general limits of the averages \eqref{eq:limit}
it suffices to deal with these limits for nilsystems. 
We also mention that 
Leibman \cite{lei} and Ziegler \cite{zie_0} established existence of the limit
of \eqref{eq:limit} for translations on nilmanifolds.

\medskip

More generally, let us consider a locally compact group $G$ equipped with a left-invariant metric. Given a ``large'' subset of $G$, we would like to show that 
it approximately contains an isometric copy of a given configuration $(g_1,\ldots,g_r)\in G^r$.
It is not clear what a natural notion of largeness in $G$ is, especially when the group $G$
is not amenable. In any case, one definitely views a subgroup $\Gamma$ in $G$ with finite covolume as being ``large''. We will be interested in investigated
 how rich the set of  configurations $(\gamma_1,\ldots, \gamma_r)\in \Gamma^r$ is.  In particular, one may wonder whether general configurations
 $(g_1,\ldots,g_r)\in G$ can be approximated by isometric copies of 
 the configurations  $(\gamma_1,\ldots, \gamma_r)\in \Gamma^r$ (see Figure \ref{f:conf}),
 namely, whether for every $\epsilon>0$, there exists an isometry $I:G\to G$
 such that 
 \begin{align}\label{eq:conf}
 d(g_i,I(\gamma_i))<\epsilon\quad\hbox{for $i=1,\ldots,r$}.
 \end{align}

\begin{figure}[h]
	\includegraphics[width=0.7\linewidth]{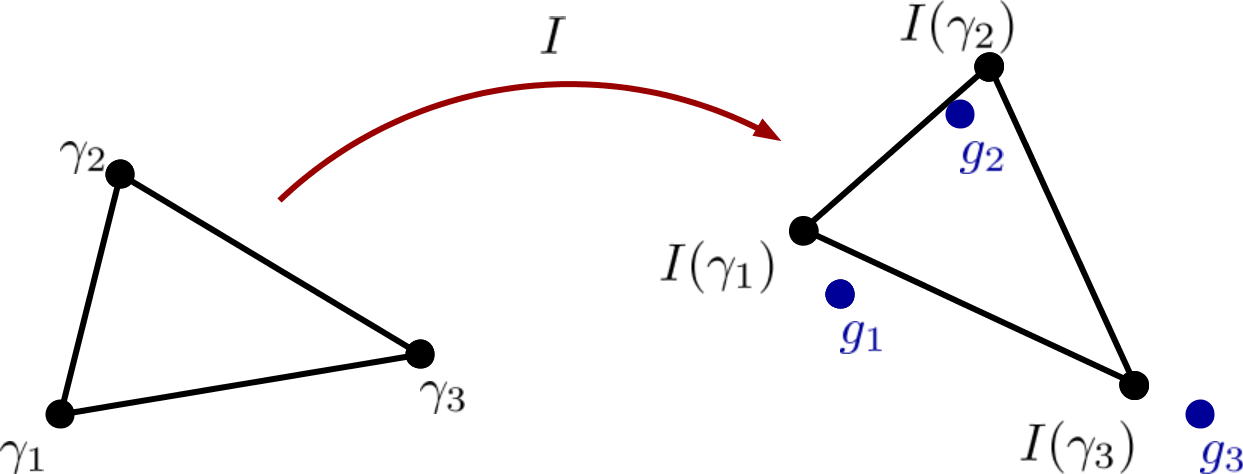}
	\caption{Existence of approximate configurations.}\label{f:conf}
\end{figure}

 It was observed by Bj\"orklund, Einsiedler, and Gorodnik \cite{beg}
 that the estimates on higher-order correlations (Theorem \ref{th:cor_high}) can be used to solve this problem in an optimal way when 
 $G$ is a connected simple Lie group with finite center,  and $\Gamma$ is a discrete subgroup of $G$ with finite covolume. 
 It is clear that since $\Gamma$ is discrete, the approximation
  \eqref{eq:conf} can not hold when the points $g_i$ are not too ``clustered''
  together. To address this issue, we introduce the notion of {\it width}:
  for $(g_1,\ldots,g_r)\in G^r$, we set
  $$
  {\sf w}(g_1,\ldots,g_r)=\min_{i\ne j} d(g_i,g_j). 
  $$
  We shall show that \eqref{eq:conf} can be established provided that
  the points are sufficiently spread out in terms of $\epsilon$.
  
  \begin{Theorem}\label{th:conf}
  For every $r\ge 2$, there exist $c_r,\epsilon_r>0$ such that for all tuples 
  $(g_1,\ldots,g_r)\in G^r$ satisfying
  \begin{equation*}
  {\sf w}(g_1,\ldots,g_r)\ge c_r\log(1/\epsilon)\quad \hbox{with $\epsilon\in (0,\epsilon_r)$},
  \end{equation*}
  there exists a tuples $(\gamma_1,\ldots,\gamma_r)\in \Gamma^r$ 
  and $g\in G$ such that
  $$
  d(g_i,g\cdot \gamma_i)<\epsilon\quad\hbox{for $i=1,\ldots,r$.}
  $$
  \end{Theorem}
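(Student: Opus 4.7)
The strategy is to convert the configuration problem into a positivity statement for an $r$-fold correlation integral on $X = G/\Gamma$, and then to apply Theorem \ref{th:cor_high}. Set $d_0 = \dim G$, fix $\epsilon$ below the injectivity radius of $X$ at the identity coset $e\Gamma$, and let $W_\epsilon \subset X$ denote the injective image of the metric ball $B_\epsilon(e) \subset G$. Using left-invariance of the Riemannian metric, the condition $d(g_i, g\gamma_i) < \epsilon$ for all $i$ is equivalent to $g_i^{-1} g\Gamma \in W_\epsilon$ for all $i$, which, after setting $x = g\Gamma$, is exactly the assertion that $g_i^{-1} x \in W_\epsilon$ for every $i = 1,\ldots,r$. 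To produce such an $x$, I would build a nonnegative bump $\phi_\epsilon \in C_c^\infty(X)$ supported in $W_\epsilon$ satisfying $\int_X \phi_\epsilon\, d\mu \gg \epsilon^{d_0}$ and $S_\ell(\phi_\epsilon) \ll_\ell \epsilon^{-\beta(\ell)}$ for some $\beta(\ell) > 0$, by pushing forward a standard mollifier of width $\epsilon$ on $G$; polynomial blow-up of Sobolev norms follows from the $\epsilon^{-|\alpha|}$ scaling of derivatives of a mollified bump.

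Since $\phi_\epsilon \ge 0$, it suffices to show that the $r$-fold correlation
$$
\int_X \phi_\epsilon(g_1^{-1} x) \cdots \phi_\epsilon(g_r^{-1} x)\, d\mu(x)
$$
is strictly positive; positivity then forces the integrand to be nonzero at some $x = g\Gamma$, which supplies both the element $g$ and the lattice points $\gamma_i$ we seek. By Theorem \ref{th:cor_high} applied with all $\phi_i = \phi_\epsilon$, and using the conversion $N(g_1,\ldots,g_r)^{-\delta_r} \ll e^{-\delta_r'\, {\sf w}(g_1,\ldots,g_r)}$ coming from the bound $\|g\| \asymp e^{d(g,e)}$ recorded at the end of Section \ref{sec:cor_2}, this correlation equals
$$
\Big(\int_X \phi_\epsilon\, d\mu\Big)^{r} + O_r\big(\epsilon^{-r\beta(\ell_r)}\, e^{-\delta_r'\, {\sf w}(g_1,\ldots,g_r)}\big),
$$
whose main term is $\gtrsim \epsilon^{r d_0}$. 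Choosing $c_r$ large enough that the hypothesis ${\sf w} \ge c_r \log(1/\epsilon)$ forces $e^{-\delta_r'\, {\sf w}} \ll \epsilon^{r(d_0 + \beta(\ell_r)) + 1}$ makes the error strictly smaller than the main term for all sufficiently small $\epsilon$, and hence the integral is positive.

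All of the genuine substance of the argument is packaged in Theorem \ref{th:cor_high}, and the steps above amount to bookkeeping. The one place requiring honest calculation is the mollifier construction on the (possibly non-cocompact) space $X$: one must verify the claimed Sobolev scaling for an $\epsilon$-bump at $e\Gamma$ by working in a chart and differentiating against the left-invariant vector fields underlying $S_\ell$. I anticipate this as the sole technical nuisance; it presents no conceptual obstacle, and the exponent $c_r$ can be read off explicitly from $d_0$, $\beta(\ell_r)$, and $\delta_r'$.
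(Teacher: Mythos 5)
Your proposal follows essentially the same route as the paper's proof: construct a smooth bump supported near the identity coset, apply Theorem \ref{th:cor_high} to show the $r$-fold autocorrelation is positive once ${\sf w}(g_1,\ldots,g_r)\ge c_r\log(1/\epsilon)$, and read off the configuration from positivity. The only cosmetic divergence is in normalisation: the paper takes $\tilde\phi_\epsilon\in C_c^\infty(G)$ with $\int\tilde\phi_\epsilon\,dm=1$ and periodises it to $\phi_\epsilon(g\Gamma)=\sum_{\gamma\in\Gamma}\tilde\phi_\epsilon(g\gamma)$, which sidesteps the "technical nuisance" you anticipate about charts on a possibly non-cocompact $X$ (the Sobolev bound is established upstairs on $G$, and the pushforward is automatically in $C_c^\infty(G/\Gamma)$), whereas you normalise to an $L^\infty$-bounded bump with integral $\gg\epsilon^{d_0}$; the resulting exponents $c_r$ agree up to relabelling.
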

  
 Let us illustrate Theorem \ref{th:conf} by an example of the orbit $\Gamma \cdot i$  in the hyperbolic plane $\mathbb{H}^2$ for $\Gamma=\hbox{PSL}_2(\mathbb{Z})$. 
 For $g\in \hbox{PSL}_2(\mathbb{R})$,  
  $$
  d(g i,i)=\cosh^{-1} (\|g\|^2/2)
  $$
 where $\|\cdot\|$ is the Euclidean norm. In this case, Theorem \ref{th:conf} with $r=2$
 reduces to showing that any distance $D>0$ can be approximated by distances from the set
  $$
 \Delta=\{\cosh^{-1} ((a^2+b^2+c^2+d^2)/2):\,\, a,b,c,d\in\Z^4,\, ad-b c=1  \}.
 $$
 Namely, when $D\ge c_2 \log(1/\epsilon),$
 there exists $\delta\in \Delta$ such that
 $|D-\delta|<\epsilon.$ 
 On the other hand, one can show that the set $\Delta$ is not $\epsilon$-dense in an
 interval $[a_\epsilon,\infty)$ with $a_\epsilon=o(\log(1/\epsilon))$ as $\epsilon\to 0^+$.

\begin{proof}[Proof of Theorem \ref{th:conf}]
We consider the action of $G$ on the space $X=G/\Gamma$ equipped
 with the normalised invariant measure $\mu$ and 	
apply Theorem \ref{th:cor_high} to a suitably chosen family of test 
functions supported on $X$. We take nonnegative $\tilde \phi_\epsilon\in C_c^\infty(G)$
such that 
$$
\hbox{supp}(\tilde \phi_\epsilon)\subset B_\epsilon(e),\quad
\mu(\tilde \phi_\epsilon)=1,\quad
S_\ell (\tilde \phi_\epsilon) \ll \epsilon^{-\alpha},
$$
for some fixed $\alpha>0$ depending only on $\ell$ and $G$.
Such a family of function can be constructed using
a local coordinate system in a neighbourhood of identity in $G$. We set
$$
\phi_\epsilon (g\Gamma)=\sum_{\gamma\in \Gamma} \tilde \phi_\epsilon(g\gamma),\quad g\in G,
$$
which defines a function in $C_c^\infty(G/\Gamma)$.
Then Theorem \ref{th:cor_high} give that
\begin{align*}
\mu((g_1\cdot \phi_\epsilon)\cdots (g_r\cdot \phi_\epsilon))=1
+ O_r \left( e^{-\delta\, {\sf w}(g_1,\ldots,g_r)} \epsilon^{-\alpha r}\right)
=1
+ O_r \left( \epsilon^{c_r\delta -\alpha r }\right).
\end{align*}
If we take $c_r>\alpha r/\delta$, then it follows from this estimate that
for all sufficiently small $\epsilon$,
$$
\mu((g_1\cdot \phi_\epsilon)\cdots (g_r\cdot \phi_\epsilon))>0.
$$
Since 
$$
\mu((g_1\cdot \phi_\epsilon)\cdots (g_r\cdot \phi_\epsilon))=\int_{G/\Gamma} \left(\sum_{\gamma_1,\ldots,\gamma_r\in \Gamma} \tilde\phi_\epsilon (g_1^{-1}g\gamma_1)\cdots \tilde\phi_\epsilon (g_r^{-1}g\gamma_r)\right)\, d\mu(g\Gamma),
$$
it follows that there exist 
$(\gamma_1,\ldots,\gamma_r)\in \Gamma^r$ and $g\in G$ such that
$$
g_i^{-1}g\gamma_i\in \hbox{supp}(\tilde \phi_\epsilon)\subset B_\epsilon(e)\quad\hbox{for $i=1,\ldots,r$},
$$
so that
$$
d(g_i, g\gamma_i)=d(g_i^{-1}g\gamma_i,e)<\epsilon\quad\hbox{for $i=1,\ldots,r$},
$$
as required.
\end{proof}

\section{Application: Central Limit Theorem}\label{sec:clt}

Suppose that the time evolution of a physical system is given by a one-parameter flow $T_t:X\to X$ on the phase space $X$. Observables of this system are represented by functions $\phi$
on $X$ so that studying the transformation of this system 
as time progresses involves the analysis of the values
$\phi(T_tx)$ with $t\ge 0$ and $x\in X$. Often these values fluctuate quite erratically
which makes it difficult to understand them in deterministic terms.
Instead, one might attempt to study their statistical properties.
Formally, we consider $\{\phi\circ T_t:\, t \ge 0\}$ as a family of random 
variables on $X$. For chaotic flows, these family typically exhibits quasi-independence
properties, and it is natural to expect that they satisfy probabilistic limit laws
known for independent random variables.
For instance, we mention one of the first results in this direction
which was proved by Sinai \cite{S}:

\begin{Theorem}\label{th:sinai}
Let $g_t:T^1(M)\to T^1(M)$ be  the geodesic flow on 
on a compact manifold $M$ with constant negative curvature. Then for any $\phi\in C^{1+\alpha}(X)$
with zero integral, the family of functions
$$
F_t(x)=t^{-1/2}\int_0^t \phi(g_sx)\,ds
$$
converges in distribution to the Normal Law as $t\to\infty$; that is, for all $\xi\in \mathbb{R}$,
$$
\frac{\hbox{\rm vol}\big(\{x\in T^1(M): F_t(x)<\xi\}\big)}{\hbox{\rm vol}(T^1(M))}\longrightarrow \hbox{\rm Norm}_{\sigma(\phi)}(\xi)\quad\hbox{as $t\to\infty$, }
$$
where 
\begin{align*}
 \hbox{\rm Norm}_{\sigma}(\xi)&=
 (\sqrt{2\pi}\sigma)^{-1}\int_{-\infty}^\xi e^{-s^2/(2\sigma^2)}\, ds
\end{align*}
denotes the Normal Distribution with variance $\sigma$.
\end{Theorem}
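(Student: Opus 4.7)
The plan is to establish the Central Limit Theorem via the classical method of moments, with the quantitative higher-order mixing estimates from Theorem \ref{th:cor_high} supplying the needed decay of joint moments. First, I would realise the geodesic flow as a homogeneous flow: writing $M = \Gamma\backslash\mathbb{H}^n$ for a cocompact lattice $\Gamma$ in $G = \mathrm{SO}(n,1)^\circ$, one identifies $T^1(M)$ with a double-coset space of $G$, and the geodesic flow becomes right multiplication by the Cartan one-parameter subgroup $a_s$. Since $G$ is a simple Lie group and $\Gamma$ is cocompact, the representation of $G$ on $L_0^2(T^1(M))$ is isolated from the trivial representation, so the rank-one version of Theorem \ref{th:cor_high} applies, as indicated by the remark following Theorem \ref{th:high-rank2}.

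Second, since the Normal distribution is determined by its moments, it suffices to prove that for every $r\ge 1$,
$$ \int_X F_t(x)^r\, d\mu(x) \;\longrightarrow\; \int_{\R} \xi^r\, d\mathrm{Norm}_{\sigma(\phi)}(\xi) \quad\text{as } t\to\infty.$$
Expanding and interchanging integrations yields
$$ \int_X F_t^r\, d\mu \;=\; t^{-r/2}\int_{[0,t]^r} C_\phi(s_1,\ldots,s_r)\, ds_1\cdots ds_r,$$
where $C_\phi(s_1,\ldots,s_r) = \int_X \phi(a_{s_1}x)\cdots \phi(a_{s_r}x)\, d\mu(x)$ are precisely the higher-order correlations controlled in \S\ref{sec:cor_gen}.

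Third, I would apply the cumulant (M\"obius) decomposition, writing $C_\phi$ as a sum over set-partitions $\pi$ of $\{1,\ldots,r\}$ of products of cumulants $\kappa_{|B|}$ indexed by the blocks $B\in\pi$. The zero-mean hypothesis on $\phi$ kills all singleton contributions. Theorem \ref{th:cor_high}, combined with M\"obius inversion, then yields exponential decay of each cumulant $\kappa_k$ of order $k\ge 2$ in the minimal separation of its arguments; iterating the estimate along a cluster decomposition upgrades this to decay in essentially the spanning-tree length.

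Fourth, I would integrate the cumulant expansion over $[0,t]^r$. For each pair $\{i,j\}$,
$$ \int_{[0,t]^2} C_\phi(s_i,s_j)\, ds_i\, ds_j \;\sim\; t\cdot\sigma(\phi)^2, \qquad \sigma(\phi)^2 \;=\; \int_{\R}\int_X \phi(a_s x)\phi(x)\, d\mu(x)\, ds,$$
while every $k$-cumulant with $k\ge 3$ contributes only $O(t)$ after integration over $[0,t]^k$, because the exponential decay confines the integrand to a bounded tube around the main diagonal. After the prefactor $t^{-r/2}$, only pair partitions survive in the limit: their number is $(r-1)!!$ (and zero when $r$ is odd), so the limit equals $(r-1)!!\,\sigma(\phi)^{r}$, matching the moments of $\mathrm{Norm}_{\sigma(\phi)}$.

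The main obstacle will be the sharp integrated cumulant bound underlying the third and fourth steps. Theorem \ref{th:cor_high} directly controls only the extremal spacing $\min_{i\ne j}\|g_i^{-1}g_j\|$, but obtaining the $O(t)$ bound on the integral of a $k$-cumulant over $[0,t]^k$ for $k\ge 3$ requires decay in the spanning-tree metric; this in turn demands a careful iteration of the mixing estimate along cluster decompositions while tracking the growth of the Sobolev orders $\ell_r$ which themselves depend on $r$. A secondary technicality is that $\phi\in C^{1+\alpha}$ rather than $C^\infty$, so one must first approximate $\phi$ by smooth functions with controlled $S_{\ell_r}$-norms and estimate the resulting truncation error.
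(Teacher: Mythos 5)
Your proposal follows essentially the same route as the paper: the paper does not prove Theorem~\ref{th:sinai} directly but derives it as a special case of the general CLT (Theorem~\ref{th:clt}, via Corollary~\ref{cor:cartan}), whose proof proceeds exactly by (i) reducing to a homogeneous flow, (ii) establishing quantitative multiple mixing in the rank-one/spectral-gap setting, and (iii) verifying a cumulant criterion (Proposition~\ref{p:cum}, which the paper itself notes is ``essentially equivalent'' to the method of moments) by a cluster decomposition of $H^r$ into the sets $\Delta_Q(\alpha,\beta)$ (Proposition~\ref{prop_decom}). You have correctly identified the central difficulty --- that exponential decay in the \emph{minimal} pairwise spacing from Theorem~\ref{th:cor_high} is insufficient by itself, since $\int_{[0,t]^k}e^{-\delta\min_{i\ne j}|s_i-s_j|}\,ds$ is of order $t^{k-1}$ rather than $O(t)$ for $k\ge 3$ --- and your ``cluster decomposition / spanning-tree'' remedy is precisely what the paper formalizes in Propositions~\ref{p:bound} and~\ref{prop_decom}. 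The one ingredient you do not name explicitly, but which is the engine behind Proposition~\ref{p:bound}, is the identity $\cum_r(\phi_1,\ldots,\phi_r\,|\,Q)=0$ for nontrivial partitions $Q$ (equation~\eqref{eq:cum_cond}): it is this algebraic cancellation, combined with comparing $\cum_r$ to the conditional cumulant block by block, that converts the min-spacing mixing bound into the exponent $e^{-\delta_r\beta-\sigma_r\alpha}$ on each cluster region. Your secondary remark about $C^{1+\alpha}$ versus $C_c^\infty$ is well taken --- the paper's machinery is stated for smooth functions in a Sobolev-type algebra and the smoothing step you describe is indeed needed to recover Sinai's original regularity class, although the paper glosses over this when it says ``our method also provides a new proof.''
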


Validity of the Central Limit Theorem for one-parameter dynamical systems
has been extensively studied in the last decades, and 
we refer to surveys \cite{D,Den,gou,lB2,v} for an introduction 
to this vast area of research. However, there was very little known 
about the distribution of averages for more general groups actions.
In this section, we present a method developed by Bj\"orklund and Gorodnik \cite{bg}
for proving the Central Limit Theorem, which is based on the quantitative estimates
for higher-order correlations established in the previous sections.

\medskip

Let us consider an action of a group $H$ on a standard probability space $(X,\mu)$.
Given a function $\phi$ on $X$, we consider the family of its translations 
$$
(h\cdot\phi)(x)=\phi(h^{-1}x)\quad\hbox{with $h\in H$.}
$$
One may think about $\{h\cdot\phi:\,h\in H\}$ as a collection of 
identically distributed random variables on the probability space $(X,\mu)$.
When the action exhibits chaotic behaviour, it is natural to expect
that these random variable are quasi-independent in a suitable sense
which leads to the question whether these random variables satisfy
analogues of the standard probabilistic laws such as, for instance, 
the Central Limit Theorem, the Law of Iterated Logarithms, etc.

Here we prove a general Central Limit theorem for group actions.
From the perspective of this notes, the chaotic
nature of group actions is reflected in the asymptotic behaviour
of the higher-order correlations. We demonstrate that
quantitative estimates on correlations imply the Central Limit Theorem.
Although we do not pursue this direction here, we mention that this approach
has found interesting applications in Number Theory
to study the distribution of arithmetic counting functions
(see \cite{bg1,bg2}). 

Let $H$ be a (noncompact) locally compact group $H$ equipped with a left-invariant metric $d$.
We consider a measure-preserving action of $H$ on a standard probability space $(X,\mu)$.
We assume that this action is mixing of all orders in the following 
quantitative sense. There exists a subalgebra $\mathcal{A}$ of $L^\infty(X)$
equipped with a family of norms 
$$
S_1\le S_2\le \cdots \le S_\ell\le \cdots
$$
satisfying the following properties:
 \begin{enumerate}
 	\item[(${\rm N}_1$)] there exists $\ell_1$ such that
 	$$
 	\|\phi\|_{L^\infty}\ll S_{\ell_1}(\phi),
 	$$
 	\item[(${\rm N}_3$)] for all $\ell$, there exists $\sigma_\ell>0$ such that
 	$$
 	S_\ell(g\cdot \phi)\ll_\ell e^{\sigma_\ell\, d(g,e)} \, S_\ell(\phi)\quad\hbox{for all $g\in G$,}
 	$$
 	\item[(${\rm N}_4$)] for every $\ell$, there exists $\ell'$ such that
 	$$
 	S_\ell(\phi_1\phi_2)\ll_\ell S_{\ell'}(\phi_1) S_{\ell'}(\phi_2).
 	$$
 \end{enumerate}
We suppose that for every $r\ge 2$
there exist $\delta_r,\ell_r>0$
such that for all elements $h_1,\ldots,h_r\in H$ and all functions $\phi_1,\ldots,\phi_r\in \mathcal{A}$,
\begin{align}\label{eq:mix_end}
\mu((h_1\cdot \phi_1)\cdots (h_r\cdot \phi_r))= \,&\mu(\phi_1)\cdots \mu(\phi_r)\\
&+
O_r\left( S_{\ell_r}(\phi_1)\cdots S_{\ell_r}(\phi_r)\, e^{-\delta_r D(h_1,\ldots,h_r)}\right), \nonumber
\end{align}
where 
$$
D(h_1,\ldots,h_r)=\min_{i\ne j} d(h_i,h_j).
$$

We shall additionally assume that  the group $H$ has {\it subexponential growth}
which means tha the balls
$$
B_t=\{h\in H:\, d(h,e)<t\}
$$
satisfy
\begin{equation}
\label{eq:subexp}
\frac{\log\hbox{vol}(B_t)}{t}\to 0\quad\hbox{as $t\to\infty$.}
\end{equation}

Our main result is the following:

\begin{Theorem}\label{th:clt}
For every $\phi\in \mathcal{A}$ with integral zero,
the family of functions 
\begin{equation}
\label{eq:f_t}
F_t(x)=\hbox{\rm vol}(B_t)^{-1/2} \int_{B_t} \phi(h^{-1}x)\,dh
\end{equation}
converges in distribution as $t\to \infty$ to the Normal Law with mean zero and the variance 
$$
\sigma(\phi)^2=\int_H \left<h\cdot \phi,\phi\right>dh.
$$
Explicitly, this means that for every $\xi\in \mathbb{R}$,
$$
\mu\big(\left\{x\in X:\, F_t(x)<\xi \right\}\big)\longrightarrow \hbox{\rm Norm}_{\sigma(\phi)}(\xi)\quad\hbox{as $t\to \infty$.}
$$
\end{Theorem}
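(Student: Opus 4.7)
The plan is to prove Theorem \ref{th:clt} by the classical method of moments. I will show that for every integer $r \ge 1$,
$$
M_r(t) := \int_X F_t(x)^r\, d\mu(x) \longrightarrow m_r(\sigma(\phi)) \quad \hbox{as $t\to \infty$,}
$$
where $m_r(\sigma)$ denotes the $r$th moment of $\hbox{Norm}_\sigma$; explicitly $m_{2k+1}=0$ and $m_{2k}=(2k-1)!!\,\sigma^{2k}$. Since the normal distribution is uniquely determined by its moments, convergence of moments implies convergence in distribution. Expanding the definition of $F_t$ and applying Fubini gives
$$
M_r(t) = \vol(B_t)^{-r/2}\int_{B_t^r} \mu\Big(\prod_{i=1}^r h_i\cdot\phi\Big)\,dh_1\cdots dh_r.
$$

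The first key step is to introduce a cutoff scale $R=R(t)=C\log t$ with $C>0$ large enough (depending on $r$) that $e^{-\delta_r R}$ dominates the exponential-in-$R$ Sobolev growth appearing below. For each tuple $\mathbf{h}=(h_1,\ldots,h_r)$, declare $i \sim_R j$ when there exists a chain $i=i_0,\ldots,i_s=j$ with $d(h_{i_p}, h_{i_{p+1}}) \le R$; this partitions $\{1,\ldots,r\}$ into a cluster partition $\mathcal{P}(\mathbf{h})$ such that any two indices in different clusters are at distance $> R$. Fixing a set partition $\mathcal{P}$ and choosing a reference point $h_C^\ast$ in each cluster $C$, write $\prod_{i\in C} h_i\cdot\phi = h_C^\ast\cdot \Psi_C$ with $\Psi_C=\prod_{i\in C}(h_C^\ast)^{-1}h_i\cdot\phi$. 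Since the other elements lie within distance $(|C|-1)R$ of $h_C^\ast$, properties $(\mathrm{N}_3)$ and $(\mathrm{N}_4)$ give $S_{\ell_r}(\Psi_C)\ll e^{c|C| R}$, and \eqref{eq:mix_end} with separation exceeding $R$ yields
$$
\mu\Big(\prod_{i=1}^r h_i\cdot\phi\Big) = \prod_{C\in\mathcal{P}}\mu\Big(\prod_{i\in C} h_i\cdot\phi\Big) + O\big(e^{(c' - \delta_r)R}\big),
$$
which is negligible after normalising by $\vol(B_t)^{r/2}$, thanks to the subexponential growth \eqref{eq:subexp}.

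The second key step is to identify which partitions survive in the limit. If $\mathcal{P}$ contains a singleton $\{i\}$, the factor $\mu(h_i\cdot\phi)=\mu(\phi)=0$ annihilates the main term. If $\mathcal{P}$ contains a block $C$ of size $\ge 3$, the volume of tuples compatible with $\mathcal{P}$ restricted to $C$ is at most $\vol(B_t)\cdot\vol(B_{|C|R})^{|C|-1}$, and the contribution to $M_r(t)$ is $O(\vol(B_t)^{|\mathcal{P}|-r/2}\prod_C\vol(B_{|C|R})^{|C|-1})$; since $|\mathcal{P}|<r/2$ whenever some block has size $\ge 3$ (and no singletons) and the volume factors grow subexponentially, this tends to zero. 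Consequently $M_r(t)\to 0$ for odd $r$. For even $r=2k$, only the $(2k-1)!!$ pair partitions survive, and the contribution from each reduces, via the substitution $g=h_{i_s}^{-1}h_{j_s}$ in every pair, to a product of $k$ factors of the form
$$
\vol(B_t)^{-1}\int_{B_R}\vol(B_t\cap B_t g^{-1})\,\langle g\cdot\phi,\phi\rangle\,dg.
$$
The subexponential growth ensures $\vol(B_t\cap B_t g^{-1})/\vol(B_t)\to 1$ uniformly on $B_R$, while the order-two case of \eqref{eq:mix_end} makes $\int_H |\langle g\cdot\phi,\phi\rangle|\,dg$ finite. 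Hence each pair factor tends to $\sigma(\phi)^2$, and summing over the $(2k-1)!!$ pair partitions yields $M_{2k}(t)\to (2k-1)!!\,\sigma(\phi)^{2k}$.

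The main obstacle is the calibration of the scale $R$ so that three competing constraints hold simultaneously: (i) $R$ is large enough that the mixing error in the cluster factorisation defeats the exponential Sobolev growth of translated test functions; (ii) $R$ is small enough that clusters of size $\ge 3$ carry asymptotically vanishing relative volume, and that configurations where two supposedly different pairs happen to lie within distance $R$ of each other are negligible; (iii) the volume ratio $\vol(B_t\cap B_t g^{-1})/\vol(B_t)$ converges to $1$ uniformly on $B_R$. The subexponential volume growth \eqref{eq:subexp} is precisely what renders these constraints compatible and is used crucially in each of the three estimates.
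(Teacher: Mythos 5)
Your plan shares the essential ingredients of the paper's proof (cluster decomposition, the higher--order mixing estimate \eqref{eq:mix_end}, subexponential volume), and the paper likewise reduces the moment/cumulant computation to a ``clusters versus separation'' geometry on $H^r$. However, there is a genuine gap in the calibration step, and it is precisely the difficulty that drives the paper to the multi-scale decomposition in Proposition~\ref{prop_decom} and Lemma~\ref{l:coarse}.

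\smallskip

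\textbf{The single-scale clustering does not close.} When you chain at a single scale $R$, two indices in the same cluster can be at distance up to $(|C|-1)R$, while two indices in different clusters are guaranteed only to be at distance $>R$. Writing $\prod_{i\in C} h_i\cdot\phi = h_C^\ast\cdot\Psi_C$, property $(\mathrm{N}_3)$ gives $S_{\ell}(\Psi_C)\ll e^{\sigma_\ell (|C|-1) R}$, and the error in the factorisation coming from \eqref{eq:mix_end} is therefore of size $e^{\sigma_\ell(r-|\mathcal{P}|)R}\,e^{-\delta_{|\mathcal{P}|} R}$. The sign of the exponent $\sigma_\ell(r-|\mathcal{P}|)-\delta_{|\mathcal{P}|}$ is not under control: there is no reason for the mixing rates $\delta_m$ (which typically \emph{decrease} in $m$) to beat the Sobolev growth rate $\sigma_\ell$ multiplied by up to $r-1$. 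Choosing $C$ larger in $R=C\log t$ does not help, because it does not change the sign of this exponent --- it only rescales both the good and the bad part. Your claim that ``$e^{-\delta_r R}$ dominates the exponential-in-$R$ Sobolev growth'' for $C$ large is exactly the step that fails.

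\smallskip

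The paper's fix is the cascade of scales $0=\beta_0<\beta_1<3\beta_1\le\beta_2<\cdots$ in Proposition~\ref{prop_decom}: each tuple lands in some $\Delta_Q(3\beta_j,\beta_{j+1})$ with cluster spread $\le 3\beta_j$ but separation $>\beta_{j+1}$, and $\beta_{j+1}$ is chosen in the final optimisation to satisfy $\beta_{j+1}\ge\delta_r^{-1}(\theta + 3\sigma_r\beta_j)$, so that the separation genuinely dominates the Sobolev-growth penalty at the next level. This recursion (and not a single $R$) is what makes the estimate $\ll e^{-\theta}$ uniform. Your approach is missing this device. A secondary remark: the right scale is $\theta\sim\log\vol(B_t)$, not $\log t$; for $H$ of intermediate growth these are not comparable.

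\smallskip

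\textbf{Moments versus cumulants.} You prove convergence of moments directly; the paper works with cumulants via Proposition~\ref{p:cum}. The cumulant route has a concrete advantage here: the identity $\cum_r(\phi_1,\ldots,\phi_r\mid Q)=0$ for nontrivial partitions $Q$ (used in Proposition~\ref{p:bound}) kills the ``main term'' of the cluster factorisation for free, so one only has to estimate the factorisation error. In the moment approach you must additionally bookkeep the main term contributions from singletons (which you correctly kill with $\mu(\phi)=0$), from blocks of size $\ge 3$ (which you control by volume), and from pair partitions (which must produce exactly $(2k-1)!!\,\sigma^{2k}$); this is doable but strictly more work, and each of those estimates also requires the multi-scale control to be carried out rigorously. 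So the cumulant framework is not just cosmetic --- it is what lets the clustering error be the only quantity that needs a hard estimate.

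\smallskip

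In summary, the skeleton of the argument is right, but as written the proof does not go through: the single cutoff $R$ cannot reconcile the exponential growth of $S_\ell(\Psi_C)$ with the exponential mixing decay, and the decomposition of $H^r$ into iteratively coarsened cluster regions (Lemma~\ref{l:coarse} and Proposition~\ref{prop_decom}) is not a technical convenience but the key missing idea.
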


We remark that the condition that $H$ has subexponential growth is important in Theorem \ref{th:clt}.
Indeed, Gorodnik and Ramirez \cite{gor_r} constructed examples of actions of rank-one simple Lie groups
on homogeneous spaces which are exponentially mixing of all orders, but do not satisfy
the Central Limit Theoorem.

In particular, Theorem \ref{th:clt} immediately implies the following 
results about higher-rank abelian actions on homogeneous spaces.

\begin{Cor}\label{cor:cartan}
Let $G$ be a (noncompact) connected simple matrix Lie group with finite centre
and $H$ a (noncompact) closed subgroup of a Cartan subgroup of $G$. 
Then a measure-preserving action of $H$ on finite-volume homogeneous spaces $X$ of $G$
satisfies the Central Limit Theorem. Namely, for every $\phi\in C_c^\infty(X)$
with zero integral, the family of functions
$$
F_t=\hbox{\rm vol}(B_t)^{-1/2} \int_{B_t} (h\cdot \phi)\,dh
$$
converges in distribution to the Normal Law as $t\to\infty$.
\end{Cor}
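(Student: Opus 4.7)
The plan is to obtain Corollary \ref{cor:cartan} as a direct application of Theorem \ref{th:clt} to the action of $H$ on $X$; the entire task reduces to checking that the hypotheses of Theorem \ref{th:clt} are satisfied in this setting. We equip $H$ with the restriction of a left-invariant Riemannian metric $d_G$ on $G$, take $\mathcal{A}=C_c^\infty(X)$, and use the standard Sobolev norms $S_\ell$ defined as in \eqref{eq:sobolev}.

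First I would verify the structural hypotheses. Since $H$ is a closed subgroup of a Cartan subgroup of $G$, it is abelian, so topologically $H\cong \R^a\times K$ for some compact $K$ and some $a\ge 1$. The balls $B_t=\{h\in H:d(h,e)<t\}$ then grow polynomially in $t$, so the subexponential growth condition \eqref{eq:subexp} holds trivially. Properties $(\mathrm{N}_1)$ and $(\mathrm{N}_4)$ for the Sobolev norms on $C_c^\infty(X)$ are standard and were already used in Section \ref{sec:cor_gen}. Property $(\mathrm{N}_3)$, in the exponential form required for Theorem \ref{th:clt}, follows from the corresponding polynomial bound in $\|g\|$ combined with the equivalence $\|g\|\asymp e^{d_G(g,e)}$ recorded at the end of Section \ref{sec:cor_2}.

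Next I would verify the quantitative higher-order mixing assumption \eqref{eq:mix_end}. The hypothesis \eqref{eq:m} on order-two correlations for the $G$-action on $X$ holds by Theorem \ref{th:high-rank3} (using that the action of $G$ on any finite-volume homogeneous space is ergodic by a standard Fubini argument, so $L^2_0(X)$ has no $G$-invariant vectors). Theorem \ref{th:cor_high} then yields, for every $r\ge 2$, an estimate of the form
$$
\mu((g_1\cdot\phi_1)\cdots (g_r\cdot\phi_r)) = \mu(\phi_1)\cdots\mu(\phi_r) + O_r\bigl(S_{\ell_r}(\phi_1)\cdots S_{\ell_r}(\phi_r)\, N(g_1,\ldots,g_r)^{-\delta_r}\bigr)
$$
for all $g_1,\ldots,g_r\in G$. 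Restricting the $g_i$ to lie in $H$ and using $N(g_1,\ldots,g_r)^{-\delta_r}\ll e^{-\delta_r' D(g_1,\ldots,g_r)}$ for some $\delta_r'>0$ (again by the equivalence of matrix norm and Riemannian distance), we obtain exactly \eqref{eq:mix_end} for the $H$-action with respect to the induced metric on $H$.

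With these ingredients, Theorem \ref{th:clt} applies directly and yields convergence of $F_t$ to the Normal Law; absolute convergence of the variance $\sigma(\phi)^2=\int_H \langle h\cdot\phi,\phi\rangle\,dh$ is automatic, since Theorem \ref{th:high-rank3} gives exponential decay of $\langle h\cdot\phi,\phi\rangle$ in $d_G(h,e)$ while $H$ has polynomial volume growth. The principal point of care, rather than a genuine obstacle, is keeping the metric conventions consistent: the exponents $\delta_r$ coming from Theorem \ref{th:cor_high} are stated in terms of the ambient matrix norm on $G$, and one must translate them into exponential decay in the intrinsic metric $d$ on $H$ in which Theorem \ref{th:clt} is phrased.
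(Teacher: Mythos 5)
Your proposal is correct and takes exactly the route the paper intends: the paper states that Corollary~\ref{cor:cartan} ``immediately'' follows from Theorem~\ref{th:clt}, and you supply the routine verifications — subexponential growth of closed abelian (hence polynomial-growth) subgroups of a Cartan subgroup, the Sobolev-norm axioms (${\rm N}_1$), (${\rm N}_3$), (${\rm N}_4$), and quantitative multiple mixing via Theorem~\ref{th:high-rank3} (spectral gap on $L^2_0(X)$) feeding into Theorem~\ref{th:cor_high}, with the translation $\|g\|\asymp e^{d(g,e)}$ converting the exponent. One small imprecision worth flagging: a closed noncompact subgroup of a Cartan subgroup need not be of the form $\R^a\times K$ with $a\ge 1$ — it may contain a nontrivial discrete factor $\Z^q$ (e.g.\ $H\cong\Z$), but this does not affect the argument since the volume growth remains polynomial and hence subexponential.
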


We note that when $M$ is compact surface with constant negative curvature,
its unit tangent bundle can be realised as
$$
T^1(M)\simeq \hbox{PSL}_2(\R)/\Gamma,
$$
where $\Gamma$ is a discrete cocompact subgroup of $\hbox{PSL}_2(\R)$, and 
the geodesic flow is given by
$$
g_t:x\mapsto 
\left(
\begin{tabular}{cc}
$e^{t/2}$ & 0 \\
0 & $e^{-t/2}$
\end{tabular}
\right)
x\quad \hbox{for $x\in \hbox{PSL}_2(\R)/\Gamma.$}
$$
Hence, our method also provides a new proof of Theorem \ref{th:sinai}.

\medskip

It is well-known from Probability that in order to establish that
a family of bounded random variables $X_t$ converges in distribution
to a normal random variable $N$, it is sufficient to establish convergence of 
all moments, that is, that for all $r\ge 1$
$$
\mathbb{E}(X_t^r)\to \mathbb{E}(N^r) \quad\hbox{as $t\to\infty$.}
$$
We essentially follows this route, but it will be more convenient 
to work with cumulants instead of moments.
Given random variables $X_1,\ldots, X_r$,
the joint {\it cumulant} is defined as
$$
\cum(X_1,\ldots,X_r)=(-i)^r\frac{\partial^r}{\partial z_1\cdots \partial z_r}
\log \mathbb{E} \left[ e^{i\sum_{k=1}^r z_k X_k} \right] \Big|_{z_1=\cdots=z_r=0}.
$$
It is useful to keep in mind that the joint cumulants can be expressed 
in terms of joint moments and conversely (see, for instance, \cite{ls}):
\begin{align*}
\cum(X_1,\ldots,X_r)&=
\sum_{P\in\mathcal{P}_r} (-1)^{|P|-1} (|P|-1)!\,{\prod}_{I\in P} \mathbb{E}\left({\prod}_{i\in I }X_i\right),\\
\mathbb{E}(X_1\cdots X_r)&=\sum_{P\in \mathcal{P}_r } {\prod}_{I\in P} \cum(X_i:\, i\in I),
\end{align*}
where the sums are taken over the set $\mathcal{P}_r$ consisting of all partitions of $\{1,\ldots,r\}$.
Hence, studying cumulants is essentially equivalent to studying moments.
However, it turns our that cumulants have several very convenient 
additional vanishing properties that will be crucial for our argument:
\begin{itemize}
\item If there exists a nontrivial partition $\{1,\ldots,r\}=I\sqcup J$
such that $\{X_i:i\in I\}$ and $\{X_i:i\in J\}$ are independent, then
\begin{equation}
\label{eq:indep}
\cum(X_1,\ldots,X_r)=0.
\end{equation}
 \item if $N$ is a normal random variable, than the cumulants of order at least three satisfy
 $$
 \cum(N,\ldots,N)=0
 $$
\end{itemize}

\medskip

Now we adopt this probabilistic notation to our setting.
For functions $\phi_1,\ldots,\phi_r\in L^\infty(X)$ and a subset $I\subset \{1,\ldots, r\}$,
we set 
$$
\phi_I={\prod}_{i\in I} \phi_i.
$$
We use the convention that $\phi_\emptyset=1$.
Then we define the joint {\it cumulant} of $\phi_1,\ldots,\phi_r$ as
$$
\cum_r(\phi_1,\ldots,\phi_r)=\sum_{P\in\mathcal{P}_r} (-1)^{|P|-1} (|P|-1)!\,{\prod}_{I\in P} \mu(\phi_I).
$$
For a function $\phi\in L^\infty(X)$, we also set
$$
\cum_r(\phi)=\cum_r(\phi,\ldots,\phi).
$$

The following proposition,
which is essentially equivalent to the more widely known Method of Moments,
provides a convenient criterion for proving the Central Limit Theorem.

\begin{Prop}\label{p:cum}
Let $F_t\in L^\infty(X)$ be a family of functions such that as $t\to\infty$,
\begin{align}
\mu(F_t)&\to 0,	\label{eq:c1}\\
 \|F_t\|_{L^2}&\to \sigma, \label{eq:c2} \\
 \cum_r(F_t)&\to 0\quad\hbox{for all $r\ge 3$.} \label{eq:c3}
\end{align}
Then
for every $\xi\in \mathbb{R}$,
$$
\mu\big(\left\{x\in X:\, F_t(x)<\xi \right\}\big)\longrightarrow \hbox{\rm Norm}_{\sigma}(\xi)
\quad\hbox{as $t\to \infty$.}
$$
\end{Prop}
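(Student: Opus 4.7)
\smallskip

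The plan is to reduce Proposition \ref{p:cum} to the classical Method of Moments: translate the assumed convergence of cumulants into convergence of all moments of $F_t$ to the moments of the Normal Law $\hbox{\rm Norm}_\sigma$, and then invoke the fact that the Gaussian distribution is uniquely determined by its moments to conclude convergence in distribution.

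The main computational step is the conversion identity stated in the excerpt,
$$
\mu(F_t^r)=\sum_{P\in\mathcal{P}_r}\prod_{I\in P}\cum_{|I|}(F_t),
$$
which expresses the $r$-th moment as a sum over all partitions of $\{1,\ldots,r\}$ of the product of joint cumulants over the blocks. I would first record the two low-order identities $\cum_1(F_t)=\mu(F_t)$ and $\cum_2(F_t)=\mu(F_t^2)-\mu(F_t)^2$, which together with the hypotheses \eqref{eq:c1} and \eqref{eq:c2} give $\cum_1(F_t)\to 0$ and $\cum_2(F_t)\to\sigma^2$. Combined with the vanishing hypothesis \eqref{eq:c3} for $r\ge 3$ and uniform boundedness of the cumulants (implied by $F_t\in L^\infty(X)$ with controlled $L^2$-norms), every partition $P$ that contains a block of size $1$ or of size $\ge 3$ contributes $o(1)$ to the sum. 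The surviving contributions come only from the pair partitions of $\{1,\ldots,r\}$, of which there are none when $r$ is odd and exactly $(r-1)!!$ when $r$ is even, each contributing $(\sigma^2)^{r/2}$. Hence
$$
\mu(F_t^r)\longrightarrow
\begin{cases} 0 & \text{$r$ odd,}\\ (r-1)!!\,\sigma^r & \text{$r$ even,}\end{cases}
$$
which are precisely the moments of $\hbox{\rm Norm}_\sigma$.

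To pass from moment convergence to distributional convergence, I would argue as follows. The convergence $\|F_t\|_{L^2}\to\sigma$ and the Chebyshev inequality show that the push-forward measures $(F_t)_*\mu$ on $\mathbb{R}$ form a tight family, hence every subsequence has a further subsequence converging weakly to some probability measure $\nu$. Uniform control on all moments (via the cumulant-to-moment identity) and the bounded convergence theorem ensure that the moments of $\nu$ coincide with the Gaussian moments computed above. Since the Normal distribution is determined by its moments -- for instance, by the Carleman criterion applied to $m_{2n}=(2n-1)!!\,\sigma^{2n}$ -- every subsequential limit equals $\hbox{\rm Norm}_\sigma$, so the whole family converges weakly, which is exactly the statement $\mu\{F_t<\xi\}\to \hbox{\rm Norm}_\sigma(\xi)$ at every continuity point $\xi$ (and $\hbox{\rm Norm}_\sigma$ is continuous).

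I expect the main subtlety to be the justification of tightness and uniform moment control rather than the combinatorial identity itself, which is purely algebraic. Once those analytic prerequisites are secured, the argument is essentially bookkeeping: partitions with a singleton block are killed by \eqref{eq:c1}, partitions with a block of size $\ge 3$ are killed by \eqref{eq:c3}, and the remaining pair partitions reproduce the Wick formula for the Gaussian.
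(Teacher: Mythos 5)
The paper states Proposition \ref{p:cum} without proof, noting only that it is ``essentially equivalent to the more widely known Method of Moments''; your proposal is precisely the Method of Moments argument that is being alluded to, and it is correct in substance. The reduction runs exactly as you describe: $\cum_1(F_t)=\mu(F_t)\to 0$, $\cum_2(F_t)=\mu(F_t^2)-\mu(F_t)^2\to\sigma^2$, and $\cum_r(F_t)\to 0$ for $r\ge 3$ by hypothesis; in the cumulant-to-moment expansion over partitions, any block of size $1$ or $\ge 3$ contributes a factor tending to $0$, the remaining factors stay bounded, so only pair partitions survive, yielding the Wick/Isserlis moments of $\hbox{\rm Norm}_\sigma$; tightness plus Carleman then closes the argument.

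Two small points worth tightening. First, the boundedness of the ``other factors'' in a partition does not come from $F_t\in L^\infty(X)$ (the essential sup can grow with $t$); it comes directly from the hypotheses \eqref{eq:c1}--\eqref{eq:c3}, since each sequence $\cum_k(F_t)$ converges and is therefore bounded in $t$, which in turn (via the cumulant-to-moment formula) also bounds every moment $\mu(F_t^k)$ uniformly in $t$. Second, upgrading weak subsequential convergence to moment convergence is not an application of the bounded convergence theorem (the variables $F_t^r$ are not uniformly bounded); what you need is uniform integrability of $\{F_t^r\}_t$, which follows from the uniform bound on $\mu(F_t^{2r})$ just mentioned. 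With those two clarifications the argument is complete.
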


Estimates on cumulants were also used by Cohen and Conze \cite{CC1,CC2,CC3} to prove the Central Limit Theorem for $\Z^k$-actions by automorphisms of compact abelian groups. 

\medskip

We begin the proof of Theorem \ref{th:clt}.
In view of Proposition \ref{p:cum}, it remains to verify that the family of functions $F_t$ defined in \eqref{eq:f_t}
satisfies \eqref{eq:c1}, \eqref{eq:c2}, and \eqref{eq:c3}. The first condition
is immediate, and the second is verified as follows. 
We observe that 
\begin{align*}
\|F_t\|_{L^2}^2 &= \hbox{vol}(B_t)^{-1}\int_{B_t\times B_t} \left<h_1\cdot \phi,h_2\cdot \phi\right>\, dh_1dh_2\\
&=\hbox{vol}(B_t)^{-1}\int_{H\times H} \chi_{B_t}(h_1)\chi_{B_t}(h_2)\left<(h_1^{-1}h_2)\cdot \phi, \phi\right>\, dh_1dh_2\\
&=\int_{H} \frac{\hbox{vol}(B_t\cap B_t h^{-1})}{\vol(B_t)}
\left<h\cdot \phi, \phi\right>\, dh.
\end{align*}
It is not hard to check using the subexponential growth property \eqref{eq:subexp} that 
the balls $B_t$ satisfy the F\o lner property, that is, for all $h\in H$,
$$
\frac{\hbox{vol}(B_t\cap B_t h^{-1})}{\vol(B_t)}\to 1\quad\hbox{as $t\to\infty$}.
$$
Moreover, it follows from \eqref{eq:mix_end} with $r=2$ that
the function $h\mapsto \left<h\cdot \phi, \phi\right>$ is in $L^1(H)$.
Thus, using the Dominated Convergence Theorem, we deduce that
$$
\|F_t\|_{L^2}^2\to \int_H \left<h\cdot \phi, \phi\right>\, dh\quad\hbox{as $t\to\infty$.}
$$
This implies \eqref{eq:c2}.

\medskip

Verification of \eqref{eq:c3} is the most challenging part of the proof
because  it requires to show asymptotic vanishing of the cumulants
$$
\cum_r(F_t)=\hbox{vol}(B_t)^{-r/2}\int_{B_t^r} \cum_r(h_1\cdot \phi,\ldots,h_r\cdot \phi)\, d{h},
$$
which is even more than the square-root cancellation in this integral. 
The first crucial input for estimating $\cum_r(F_t)$ comes from the bound
on correlations \eqref{eq:mix_end}. However, these bound will be only useful
for certain ranges of tuples $h=(h_1,\ldots,h_r)$. 

To utilise the bound \eqref{eq:mix_end} most efficiently,
we introduce a decomposition of the product $H^r$ into a union
of domains where the components $h_i$ are either separated or clustered
on suitable scales. For subsets $I,J\subset \{1,\ldots,r\}$ and $h=(h_1,\ldots,h_r)\in H^r$,
we set
\begin{align*}
d^I(h)&=\max\{d(h_i,h_j):\, i,j\in I\}\\
d_{I,J}(h)&=\min\{d(h_i,h_j):\, i\in I, j\in J\},
\end{align*}
and for a partition $Q\in\mathcal{P}_r$, we set
\begin{align*}
d^Q(h)&=\max\{ d^I(h):\, I\in Q\},\\
d_Q(h)&=\min\{d_{I,J}(h):\, I\ne J\in Q\}.
\end{align*}
Using this notation, we define for $0\le \alpha\le\beta$,
\begin{align*}
\Delta_Q(\alpha,\beta)&=\{h\in H^r:\, d^Q(h)\le \alpha,\ d_Q(h)>\beta \},\\
\Delta(\beta)&=\{h\in H^r:\, d(h_i,h_j)\le \beta \quad\hbox{for all $i,j$}\}.
\end{align*}
For $h=(h_1,\ldots,h_r)\in \Delta_Q(\alpha,\beta)$, we think about components
$h_i$ with $i$ in the same atom of $Q$ as ``clustered'' and about 
$h_i$ with $i$ in different atoms of $Q$ as ``separated'' (see Figure \ref{f:tuples}).
\begin{figure}[h]
	\includegraphics[width=0.7\linewidth]{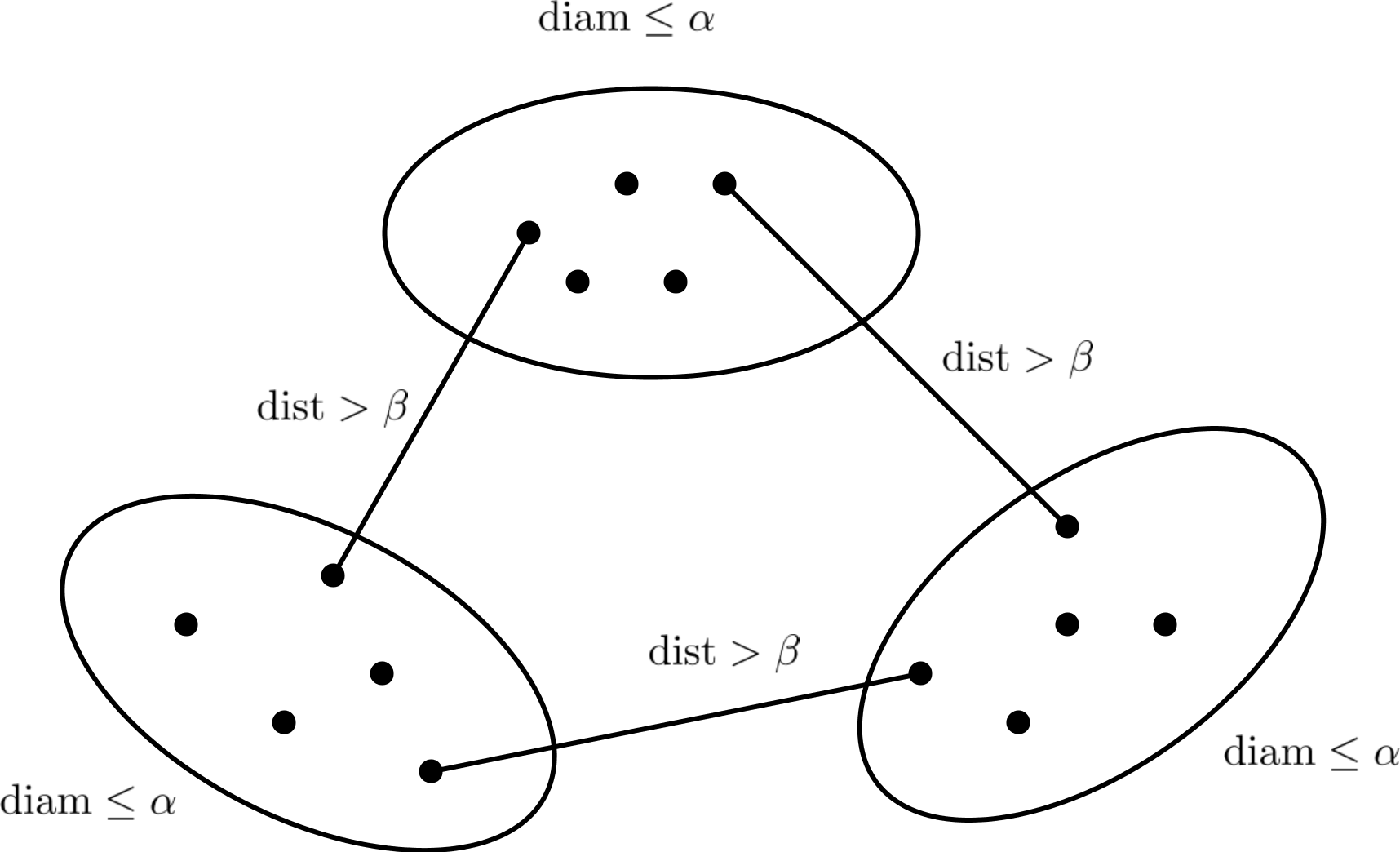}
	\caption{Tuples in the sets $\Delta(\alpha,\beta)$.}\label{f:tuples}
\end{figure}

These features allow to estimate $\cum_r(h_1\cdot \phi,\ldots,h_r\cdot \phi)$
on the sets $\Delta_Q(\alpha,\beta)$:

\begin{Prop}\label{p:bound}
There exist $\delta_r,\sigma_r>0$ such that for every 
$0\le\alpha\le \beta$, $Q\in\mathcal{P}_r$ with $|Q|\ge 2$, and 
$(h_1,\ldots,h_r)\in \Delta_Q(\alpha,\beta)$,
$$
\cum_r(h_1\cdot \phi,\ldots,h_r\cdot \phi)\ll_{r,\phi} e^{-\delta_r \beta-\sigma_r\alpha}.
$$
\end{Prop}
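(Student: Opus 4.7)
The plan is to expand the cumulant using its combinatorial definition
\begin{equation*}
\cum_r(h_1\cdot \phi, \ldots, h_r\cdot \phi) = \sum_{P \in \mathcal{P}_r} (-1)^{|P|-1}(|P|-1)!\,\prod_{I \in P} \mu\Big(\prod_{i \in I} h_i\cdot \phi\Big),
\end{equation*}
and apply the higher-order mixing bound \eqref{eq:mix_end} to each factor in a way tailored to the clustering structure encoded by $Q$. For each atom $I \in P$, regroup indices according to the restriction $Q|_I = \{J \cap I : J \in Q,\ J \cap I \ne \emptyset\}$, pick a representative $i_J \in J$ for each $J \in Q|_I$, and rewrite
\begin{equation*}
\prod_{i \in I} h_i \cdot \phi \;=\; \prod_{J \in Q|_I} h_{i_J} \cdot \psi_J^I,\qquad \psi_J^I := \prod_{i \in J}(h_{i_J}^{-1} h_i) \cdot \phi.
\end{equation*}
When $|Q|_I|\ge 2$ the representatives $h_{i_J}$ lie in distinct atoms of $Q$, so they are pairwise $\beta$-separated, while each $\psi_J^I$ is a product of at most $r$ translates of $\phi$ by group elements of norm at most $\alpha$.

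Applying \eqref{eq:mix_end} together with $\mu$-invariance (so that $\mu(h_{i_J}\cdot \psi_J^I) = \mu(\prod_{i\in J} h_i\cdot\phi) =: m(J)$) yields
\begin{equation*}
\mu\Big(\prod_{i\in I} h_i\cdot\phi\Big) \;=\; \prod_{J \in Q|_I} m(J) \;+\; O_{r,\phi}\big(e^{-\delta \beta}\, e^{\sigma \alpha}\big),
\end{equation*}
where the error is obtained by iterating $({\rm N}_4)$ to factor $S_\ell(\psi_J^I)$ into norms of single translates and then applying $({\rm N}_3)$ with $d(h_{i_J}^{-1}h_i, e) \le \alpha$. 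When $|Q|_I|=1$ the identity is exact. Multiplying over $I \in P$, controlling spare factors by $|m(J)| \le \|\phi\|_\infty^{|J|}$, and summing over $P \in \mathcal{P}_r$ produces
\begin{equation*}
\cum_r(h_1\cdot \phi, \ldots, h_r\cdot \phi) \;=\; M \;+\; O_{r,\phi}\big(e^{-\delta \beta}\, e^{\sigma \alpha}\big),
\end{equation*}
where $M := \sum_{P \in \mathcal{P}_r} (-1)^{|P|-1}(|P|-1)!\,\prod_{I \in P} \prod_{J \in Q|_I} m(J)$.

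The essential step is to show that $M=0$, and I would argue probabilistically: $M$ is the joint cumulant $\cum_r(\tilde Y_1,\ldots,\tilde Y_r)$ of an auxiliary family $(\tilde Y_i)$ whose restriction to each atom of $Q$ has the same joint distribution as $(Y_i := h_i\cdot \phi)$, but whose blocks across distinct atoms of $Q$ are jointly independent. For such a family $\mathbb{E}(\prod_{i \in I}\tilde Y_i) = \prod_{J \in Q|_I} m(J)$ for every $I \subset \{1,\ldots,r\}$, so inserting this into the moment-to-cumulant formula reproduces $M$. Since $|Q| \ge 2$, the $\tilde Y_i$ split into at least two jointly independent subfamilies, and the classical vanishing property \eqref{eq:indep} gives $M=0$. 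Setting $\delta_r$ to be the minimum over $2 \le s \le r$ of the $\delta_s$ in \eqref{eq:mix_end} and $\sigma_r$ large enough to absorb the exponents coming from $({\rm N}_3)$--$({\rm N}_4)$ then closes the estimate.

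The main obstacle is bookkeeping: one needs both to rigorously establish the combinatorial identity $M = \cum_r(\tilde Y_1,\ldots,\tilde Y_r)$ (which ultimately rests on the M\"obius-inversion formulation of cumulants over the partition lattice) and to track carefully how Sobolev norms inflate under the regrouping, so that a single universal inflation factor $e^{\sigma_r \alpha}$ suffices uniformly in $P$, $I$, $J$ and in the choice of representatives $i_J$.
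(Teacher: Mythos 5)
Your proposal follows essentially the same route as the paper: expand the cumulant combinatorially, regroup the indices within each atom $I$ of a partition $P$ according to the $Q$-blocks, pick a representative per block, apply the multiple-mixing bound~\eqref{eq:mix_end} to get the main term $\prod_{J\in Q|_I} m(J)$ plus an error controlled via~$({\rm N}_3)$,~$({\rm N}_4)$, and then observe that the main terms assemble into the conditional cumulant $\cum_r(\,\cdot\,|Q)$, which vanishes because $Q$ is nontrivial. The one point where you depart from the paper's exposition is the vanishing $M=\cum_r(\,\cdot\,|Q)=0$: the paper simply cites~\cite{bg} for this combinatorial cancellation, whereas you give a clean self-contained probabilistic argument by realizing $M$ as the joint cumulant of an auxiliary family $(\tilde Y_i)$ living on a product space that reproduces the marginal distributions within each $Q$-block but makes distinct blocks jointly independent, so~\eqref{eq:indep} applies directly since $|Q|\ge 2$. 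This is a legitimate and arguably more transparent proof of the cited identity, and the rest of the bookkeeping (uniformity of $\delta_r,\sigma_r$ over $P$, $I$, $J$, and representatives, bounding spare factors by $\|\phi\|_\infty$) is handled correctly.

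One small remark on signs: the exponent you actually derive is $e^{-\delta_r\beta+\sigma_r\alpha}$, i.e.\ the error degrades as $\alpha$ grows; this matches the displayed estimate in the paper's proof ($e^{-(\delta_r\beta-\sigma_r\alpha)}$) and the way the proposition is subsequently used (the recursion $\beta_{j+1}\ge\delta_r^{-1}(\theta+3\sigma_r\beta_j)$ forces $\delta_r\beta_{j+1}-\sigma_r\cdot 3\beta_j\ge\theta$), so the minus sign in front of $\sigma_r\alpha$ in the proposition's statement should be read as a typographical slip; your derivation has the correct sign.
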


\begin{proof}
The proof will exploit a certain cancellation property of cumulants.
For $Q\in \mathcal{P}_r$ and $\phi_1,\ldots,\phi_r\in L^\infty(X)$, 
we define the conditional cumulant as 
$$
\cum_r(\phi_1,\ldots,\phi_r|Q)=\sum_{P\in\mathcal{P}_r} (-1)^{|P|-1} (|P|-1)!\prod_{I\in P}
\prod_{J\in Q} \mu(\phi_{I\cap J}).
$$
 One can show that 
when the partition $Q$ is nontrivial,
\begin{equation}
\label{eq:cum_cond}
\cum_r(\phi_1,\ldots,\phi_r|Q)=0.
\end{equation}
This fact is an analogue of \eqref{eq:indep}, but
it is not a probabilistic property, but rather a combinatorial
cancellation feature of the cumulant sums, and we refer, for instance, to \cite{bg}
for a self-contained proof of \eqref{eq:cum_cond}.

In order to bound $\cum_r(h_1\cdot \phi,\ldots,h_r\cdot \phi)$,
we shall show that when $(h_1,\ldots,h_r)\in \Delta_Q(\alpha,\beta)$,
$$
\cum_r(h_1\cdot \phi,\ldots,h_r\cdot \phi)\approx \cum_r(h_1\cdot \phi,\ldots,h_r\cdot \phi|Q)
$$
which reduces to verifying that for $I\in P$,
$$
\mu\left({\prod}_{i\in I} h_i\cdot \phi \right)\approx 
{\prod}_{J\in Q} \mu\left({\prod}_{i\in I\cap J} h_i\cdot \phi \right).
$$
This is where the full strength of the estimate \eqref{eq:mix_end} on higher-order correlations comes into play.
For each $J$, we pick $h_J$ as one of $h_j$, $j\in I\cap J$. Then
$$
\mu\left({\prod}_{i\in I} h_i\cdot \phi_i \right)=
\mu\left({\prod}_{J\in  Q} h_{J} \Phi_J \right),
$$
where $\Phi_J=\prod_{i\in I\cap J} (h_J^{-1}h_i)\cdot \phi$.
Since $(h_1,\ldots,h_r)\in \Delta_Q(\alpha,\beta)$, we have
\begin{align*}
&d(h_J^{-1}h_i,e)=d(h_i,h_J)\le \alpha  & \hbox{for $i\in J\in Q$},\\
&d(h_{J_1},h_{J_2})>\beta & \hbox{for $J_1\ne J_2\in Q$.}
\end{align*}
Hence, it follows from \eqref{eq:mix_end} that 
$$
\mu\left({\prod}_{J\in  Q} h_{J} \Phi_J \right) 
={\prod}_{J\in Q} \mu(\Phi_J)+O_r\left({\prod}_{J\in Q} S_{\ell_r}(\Phi_J) e^{-\delta_r\beta}\right),
$$
and by the properties (${\rm N}_3$) and (${\rm N}_4$) of the norms,
$$
S_{\ell}(\Phi_J)\ll_\ell {\prod}_{i\in I\cap J} S_{\ell'}((h_J^{-1}h_i)\cdot \phi)
\ll_{\ell',\phi} e^{r\sigma_\ell \alpha}.
$$
This implies that for some $\sigma_r>0$,
$$
\mu\left({\prod}_{i\in I} h_i\cdot \phi_i \right)=
{\prod}_{J\in Q} \mu\left({\prod}_{i\in I\cap J} h_i\cdot \phi_i \right)
+O_{r,\phi}\left( e^{-(\delta_r \beta-\sigma_r\alpha)} \right).
$$
which can be used to prove the proposition.
\end{proof}

We shall use the following decomposition of the space of tuples $H^r$.

\begin{Prop}\label{prop_decom}
	Given parameters 
	$$
	0=\beta_0<\beta_1< 3\beta_1\le\beta_2<\cdots <\beta_{r-1}<3\beta_{r-1}\le\beta_r,
	$$
	we  have the decomposition
	$$
	H^r=\Delta(\beta_r)\cup \left( \bigcup_{j=0}^{r-1} \bigcup_{Q:\,|Q|\ge 2} \Delta_Q(3\beta_j,\beta_{j+1}) \right).
	$$
\end{Prop}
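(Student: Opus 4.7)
The plan is to identify, for each $h=(h_1,\ldots,h_r)$ not lying in $\Delta(\beta_r)$, a scale index $j$ and a partition $Q$ with $|Q|\ge 2$ witnessing that $h\in \Delta_Q(3\beta_j,\beta_{j+1})$. The natural device is the family of chain partitions $Q(\beta)$ of $\{1,\ldots,r\}$, where $i$ and $i'$ lie in the same atom of $Q(\beta)$ iff they are joined by a chain $i=i_0,i_1,\ldots,i_m=i'$ with $d(h_{i_{k-1}},h_{i_k})\le \beta$ for each $k$. As $\beta$ grows from $0$ to $\infty$, the partition $Q(\beta)$ only coarsens, and its jumps occur precisely at the $r-1$ edge-weights of a minimum spanning tree of the weighted complete graph on $\{1,\ldots,r\}$.

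First I would apply a pigeonhole on the scales: among the $r+1$ partitions $Q(\beta_0),\ldots,Q(\beta_r)$ there are at most $r$ distinct sizes in $\{1,\ldots,r\}$, so some consecutive pair must coincide, say $Q(\beta_j)=Q(\beta_{j+1})$ for some $j\in\{0,\ldots,r-1\}$. Since $\beta_j\le 3\beta_j\le\beta_{j+1}$, the common partition $Q:=Q(\beta_j)=Q(\beta_{j+1})$ also equals $Q(3\beta_j)$, equivalently no spanning-tree edge lies in $(\beta_j,\beta_{j+1}]$. The separation condition $d_Q(h)>\beta_{j+1}$ is then immediate: two indices in distinct atoms of $Q=Q(\beta_{j+1})$ cannot be joined by a chain of edges of length $\le \beta_{j+1}$, hence their direct distance exceeds $\beta_{j+1}$. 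For the diameter, any two indices $i,i'$ in a common atom $I$ of $Q=Q(\beta_j)$ are linked by a chain of at most $|I|-1$ edges each of length $\le\beta_j$, yielding the crude bound $d(h_i,h_{i'})\le (|I|-1)\beta_j$; this already suffices to give $d^Q(h)\le 3\beta_j$ whenever every atom of $Q$ has at most four elements.

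To handle larger atoms and the degenerate case $|Q|=1$, I plan to iterate the pigeonhole on the restriction of $h$ to each large atom with an appropriately shifted sub-sequence of scales, exploiting the geometric growth $\beta_{j+1}\ge 3\beta_j$ to show that either the iteration terminates with a valid pair $(j,Q)$ satisfying both the diameter and separation bounds, or else the entire tuple is absorbed into $\Delta(\beta_r)$ via an estimate of the shape $(r-1)\beta_j\le \beta_r$ (which follows from $\beta_r\ge 3^{r-j}\beta_j$ for sufficiently small $j$). The main obstacle is precisely this diameter bound for atoms of size $\ge 5$, where the crude chain estimate $(|I|-1)\beta_j$ is weaker than $3\beta_j$: the refined argument must thread the dichotomy between (a) the atom being tight enough that an inner split at a smaller scale works, and (b) the atom being spread out enough that the whole configuration already lies in $\Delta(\beta_r)$. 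Making this dichotomy precise, and verifying that the factor of $3$ in the scale growth is the right constant, is the technical heart of the proof.
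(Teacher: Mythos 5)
Your proposal correctly captures the high-level idea — a pigeonhole over a nested family of partitions at the $r+1$ scales — and the separation estimate $d_Q(h) > \beta_{j+1}$ for $Q = Q(\beta_{j+1})$ is fine. But the diameter estimate is a genuine gap, and you identify it yourself: chain connectivity at scale $\beta_j$ only yields $d^Q(h) \le (|I|-1)\beta_j$, which fails to be $\le 3\beta_j$ as soon as an atom has five or more elements, and your pigeonhole hands you a specific collision index $j$ rather than one you are free to choose. You also correctly note that the crude absorption bound $(r-1)\beta_j \le \beta_r$ fails for $j$ near $r$ in the degenerate case $|Q|=1$. The recursion you sketch on large atoms is not carried out, and it is far from clear it can be closed with the constant $3$: the chain partition $Q(\beta)$ is forced on you by the data, so if an atom collapses all at once at a single scale (for instance, $h_1,\ldots,h_5$ colinear with consecutive gaps just below $\beta_1$, plus a far-off point), there is no finer chain partition at a smaller scale that splits it, and no ``shifted sub-sequence of scales'' to pigeonhole on. As written, the proposal is a plan with a hole exactly where the main estimate should be, not a proof.

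The paper avoids chain connectivity altogether. Lemma~\ref{l:coarse} builds a nested sequence $Q_0 \prec Q_1 \prec \cdots$ by merging, at the $k$-th step, only a pair (or a matching of pairs) of atoms of $Q_{k-1}$ at mutual distance $\le \beta_k$, so every atom of $Q_k$ is a union of at most two old atoms and has diameter at most $2\alpha + \beta \le 3\beta_k$, where $\alpha = d^{Q_{k-1}}(h) \le 3\beta_{k-1}\le\beta_k$; the geometric growth $3\beta_{k-1}\le\beta_k$ is precisely what propagates the invariant $d^{Q_k}(h)\le 3\beta_k$. Since $|Q_k|$ strictly decreases, the process stops after at most $r-1$ merges, one per scale: either $d_{Q_k}(h)>\beta_{k+1}$ at some step, giving $h\in\Delta_{Q_k}(3\beta_k,\beta_{k+1})$ with $|Q_k|\ge 2$, or $Q_k$ becomes trivial with $d^{Q_k}(h)\le 3\beta_k\le 3\beta_{r-1}\le\beta_r$, giving $h\in\Delta(\beta_r)$. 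The decisive difference from your approach is the deliberate, incremental merging — the paper refuses to swallow a whole connected component at one scale — and that is what keeps the factor $3$ intact with no case analysis on atom sizes.
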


The proof of Proposition \ref{prop_decom} uses the following lemma:

\begin{Lemma}\label{l:coarse}
Let $Q\in \mathcal{P}_r$ with $|Q|\ge 2$ and $0\le\alpha\le \beta$.
Suppose that for $h\in H^r$,
$$
d^Q(h)\le \alpha\quad\hbox{and}\quad d_Q(h)\le \beta.
$$
Then there exists a partition $Q_1$ which is strictly coarser than $Q$ such that 
$$
d^{Q_1}(h)\le 3\beta.
$$
\end{Lemma}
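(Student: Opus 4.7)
The plan is to construct $Q_1$ by merging two atoms of $Q$ that contain a pair of points realising (or nearly realising) the inter-cluster minimum $d_Q(h)$. Concretely, since $d_Q(h) \leq \beta$, there exist distinct atoms $I, J \in Q$ and indices $i^* \in I$, $j^* \in J$ with $d(h_{i^*}, h_{j^*}) \leq \beta$. Define $Q_1$ to be the partition obtained from $Q$ by replacing the two atoms $I$ and $J$ with their union $I \cup J$, keeping all other atoms of $Q$ unchanged. By construction $Q_1$ is strictly coarser than $Q$ (it has one fewer atom, and $|Q| \geq 2$ guarantees the merge is possible).

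To verify $d^{Q_1}(h) \leq 3\beta$, one must bound $d^{I'}(h)$ for every atom $I' \in Q_1$. For the atoms of $Q_1$ other than $I \cup J$, these are atoms of $Q$ and satisfy $d^{I'}(h) \leq d^Q(h) \leq \alpha \leq \beta \leq 3\beta$, so there is nothing to check. For the merged atom $I \cup J$, I would split into cases depending on whether two indices $k, l$ lie in the same original atom or in different ones. If both lie in $I$ or both in $J$, then $d(h_k, h_l) \leq \alpha \leq \beta \leq 3\beta$. If $k \in I$ and $l \in J$ (or vice versa), the triangle inequality gives
\[
d(h_k, h_l) \leq d(h_k, h_{i^*}) + d(h_{i^*}, h_{j^*}) + d(h_{j^*}, h_l) \leq \alpha + \beta + \alpha \leq 3\beta,
\]
using the assumption $\alpha \leq \beta$ in the last inequality.

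Combining the cases yields $d^{I \cup J}(h) \leq 3\beta$, and hence $d^{Q_1}(h) \leq 3\beta$, as required. This argument is purely combinatorial plus one application of the triangle inequality, so I do not anticipate any genuine obstacle; the only care needed is to note that $|Q| \geq 2$ is precisely what makes the existence of two distinct atoms $I \neq J$ in the definition of $d_Q(h)$ meaningful and makes the merged partition strictly coarser rather than equal to $Q$.
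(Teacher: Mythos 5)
Your proof is correct and follows essentially the same route as the paper: choose two atoms $I,J\in Q$ witnessing $d_Q(h)\le\beta$, merge them, and apply the triangle inequality to get the new diameter bounded by $2\alpha+\beta\le 3\beta$. If anything your formulation is the cleaner one, since it makes explicit that a \emph{single} pair of atoms is merged; the paper's phrase ``combining the sets whose distance at most $\beta$ between them'' could be misread as merging all pairwise-close atoms via transitive closure, which in general would not preserve the $2\alpha+\beta$ diameter bound.
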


\begin{proof}
We observe that the sets $\{h_i:\, i\in I\}$ with $I\in Q$ have diameters at most $\alpha$,
and the distance between at least two of these sets is bounded by $\beta$.
We define the new partition $Q_1$ by combining the sets whose distance at most $\beta$
between them. This gives a strictly coarser partition.
It follows from the triangle inequality that 
the diameters of the sets $\{h_i:\, i\in J\}$ with $J\in Q_1$
are at most $2\alpha+\beta\le 3\beta$. This implies that $d^{Q_1}(h)\le 3\beta$.
\end{proof}

\begin{proof}[Proof of Proposition \ref{prop_decom}]
Let us take arbitrary $h\in H^r$. Suppose that $h\not\in \Delta_{Q_0}(0,\beta_1)$ for 
$Q_0=\{\{1\},\ldots,\{r\}\}$. It is clear that $d^{Q_0}(h)=0$ so that also 
$d_{Q_0}(h)\le \beta_1$. Hence, it follows from Lemma \ref{l:coarse}
that there exists a partition $Q_1$ coarser than $Q_0$ such that 
$d^{Q_1}(h)\le 3\beta_1$. If $d_{Q_1}(h)>\beta_2$, then $h\in \Delta_{Q_1}(3\beta_1,\beta_2)$
and $h$ belongs to the union. On the other, if $d_{Q_1}(h)\le \beta_2$,
we apply Lemma \ref{l:coarse} again to conclude that there exists a partition $Q_2$
coarser than $Q_1$ such that $d^{Q_2}(h)\le 3\beta_2$. 
This argument can be continued, and we deduce that after at most $r$ steps,
we see that $h$ belongs to the union of $\Delta_{Q_j}(3\beta_j,\beta_{j+1})$ with $|Q_j|\ge 2$,
or we get $Q_i=\{\{1,\ldots,r\}\}$ and  $d^{Q_i}(h)\le 3\beta_i<\beta_r$.
In the latter case, we deduce that $h\in\Delta(\beta_r)$. This proves
the required decomposition. 
\end{proof}	

Now we are ready to complete the proof of Theorem \ref{th:clt}.
As we have already remarked, it remains to prove \eqref{eq:c3}.
Using the decomposition established in Proposition \ref{prop_decom}, we deduce that
\begin{align*}
\cum_r(F_t)=&\,\hbox{vol}(B_t)^{-r/2}\int_{B_t^r} \cum_r(h_1\cdot \phi,\ldots,h_r\cdot \phi)\, d{h}\\
\ll_r &\, \hbox{vol}(B_t)^{-r/2}\Big(\hbox{vol}(B_t^r\cap \Delta(\beta_r))\|\phi\|^r_{L^\infty} \\
&\quad\quad\quad\quad\quad\;+\max_{j,\, Q:|Q|\ge 2}
\int_{B_t^r\cap\Delta_Q(3\beta_j,\beta_{j+1})} |\cum_r(h_1\cdot \phi,\ldots,h_r\cdot \phi)|\, dh \Big).
\end{align*}
It follows from invariance of the volume on $H$ that
$$
\hbox{vol}(B_t^r\cap \Delta(\beta_r))
\le  \int_{B_t} \vol(B(h,\beta_r))^{r-1}\, dh=\hbox{vol}(B_t)\vol(B_{\beta_r})^{r-1}.
$$
Hence, using Proposition \ref{p:bound}, we conclude that
$$
\cum_r(F_t)\ll_{r,\phi} \hbox{vol}(B_t)^{1-r/2}\vol(B_{\beta_r})^{r-1}+\hbox{vol}(B_t)^{r/2}\Big(\max_j e^{-\delta_r\beta_{j+1}-3\sigma_r\beta_j}\Big).
$$
For a parameter $\theta>0$, we choose $\beta_j$'s recursively as 
$$
\beta_0=0,\, \beta_{j+1}=\max\{3\beta_j, \delta_r^{-1}(\theta +3\sigma_r \beta_j) \}.
$$
Then $\beta_r\le c_r\theta$ with some $c_r>0$, and 
$$
\cum_r(F_t)\ll_{r,\phi} \hbox{vol}(B_t)^{1-r/2}\vol(B_{c_r\theta})^{r-1}+\hbox{vol}(B_t)^{r/2} e^{-\theta}.
$$
We take $\theta=r\log\hbox{vol}(B_t)$. Then it follows from 
the subexponential growth condition \eqref{eq:subexp} that when $r\ge 3$,
$$
\cum_r(F_t)\to 0\quad \hbox{as $t\to\infty$.}
$$
This completes the proof of Theorem \ref{th:clt}.

\end{document}